\newif\ifPDF
\newtheorem{theorem}{Theorem}[section]
\newtheorem{lemma}[theorem]{Lemma}
\newtheorem{definition}[theorem]{Definition}
\newtheorem{proposition}[theorem]{Proposition} 
\newtheorem{remark}[theorem]{Remark}
\newcommand{\diag}{{\rm diag}}
\newcommand{\eps}{\varepsilon}
\newcommand{\GL}{G_{\Lambda^*_0}}
\newcommand{\tG}{\tilde {G}}
\newcommand{\SH}{\cS_{H_0,\eps}}
\newcommand{\SL}{\cS_{\Lambda^*_0,\eps}}
\newcommand{\tS}{\tilde{\cS}_{\eps}}
\newcommand{\KH}{\cK_{H_0,\eps}}
\newcommand{\KL}{\cK_{\Lambda^*_0,\eps}}
\newcommand{\tK}{\tilde{\cK}_{\eps}}
\newcommand{\sfc}{\mathsf c}
\newcommand{\bbR}{\mathbb R} 
\newcommand{\bbZ}{\mathbb Z} 
\newcommand{\bzero}{{\mathbf 0}}
\newcommand{\bkappa}{{\boldsymbol\kappa}}
\newcommand{\ba}{\mathbf a} \newcommand{\bb}{\mathbf b}
\newcommand{\bc}{\mathbf c} 
\newcommand{\be}{\mathbf e}
\newcommand{\bq}{\mathbf q} \newcommand{\br}{\mathbf r}
 \newcommand{\bx}{\mathbf x} 
\newcommand{\by}{\mathbf y}
\newcommand{\cA}{\mathcal A} \newcommand{\cB}{\mathcal B}
 \newcommand{\cD}{\mathcal D} 
\newcommand{\cE}{\mathcal E} 
\newcommand{\cI}{\mathcal I} 
\newcommand{\cK}{\mathcal K}
\newcommand{\cS}{\mathcal S} \newcommand{\cT}{\mathcal T}
\newcommand{\abs}[1]{\left\vert#1\right\vert}
\newcommand{\norm}[1]{\|#1\|}
\newenvironment{keywords}
{\noindent{\bf Key words}\small}{\par\vspace{1ex}}
\newenvironment{AMS}
{\noindent{\bf AMS subject classifications}\small}{\par}
\newcommand{\chapterauthor}[1]{%
	{\parindent0pt\vspace*{-25pt}%
		\linespread{1.1}\large\scshape#1%
		\par\nobreak\vspace*{35pt}}
	\@afterheading%
}
\title{Dirac points for the honeycomb lattice with impenetrable obstacles}
\author{
Wei Li \thanks{Department of Mathematical Sciences, DePaul University, Chicago, IL 60614. Email: wei.li@depaul.edu.} \; 
Junshan Lin \thanks{\footnotesize Department of Mathematics and Statistics, Auburn University, Auburn, AL 36849. Email: jzl0097@
auburn.edu. Junshan Lin was partially supported by the NSF grant DMS-2011148.}
 \; and Hai Zhang\thanks{\footnotesize 
Department of Mathematics, 
 HKUST,  Clear Water Bay, Kowloon, Hong Kong SAR, China. Email: haizhang@ust.hk. Hai Zhang is partially supported by Hong Kong RGC grant GRF 16305419 and GRF 16304621.}}
 \date{}
\begin{document}
\maketitle

\begin{abstract}
This work is concerned with the Dirac points for the honeycomb lattice with impenetrable obstacles arranged periodically in a homogeneous medium.  We consider both the Dirichlet and Neumann eigenvalue problems  and prove the existence of Dirac points for both eigenvalue problems at crossing of the lower band surfaces as well as higher band surfaces. Furthermore, we perform quantitative analysis for the eigenvalues and the slopes of two conical dispersion surfaces near each Dirac point based on a combination of the layer potential technique and asymptotic analysis. It is shown that the eigenvalues are in the neighborhood of the singular frequencies associated with the Green's function for the honeycomb lattice, and the slopes of the dispersion surfaces are reciprocal to the eigenvalues.
\end{abstract}

\begin{keywords}
Honeycomb lattice, Dirac points, Helmholtz equation, eigenvalue problem.
\end{keywords}

\begin{AMS}
35C20, 35J05, 35P20
\end{AMS}
\section{Introduction}
Inspired from the discovery of the quantum Hall effects and topological insulators in condensed matter, there has been increasing interest in the exploration of topological photonic/phononic materials recently to manipulate photons/phonons the same way as solids modulating electrons \cite{HR-08, Khanikaev-12, Lu-14, Ozawa-19, Rechtsman-13, Yang-15}. These topological materials allow for the propagation of robust waveguide modes (or so-called edge modes) along the material interfaces without backscattering and even at the presence of large disorder, which could provide revolutionary applications for the design of novel optical/acoustic devices.

Typically the topological photonic/phononic materials are periodic band-gap media with the topological phases associated with the band structures of the underlying differential operators. The band gap is opened at certain special conical vertex of the band structure by breaking the time-reversal symmetry or the space-inversion symmetry of the periodic media  \cite{Fefferman-Thorp-Weinsein-16, Fefferman-Lee-Thorp-Weinsein-17, Lee-Thorp-Weinstein-Zhu-19, LZ21-2,  ma-shvets, makwana-19, wang-08, wu-hu-15}. Such vertices in the dispersion relation are called Dirac points, which emerge from the touching of two bands of the spectrum in a linear conical fashion, and their investigations play an important role in the design of novel topological materials. 

The mathematical analysis of Dirac point dates back to the study of the tight-binding approximation model for graphene in \cite{slonczewski-weiss-58, wallace-47} by Wallace, Slonczewski and Weiss, and more recently for a more generalized quantum graph model with potential on the edges of the honeycomb lattice in \cite{Kuchment-Post-07} by Kuchment and Post. Dirac points for the Schr\"{o}dinger equation model of graphene were considered in \cite{grushin-09} by Grushin over the honeycomb lattice with a weak potential and later thoroughly studied in \cite{Fefferman-Weinsein-12} by Fefferman and Weinstein for potentials that are not necessarily weak; See also \cite{berkokaiko-comech} for an alternative proof of the existence and stability of Dirac points for the Schr\"{o}dinger operator. These results are then generalized to a broad class of  elliptic operator defined over the honeycomb lattice, including the configurations with point scatterers, high-contrast medium, resonant bubbles, etc \cite{ammari-20-4, Cassier-Weinstein-21, Fefferman-Thorp-Weinsein-18, Lee-16, Lee-Thorp-Weinstein-Zhu-19}. We also refer the readers to \cite{ochiai-09, torrent-12, wang-08} for the numerical and experimental investigation of Dirac points in other acoustic and electromagnetic media.

In this paper we study the Dirac points for the honeycomb lattice with impenetrable obstacles embedded in a homogeneous medium. The setup arises naturally in photonic/phononic materials when the inhomogeneities are sound soft/hard in acoustic media or perfect electric/magnetic conducting in electromagnetic media. More precisely, we consider the honeycomb lattice in $\bbR^2$ given by $$ \Lambda := \bbZ \be_1 \oplus  \bbZ \be_2 := \{ \ell_1 \be_1 + \ell_2 \be_2 :  \ell_1, \ell_2 \in \bbZ \}, $$
where the lattice vectors 
$\be_1= a\left(\frac{\sqrt{3}}{2},\frac{1}{2}\right)^T$, $\be_2= a\left(\frac{\sqrt{3}}{2},-\frac{1}{2}\right)^T$, 
and the lattice constant is $a$. Let $Y:= \{ t_1 \be_1 + t_2 \be_2 \, | \, 0 \le t_1, t_2 \le 1 \}$ be the fundamental cell of the lattice, which contains a circular shape impenetrable obstacle $D_\eps$ with radius $\eps$ centered at $x_c=\frac{1}{2}(\be_1+\be_2)$ (see Figure \ref{fig:honeycomb_structure}, left). $Y_\eps:= Y\backslash D_\eps$ denotes the domain exterior to the obstacle in the fundamental cell. The reciprocal lattice vectors $\bkappa_1$ and $\bkappa_2$ are
 $\bkappa_1= \frac{2\pi}{a}\left(\frac{\sqrt{3}}{3},1\right)^T$ and $\bkappa_2=  \frac{2\pi}{a}\left(\frac{\sqrt{3}}{3},-1\right)^T$,  which satisfy  $ \be_i \cdot \bkappa_j = 2\pi \delta_{ij}$ for $i, j = 1, 2$. The reciprocal lattice is given by
$$
\Lambda^*=\bbZ \bkappa_1 \oplus  \bbZ \bkappa_2 : = \left\{\ell_1\bkappa_1+\ell_2 \bkappa_2:   \ell_1, \ell_2 \in \bbZ \right\}.
$$
The hexagon shape of the fundamental cell in $\Lambda^*$, or the Brillouin zone, is denoted by $\cB$ and shown in Figure \ref{fig:honeycomb_structure} (right).

\begin{figure}[!htbp]
    \centering
    \vspace{-10pt}
    \includegraphics[width=8.5cm]{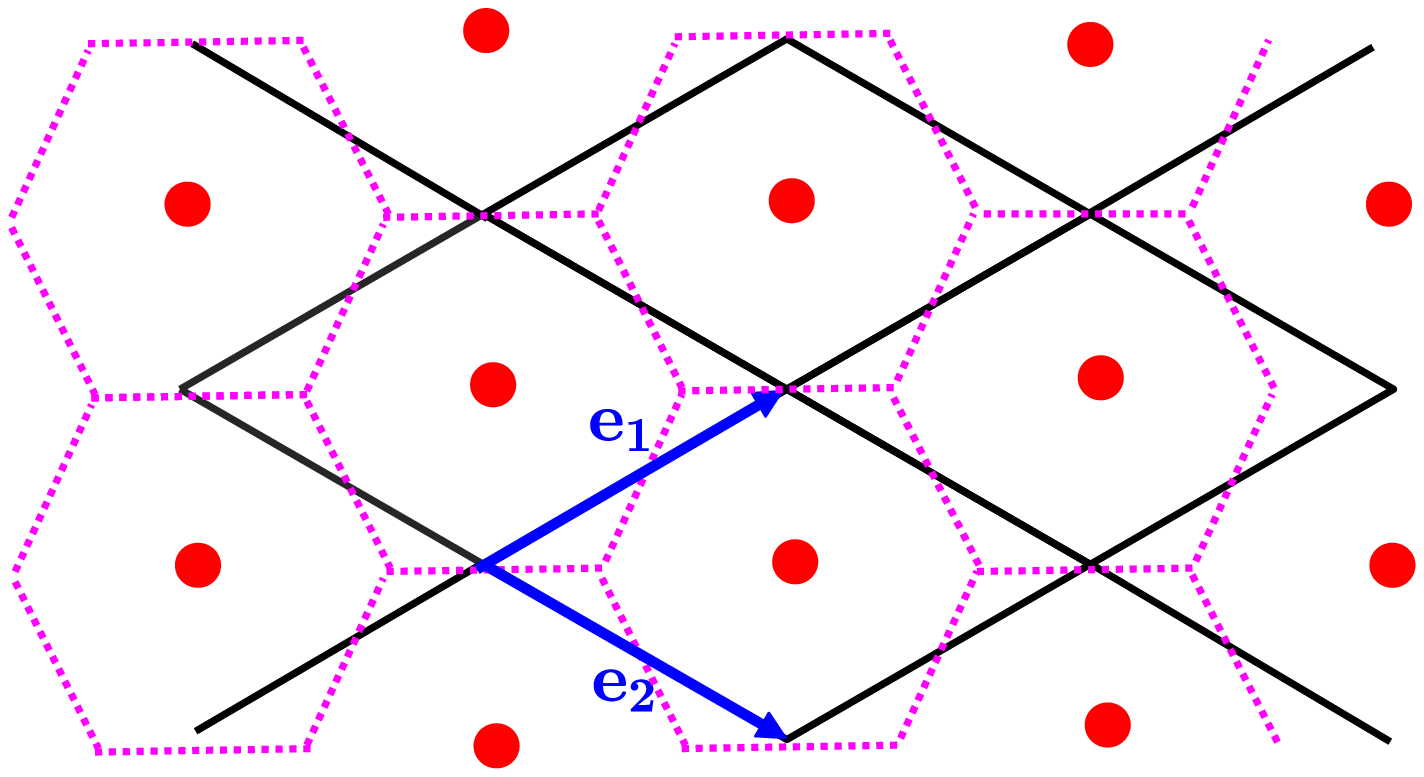} 
    \includegraphics[width=7cm]{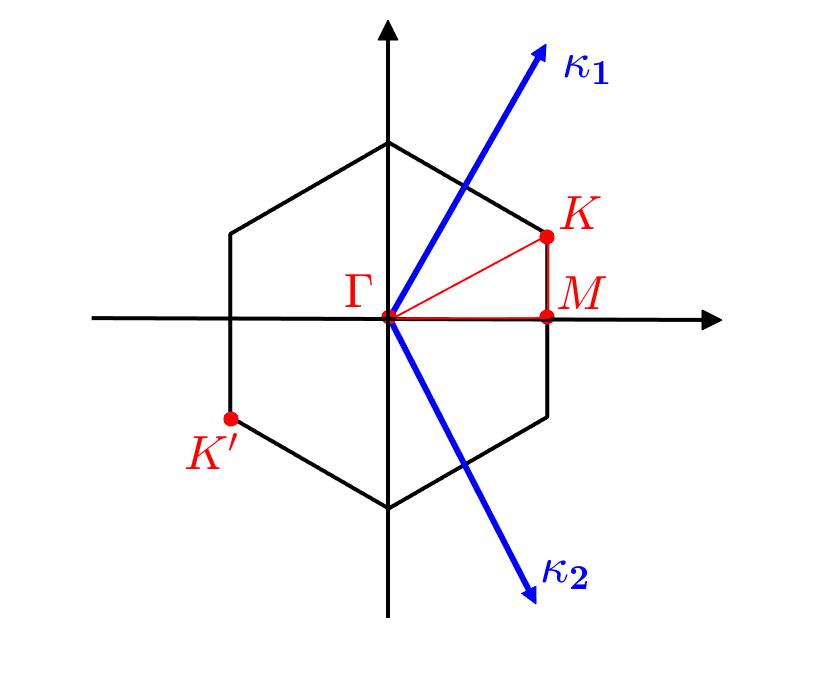}
    \vspace{-20pt}
    \caption{Left: Honeycomb lattice with impenetrable obstacles located in the cell centers. The lattice vectors $\be_1= a\left(\frac{\sqrt{3}}{2},\frac{1}{2}\right)^T$ and $\be_2= a\left(\frac{\sqrt{3}}{2},-\frac{1}{2}\right)^T$. The lattice constant is $a$ and the size of each obstacle is $\eps$. Right: Brillouin zone generated by the reciprocal lattice vectors $\bkappa_1= \frac{2\pi}{a}\left(\frac{\sqrt{3}}{3},1\right)^T$ and $\bkappa_2=  \frac{2\pi}{a} \left(\frac{\sqrt{3}}{3},-1\right)^T$. The high symmetry vertices $K=\frac{2\pi}{a}(\frac{1}{\sqrt3},\frac{1}{3})^T$ and $K'=-K$, and the vertices
    $\Gamma = (0, 0)^T$, $M=\frac{2\pi}{a}(\frac{1}{\sqrt3},0)^T$.}
    \label{fig:honeycomb_structure}
\end{figure}

For each Bloch wave vector $\bkappa\in \cB$, we consider the following eigenvalue problem with the frequency $\omega \in \mathbb R$:
\begin{equation}\label{eq:eigen_prob}
\begin{aligned}
&\Delta  \psi(\bx) +\omega^2 \psi(\bx)=0,  \quad &\bx\in Y_\eps + \Lambda, \\
&\psi(\bx+\be)=e^{i \bkappa \cdot \be}\psi(\bx), & \mbox{for} \; \be \in \Lambda.
\end{aligned}
\end{equation}
The eigenfunction $\psi$ is called the Bloch mode, which can be written as $\psi = e^{i \bkappa \cdot \bx}u(\bx)$, wherein $u$ is a periodic function satisfying $u(\bx+\be)=u(\bx)$ for any $\be \in \Lambda$.
Along the boundary of the obstacles, we impose the Dirichlet boundary condition
\begin{equation}\label{eq:Dirichlet}
   \psi(\bx)=0,  \quad  \bx\in \partial D_\eps + \Lambda, 
\end{equation}
or the Neumann boundary condition
\begin{equation}\label{eq:Neumann}
   \partial_\nu\psi(\bx)=0,  \quad  \bx\in \partial D_\eps + \Lambda.
\end{equation}
Here $\nu$ denotes the unit normal direction pointing to the exterior of the obstacle. We call \eqref{eq:eigen_prob}\eqref{eq:Dirichlet} and 
\eqref{eq:eigen_prob}\eqref{eq:Neumann} the Dirichlet and Neumann eigenvalue problem respectively.

Let $K=\frac{2}{3}\bkappa_1+\frac{1}{3}\bkappa_2=\frac{2\pi}{a}(\frac{1}{\sqrt3},\frac{1}{3})^T\in\cB$ and $K'=-K$ be two vertices of the Brillouin zone shown in Figure \ref{fig:honeycomb_structure} (right). The matrix
\begin{equation*}
    R =
    \begin{pmatrix}
    -1/2 & \sqrt{3}/2 \\
    -\sqrt{3}/2 & -1/2
    \end{pmatrix}
\end{equation*}
is a rotation matrix such that $R\bx$ rotates the vector $\bx$ by $2\pi/3$ clockwise on the plane. Then all vertices of the Brillouin zone $\cB$ are given by $\{K, RK, R^2K, K', RK' R^2K'\}$. In addition, the following relations hold for the reciprocal lattice vectors:
$$  R \bkappa_1 = \bkappa_2,  \quad R \bkappa_2 = - (\bkappa_1 + \bkappa_2),  \quad R(\bkappa_1 + \bkappa_2) = - \bkappa_1.$$

\begin{figure}[!htbp]
    \centering
    \includegraphics[width=7cm]{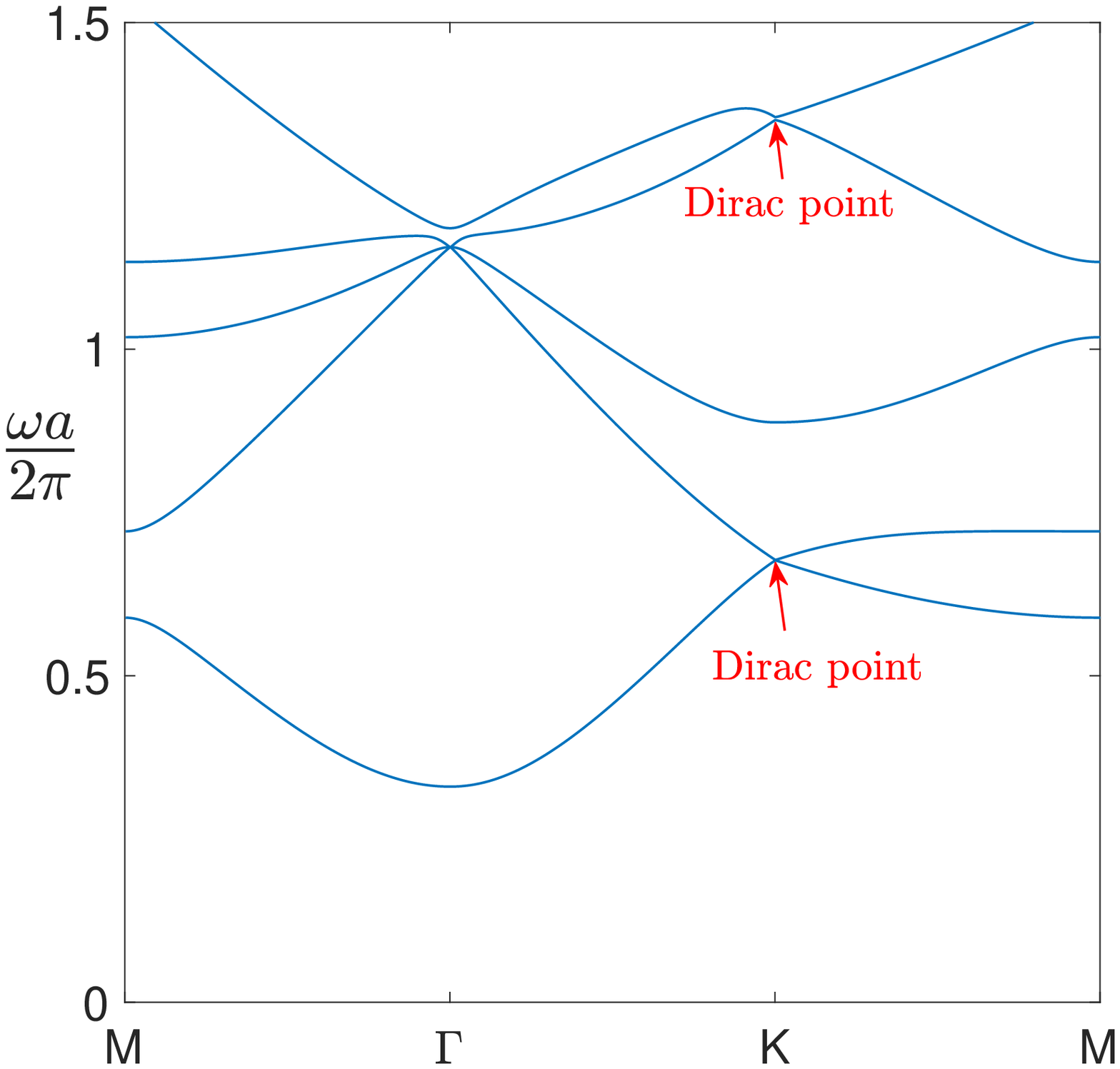}
    \includegraphics[width=9cm]{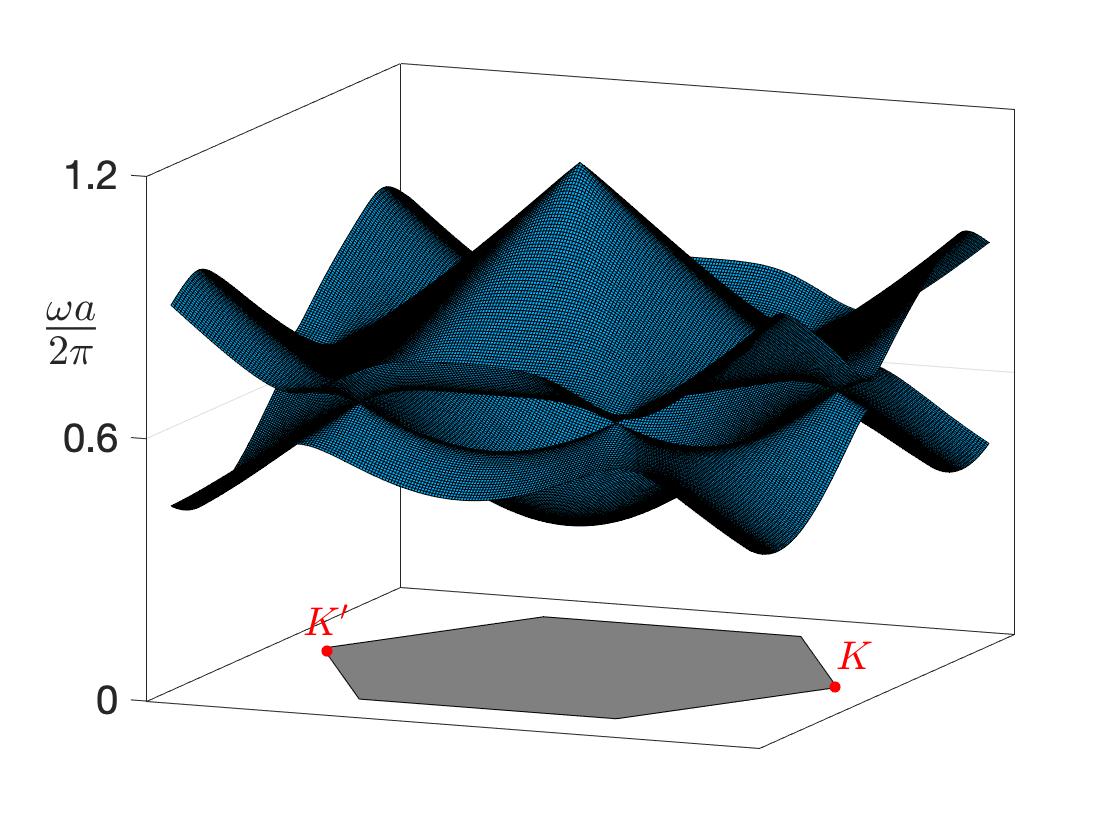}
    \caption{Left: the dispersion curves along the segments $M\to\Gamma\to K \to M$ over the Brillouin zone for the Dirichlet eigenvalue problem. The lattice constant $a=1$ and the obstacle size is $\eps=0.05$; Right: the first two dispersion surfaces for the honeycomb lattice, which shows the conical singularity at the crossing of the first two band surfaces at the high symmetry vertices of the Brillouin zone. For both the Dirichlet and Neumann eigenvalue problems, Dirac points can be formed by the crossing of the first two band surfaces or other higher band surfaces as shown on the left panel and in Section 5. }
    \label{fig:band_structure}
\end{figure}

In the sequel, we set the Bloch wave vector $\bkappa^*=K$ and investigate Dirac points at $\bkappa^*$. Dirac points located at other Bloch wave vectors are reported in Section 5, but their mathematical studies will be our future endeavors. Figure \ref{fig:band_structure} shows the occurrence of the Dirac points formed by the first and second, and the fourth and fifth band surfaces respectively for the Dirichlet eigenvalue problem. Note that due to the symmetry of the honeycomb structure, the same Dirac points also appear at other vertices of the Brillouin zone $\cB$. In this paper we prove the existence of Dirac points $(\bkappa^*, \omega^*)$ for both Dirichlet and Neumann eigenvalue problems and show that Dirac points appear at the crossing of lower band surfaces as well as higher band surfaces. In addition, we carry out quantitative analysis for the eigenvalues and the slopes of the conical dispersion surfaces near each Dirac point. It is shown that each eigenvalue $\omega^*$ is near a singular frequency associated with the Green's function for the honeycomb lattice. These singular frequencies also correspond to the eigenvalues of the homogeneous medium over the honeycomb lattice when the obstacles are absent. In addition, the slopes of the dispersion surfaces are reciprocal to the eigenvalue $\omega^*$.
We apply the layer potential technique to formulate the eigenvalue problem and reduce the integral equation to a set of characteristic equations at $\bkappa^*$ from the symmetry of the integral kernel and by the asymptotic analysis for the integral operator. The eigenvalues are roots of the nonlinear characteristic equations and we derive their asymptotic expansions with respect to the size of the obstacles $\eps$. We would like to point out that our work is closely related to \cite{ammari-20-4, Cassier-Weinstein-21} in the sense that the limit of the high-contrast elliptic operators considered in \cite{ammari-20-4, Cassier-Weinstein-21} are related to the Neumann problem investigated in Section 6, although the arrangements of inclusions considered here are different.

The rest of the paper is organized as follows. Sections 2-5 are devoted to the study of Dirac points for the Dirichlet eigenvalue problem and Section 6 discusses the Neumann eigenvalue problem. In Section 2 we formulate the eigenvalue problem by using the layer potential and set up an infinite linear system for the expansion coefficients of the density function over the obstacle boundary. The existence of the Dirac point at low frequency bands is proved and the asymptotic expansion of the corresponding eigenvalue is derived in Section 3. We establish the conical singularity of the Dirac point in Section 4 and carry out quantitative analysis for the slopes of the dispersion surfaces near this Dirac point. Finally, the Dirac points located at higher frequency bands are investigated in Section 5.

\section{An infinite linear system for the Dirichlet eigenvalue problem}
In this section, we formulate the Dirichlet eigenvalue problem by an integral equation over the obstacle boundary and set up an infinite linear system for the expansion coefficients of the density function. The Dirichlet eigenvalues reduce to the characteristic values of the infinite linear system. 

\subsection{Integral equation formulation for the eigenvalue problem}
For a given Bloch wave vector $\bkappa\in \cB$ and frequency $\omega\in\bbR$,  we use $G(\bkappa,\omega;\bx)$ to denote the corresponding quasi-periodic Green's function that satisfies 
\begin{equation}\label{eq:Green_fun}
(\Delta +\omega^2)  G(\bkappa,\omega;\bx)= \sum_{\be\in\Lambda}  e^{i \bkappa \cdot \be} \delta(\bx-\be)  \quad \bx, \by \in \bbR^2.
\end{equation}
It can be shown that
\begin{equation}\label{eq:G_lattice}
G(\bkappa,\omega;\bx) = -\frac{i}{4} \sum_{\be \in \Lambda} e^{i\bkappa\cdot \be} H_0^{(1)}(\omega |\bx-\be|),
\end{equation}
where $H_0^{(1)}$ is the zeroth-order Hankel function of the first kind. 
Alternatively, $G(\bkappa,\omega; \bx)$ adopts the following spectral representation (cf. \cite{ammari-book, ammari-kang-lee}):
\begin{equation}\label{eq:G_spec_decomp}
G(\bkappa,\omega;\bx) = \frac{1}{|Y|}\sum_{\bq\in \Lambda^*} \frac{e^{i(\bkappa+\bq)\cdot\bx}}{\omega^2-|\bkappa+\bq|^2}.
\end{equation}
Note the Green's function is not well defined when
the frequency satisfies $|\omega|=|\bkappa+\bq|$ for certain $q\in \Lambda^*$. We call such a frequency a singular frequency and denote the set of singular frequencies by
\begin{equation*}
\Omega_{\text{sing}}(\bkappa):= \{\omega: |\omega|=|\bkappa+\bq| \; \mbox{for certain} \; \bq\in\Lambda^* \}.
\end{equation*}
For each $\bkappa\in\cB$, we arrange all the singular frequencies in $\Omega_{\text{sing}}(\bkappa)$ in ascending order and denote them as
\begin{equation*}
 \bar\omega_1(\bkappa) < \bar\omega_2(\bkappa) < \cdots < \bar\omega_n(\bkappa) < \bar\omega_{n+1}(\bkappa) < \cdots.
\end{equation*}

First, the following lemma is straightforward from the expansion \eqref{eq:G_spec_decomp}.
\begin{lemma}\label{lem:G_symm1}
The Green's function $G(\bkappa,\omega;\bx)$ satisfies
\begin{equation}\label{eq:G_symm1}
G(\bkappa,\omega;\bx) = \overline{G(\bkappa,\omega;-\bx)} \quad \mbox{for} \; \bx \in\bbR^2\backslash\{0\}.
\end{equation}
\end{lemma}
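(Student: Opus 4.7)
My plan is to read off the identity directly from the spectral expansion \eqref{eq:G_spec_decomp}, which is the hint given in the excerpt. The observation is that the denominator in each summand, namely $\omega^2 - |\bkappa + \bq|^2$, is real because $\omega \in \bbR$ and $\bkappa + \bq \in \bbR^2$. Hence complex conjugation acts only on the exponential factor, and the action is elementary: $\overline{e^{i(\bkappa+\bq)\cdot(-\bx)}} = e^{i(\bkappa+\bq)\cdot \bx}$.

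Concretely, I would start from
\[
\overline{G(\bkappa,\omega;-\bx)} = \overline{\frac{1}{|Y|}\sum_{\bq\in \Lambda^*} \frac{e^{-i(\bkappa+\bq)\cdot\bx}}{\omega^2-|\bkappa+\bq|^2}},
\]
pass the conjugation through the sum, and use the reality of $\omega^2 - |\bkappa+\bq|^2$ together with $\overline{e^{-i(\bkappa+\bq)\cdot\bx}}=e^{i(\bkappa+\bq)\cdot\bx}$ to recover \eqref{eq:G_spec_decomp} evaluated at $\bx$. A term-by-term comparison closes the argument.

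The only real subtlety is justifying the interchange of conjugation and summation, since the series in \eqref{eq:G_spec_decomp} converges only in the distributional sense (in $\mathcal{D}'(\bbR^2/\Lambda)$) rather than absolutely. This is however routine: complex conjugation is a continuous involution on $\mathcal{D}'$, so it commutes with distributional limits of partial sums, which settles the issue. Alternatively, one could bypass the spectral representation entirely and argue from \eqref{eq:G_lattice}: for $\omega \in \bbR$ the Hankel function satisfies $\overline{H_0^{(1)}(\omega r)} = H_0^{(2)}(\omega r)$, so this route is slightly less immediate and I would keep the spectral-sum argument as the primary proof, mentioning the Hankel identity only if a second perspective is desired.

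I do not foresee a genuine obstacle here; the entire content of the lemma is that $\omega$ and $|\bkappa+\bq|^2$ are real, and the target identity is literally the statement obtained by conjugating the spectral sum term by term.
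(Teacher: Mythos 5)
Your proposal is correct and is essentially the paper's argument: the paper simply notes the lemma is ``straightforward from the expansion \eqref{eq:G_spec_decomp}'', which is exactly your term-by-term conjugation using the reality of $\omega^2-|\bkappa+\bq|^2$. Your extra remarks on distributional convergence and the Hankel-function alternative are fine but not needed.
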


\begin{lemma}\label{lem:G_symm2}
Let $\bkappa = \bkappa^*$, then the Green's function $G(\bkappa,\omega;\bx)$ satisfies
\begin{equation}\label{eq:G_symm2}
G(\bkappa,\omega;\bx) = G(\bkappa,\omega;R\bx) \quad \mbox{for} \; \bx\in\bbR^2\backslash\{0\}.
\end{equation}
\end{lemma}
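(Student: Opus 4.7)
The plan is to derive the $R$-invariance from the spectral representation \eqref{eq:G_spec_decomp} (or, equivalently, the real-space representation \eqref{eq:G_lattice}) by a change of summation index. The crucial structural fact is that although $R$ does not fix $\bkappa^*$ on the nose, it fixes $\bkappa^*$ modulo the reciprocal lattice $\Lambda^*$. Concretely, iterating the given relations $R\bkappa_1=\bkappa_2$, $R\bkappa_2=-(\bkappa_1+\bkappa_2)$ yields $R^{-1}=R^{T}=R^2$ and, with $\bkappa^*=\tfrac{2}{3}\bkappa_1+\tfrac{1}{3}\bkappa_2$, a direct computation gives
\[
R^{T}\bkappa^*=\bkappa^*+\bq_0,\qquad \bq_0:=-(\bkappa_1+\bkappa_2)\in\Lambda^*.
\]
Moreover $R$ (and hence $R^{T}$) maps $\Lambda^*$ to itself, since both $R\bkappa_1$ and $R\bkappa_2$ lie in $\Lambda^*$.

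Given these facts I would proceed as follows. Using orthogonality of $R$ to transfer the rotation onto the dual variable, $(\bkappa^*+\bq)\cdot R\bx=R^{T}(\bkappa^*+\bq)\cdot\bx$ and $|\bkappa^*+\bq|=|R^{T}(\bkappa^*+\bq)|$, the spectral formula gives
\[
G(\bkappa^*,\omega;R\bx)=\frac{1}{|Y|}\sum_{\bq\in\Lambda^*}\frac{e^{i\,R^{T}(\bkappa^*+\bq)\cdot\bx}}{\omega^2-|R^{T}(\bkappa^*+\bq)|^{2}}.
\]
Writing $R^{T}(\bkappa^*+\bq)=\bkappa^*+(R^{T}\bq+\bq_0)$ and using that the map $\bq\mapsto R^{T}\bq+\bq_0$ is a bijection of $\Lambda^*$, the substitution $\tilde\bq:=R^{T}\bq+\bq_0$ turns the right-hand side into exactly $G(\bkappa^*,\omega;\bx)$.

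Since \eqref{eq:G_spec_decomp} is only conditionally convergent, for a cleaner argument I would run the same reindexing on the real-space form \eqref{eq:G_lattice}. One verifies directly that $R$ preserves the direct lattice $\Lambda$ (in fact $R\be_1=\be_2-\be_1$ and $R\be_2=-\be_1$), so substituting $\be\mapsto R\be$ in \eqref{eq:G_lattice} produces the factor $e^{i\bkappa^*\cdot R\be}=e^{i\,R^{T}\bkappa^*\cdot\be}=e^{i\bkappa^*\cdot\be}e^{i\bq_0\cdot\be}$; the last factor equals $1$ by the duality $\bq_0\cdot\be\in 2\pi\bbZ$ for $\bq_0\in\Lambda^*$, $\be\in\Lambda$, and the Hankel summand depends only on $|\bx-\be|$, which is invariant under the simultaneous rotation. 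This collapses $G(\bkappa^*,\omega;R\bx)$ to $G(\bkappa^*,\omega;\bx)$. The only real work is the bookkeeping that $R^{T}\bkappa^*\equiv\bkappa^*\ (\mathrm{mod}\ \Lambda^*)$; no serious analytic obstacle arises once this is in place.
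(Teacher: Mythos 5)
Your proposal is correct and follows essentially the same route as the paper: the spectral representation \eqref{eq:G_spec_decomp}, orthogonality of $R$, the key fact that $R$ fixes $\bkappa^*$ modulo $\Lambda^*$ (the paper uses $R\bkappa^*=\bkappa^*-\bkappa_1$, you use $R^{T}\bkappa^*=\bkappa^*-(\bkappa_1+\bkappa_2)$), and a bijective reindexing of the reciprocal lattice. Your supplementary real-space argument via \eqref{eq:G_lattice}, using $R\Lambda=\Lambda$ and $\bq_0\cdot\be\in 2\pi\bbZ$, is a valid and slightly more careful variant of the same idea, but not a different method.
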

\begin{proof}
For a Bloch wave vector $\bq=\ell_1 \bkappa_1 + \ell_2 \bkappa_2$, 
using the relations $R \bkappa_1 = \bkappa_2$, $R \bkappa_2 = - (\bkappa_1 + \bkappa_2)$, and $R\bkappa^*=\bkappa^*-\bkappa_1$,
it follows that
$R(\bkappa^*+\bq) = \bkappa^* - (1+\ell_2)\bkappa_1 + (\ell_1-\ell_2)\bkappa_2 = \bkappa^* + \tilde \bq$ where $\tilde \bq:=(1+\ell_2)\bkappa_1 + (\ell_1-\ell_2)\bkappa_2 \in \Lambda^*$. Note that the map from $\bq$ to $\tilde\bq$ is a bijective map on $\Lambda^*$. Therefore,
\begin{equation*}
\begin{aligned}
    G(\bkappa^*,\omega;\bx) &= \frac{1}{|Y|}\sum_{\bq\in \Lambda^*} \frac{e^{i(\bkappa^*+\bq)\cdot\bx}}{\omega^2-|\bkappa^*+\bq|^2} = \frac{1}{|Y|}\sum_{\bq\in \Lambda^*} \frac{e^{iR(\bkappa^*+\bq)\cdot R\bx}}{\omega^2-|R(\bkappa^*+\bq)|^2}\\
    &= \frac{1}{|Y|}\sum_{\tilde\bq\in \Lambda^*} \frac{e^{i(\bkappa^*+\tilde\bq)\cdot R\bx}}{\omega^2-|\bkappa^*+\tilde\bq|^2}=G(\bkappa^*,\omega;R\bx).
\end{aligned}
\end{equation*}
\end{proof}

We now introduce the following single-layer potential 
\begin{equation}\label{eq:Sdef}
[\cS^{\bkappa,\omega}\varphi](\bx):=\int_{\by\in \partial D_\eps} G(\bkappa,\omega;\bx-\by) \varphi(\by) \, ds_\by, \quad \; \bx \in   Y_\eps + \Lambda,
\end{equation}
where $\varphi$ is a density function on $\partial D_\eps$.  
Let $H^s(\partial D_\eps)$ be the standard Sobolev space of order $s$ over the boundary of $D_\eps$. It is well-known that $\cS^{\bkappa,\omega}$ is bounded from $H^{-1/2}(\partial D_\eps)$ to $H^{1/2}(\partial D_\eps)$ \cite{ammari-book, ammari-kang-lee}. We represent the Bloch mode $\psi$ for the eigenvalue problem \eqref{eq:eigen_prob}-\eqref{eq:Dirichlet} using the above defined layer potential. Using the Green's identity, it is easy to check that $(\omega,\psi)$ is an eigenpair for the Dirichlet problem \eqref{eq:eigen_prob}-\eqref{eq:Dirichlet} if and only if there exists a density function $\varphi\in H^{-1/2}(\partial D_\eps)$ such that
\begin{equation}\label{eq:int_eq1}
 [\cS^{\bkappa,\omega} \varphi](\bx)=0 \quad \mbox{for} \; \bx \in   \partial D_\eps. 
\end{equation}
To facilitate the asymptotic analysis, we apply the change of variables to rewrite the above integral equation as
\begin{equation}\label{eq:int_eq2}
[\cS_\eps^{\bkappa,\omega}\varphi](\bx)=0 \quad \mbox{for} \; \bx \in   \partial D_1,
\end{equation}
where the integral operator $\cS_\eps^{\bkappa,\omega}$ takes the form
\begin{equation}
    [\cS_\eps^{\bkappa,\omega}\varphi](\bx):= \int_{\by\in \partial D_1} G(\bkappa,\omega;\eps(\bx-\by)) \varphi(\by) \, ds_\by, \quad \bx\in\partial D_1.
\end{equation}
We seek for eigenpairs $(\omega,\varphi)$ such that \eqref{eq:int_eq2} attains nontrivial solutions.

\subsection{Eigenvalues as the characteristic values of an infinite linear system}
Let the boundary of $D_1$ be parameterized by $\br(t)=(r_1(t), r_2(t))$ where $r_1(\cdot)$ and $r_2(\cdot)$ are two smooth periodic functions with period $2\pi$. 
Then the linear operator $\cS_\eps^{\bkappa,\omega}$ induces a bounded operator from $H^{-1/2}([0,2\pi])$ to $H^{1/2}([0,2\pi])$ in the parameter space, which we still denote as $\cS_\eps^{\bkappa,\omega}$ for ease of notation.
In what follows, we shall work exclusively when $D_1$ is a unit disk and its parametric equation is given by $\br(t)=(\cos t, \sin t)$. 

We now solve the integral equation \eqref{eq:int_eq2} with the above parameterization. Define
\begin{equation}
  \phi_n(t):=\frac{1}{\sqrt{2\pi}}e^{int},\quad t\in[0,2\pi], \quad n\in\bbZ.
\end{equation}
Then $\{\phi_n\}_{n \in \bbZ}$ forms a complete orthogonal basis for $H^{-1/2}([0,2\pi])$.  We expand $\varphi\in H^{-1/2}([0,2\pi])$ as  $\displaystyle{\varphi=\sum_{n=-\infty}^\infty c_n \phi_n}$ where $\{c_{n} \}_{n\in \bbZ}  \in \mathbb{H}^{-1/2}$. 
Here and thereafter, the space $\mathbb{H}^{s}$ is defined by
\begin{equation*}
     \mathbb{H}^{s}:=\Big\{ \{c_{n} \}_{n\in \bbZ}: \sum_{n=-\infty}^{\infty} (1+n^2)^{s}|c_n|^2<\infty \Big\}.
\end{equation*} 
Then \eqref{eq:int_eq2} reads
\begin{equation*}
\sum_{n=-\infty}^\infty c_n \, \left(\cS_\eps^{\bkappa,\omega}\phi_n\right) = 0.
\end{equation*}

Define an infinite matrix $\cA=[a_{m,n}]_{m, n \in \bbZ}$, where
\begin{equation}
 a_{m,n}(\bkappa,\omega)=\left( \phi_m, \cS_\eps^{\bkappa,\omega}\phi_n \right):=\int_0^{2\pi} \overline{\phi_m(t)} \, [\cS_\eps^{\bkappa,\omega}\phi_n](t) \, dt.
\end{equation}
We see that \eqref{eq:int_eq2} holds if and only if there exists nonzero $\bc=\{c_n\}_{n\in \bbZ}\in \mathbb{H}^{-1/2}$ such that
the following infinite linear system holds:
\begin{equation}\label{eq:linear_sys2}
\cA(\bkappa,\omega)\, \bc = \bzero.
\end{equation}
Such $\omega$ is the called the characteristic values of the system. To study the eigenvalues $\omega$ of the Dirichlet problem \eqref{eq:eigen_prob}-\eqref{eq:Dirichlet}, 
we investigate the characteristic values of \eqref{eq:linear_sys2} in the rest of this paper. The matrix $\cA$ inherits the symmetries of the Green's function and the problem geometry as discussed below.

\begin{lemma}\label{lem:amn}
The following relations hold for the elements of the matrix $\cA$:
\begin{itemize}
    \item [(i)]  $a_{m,n} = \overline{a_{n,m}}$;
    \item [(ii)] $a_{m,n} = (-1)^{m-n}a_{-n,-m} = (-1)^{m-n}\overline{a_{-m,-n}} $;
    \item [(iii)] If $\bkappa=\bkappa^*$, then $a_{m,n}\neq0$ only if $\bmod(m-n,3)=0$, where $\bmod(\cdot,3)$ denotes the modulo operation with the divisor equals to $3$.

\end{itemize}
\end{lemma}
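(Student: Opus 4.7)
The three identities all follow by inserting the parameterization $\br(t) = (\cos t, \sin t)$ into the definition of $a_{m,n}$ and combining an elementary change of variables in the double integral with the corresponding symmetry of the Green's function. Writing
\[
a_{m,n}(\bkappa,\omega) = \frac{1}{2\pi}\int_0^{2\pi}\!\!\int_0^{2\pi} e^{-imt+ins}\,G\bigl(\bkappa,\omega;\eps(\br(t)-\br(s))\bigr)\,dt\,ds,
\]
each part becomes a short calculation.

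For (i), I would interchange the roles of $t$ and $s$ in the integral representation of $\overline{a_{n,m}}$ and apply Lemma~\ref{lem:G_symm1} in the form $\overline{G(\bkappa,\omega;\bx)}=G(\bkappa,\omega;-\bx)$. After the swap, $\br(s)-\br(t)$ becomes $-(\br(t)-\br(s))$, so the conjugated Green's function is restored to its original form and the resulting expression is exactly $a_{m,n}$.

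For (ii), the key substitution is $t\mapsto t+\pi$, $s\mapsto s+\pi$, which leaves the $2\pi$-periodic integrand globally invariant while mapping $\br(t)-\br(s)$ to $-(\br(t)-\br(s))$ (because $\br(\cdot+\pi)=-\br(\cdot)$) and multiplying the exponential factor by $e^{i\pi(n-m)}=(-1)^{m-n}$. Applying Lemma~\ref{lem:G_symm1} then replaces $G$ by $\overline{G}$; recognising the remaining integral as $\overline{a_{-m,-n}}$ gives $a_{m,n}=(-1)^{m-n}\overline{a_{-m,-n}}$, and combining this with (i) yields $(-1)^{m-n}a_{-n,-m}$.

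For (iii), I would first verify that the rotation matrix $R$ acts on the parameterized circle by $R\br(t)=\br(t-2\pi/3)$, which follows immediately from $R$ being clockwise rotation by $2\pi/3$. At $\bkappa=\bkappa^*$ Lemma~\ref{lem:G_symm2} allows me to replace $G(\bkappa^*,\omega;\eps(\br(t)-\br(s)))$ by $G(\bkappa^*,\omega;\eps(\br(t-2\pi/3)-\br(s-2\pi/3)))$; the substitution $t\mapsto t+2\pi/3$, $s\mapsto s+2\pi/3$ then returns the integrand to its original form up to an overall phase $e^{i(n-m)2\pi/3}$. The identity $a_{m,n}=e^{i(n-m)2\pi/3}a_{m,n}$ forces $a_{m,n}=0$ unless $3\mid(m-n)$.

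None of these steps is technically difficult; the only point that demands some care is fixing the sign conventions in part (iii)---specifically checking that $R$ is indeed the clockwise (not counterclockwise) rotation and hence that $R\br(t)=\br(t-2\pi/3)$, so that the phase acquired by the substitution is the one that produces the desired modular constraint rather than an empty condition.
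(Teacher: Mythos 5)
Your proposal is correct and follows essentially the same route as the paper: the conjugation symmetry $\overline{G(\bkappa,\omega;\bx)}=G(\bkappa,\omega;-\bx)$ for (i), the half-period shift $t,\tau\mapsto t+\pi,\tau+\pi$ together with $\br(t+\pi)=-\br(t)$ for (ii), and the rotational symmetry of $G$ at $\bkappa=\bkappa^*$ combined with $R\br(t)=\br(t-2\pi/3)$ for (iii). The only cosmetic differences are that the paper proves (iii) by expanding $u_n=\cS_\eps^{\bkappa^*,\omega}\phi_n$ and matching Fourier coefficients rather than by your direct phase identity on the double integral, and your cautionary note about the rotation direction is not actually needed, since either sign of the acquired phase $e^{\pm i(m-n)2\pi/3}$ yields the same condition $\bmod(m-n,3)=0$.
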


\begin{proof}
(i). A straightforward calculation yields
\begin{align*}
\overline{a_{n,m}} = \overline{\left(\phi_n, \cS_\eps^{\bkappa,\omega}\phi_m\right)} & =  \int_{-\pi}^{\pi}\int_{-\pi}^{\pi} \overline{G(\bkappa, \omega; \eps(\br(t)-\br(\tau)))} \; \overline{\phi_m(\tau)} \, d\tau \; \phi_n(t) \, dt \\
& =  \int_{-\pi}^{\pi}\int_{-\pi}^{\pi} G(\bkappa, \omega; \eps(\br(\tau)-\br(t))) \;  \phi_n(t) \, dt \; \overline{\phi_m(\tau)}  \, d\tau = a_{m,n}.
\end{align*}

(ii). Let $t=t'-\pi$ and $\tau=\tau'-\pi$, it follows that
\begin{align*}
a_{m,n}  & =  \int_{-\pi}^{\pi}\int_{-\pi}^{\pi} G(\bkappa, \omega; \eps(\br(t)-\br(\tau))) \; \phi_n(\tau) \, d\tau \; \overline{\phi_m(t)} \, dt  \\
& = \int_{0}^{2\pi}\int_{0}^{2\pi} G\big(\bkappa, \omega; \eps(\br(t'-\pi)-\br(\tau'-\pi))\big) \; \phi_n(\tau'-\pi) \, d\tau' \; \overline{\phi_m(t'-\pi)} \, dt'  \\
& = e^{i(m-n)\pi} \int_{0}^{2\pi}\int_{0}^{2\pi} G\big(\bkappa, \omega; \eps(\br(\tau')-\br(t'))\big) \;  \; \overline{\phi_m(t')} \, dt' \phi_n(\tau') \, d\tau' \\
& = (-1)^{m-n}a_{-n,-m},
\end{align*}
where we use $\phi_{-n}(t)=\overline{\phi_{n}(t)}$. \\

(iii). For $\bkappa=\bkappa^*$, using Lemma \ref{lem:G_symm2}, the integral
\begin{equation*}
   u(\bx)  := \cS_\eps^{\bkappa^*,\omega}\phi = \int_{\partial D_1} G(\bkappa^*,\omega;\bx-\by) \, \phi(\by) \,ds_\by 
     =\int_{\partial D_1} G(\bkappa^*,\omega;R\bx-\tilde\by) \, \phi\left(R^T\tilde\by\right) \,ds_{\tilde\by},
\end{equation*}
where $\tilde\by = R\by$. Setting $\phi=\phi_n$ and $u=u_n$, and using $\phi_n\left(R^T\tilde\by\right)=e^{\frac{i2n\pi}{3}}\phi_n\left(\tilde\by\right)$, we obtain
\begin{equation}\label{eq:un_rot}
    u_n(\bx) := \cS_\eps^{\bkappa^*,\omega}\phi_n = e^{\frac{i2n\pi}{3}} \int_{\partial D_1} G(\bkappa^*,\omega;R\bx-\tilde\by) \, \phi_n\left(\tilde\by\right) \,ds_{\tilde\by} = e^{\frac{i2n\pi}{3}} u_n(R\bx).
\end{equation}
On the other hand, $u_n$ attains the following expansion in the parameter space:
\begin{equation}\label{eq:un_exp}
 u_n(t)=\sum_{m=-\infty}^\infty\left(\phi_m, \cS_\eps^{\bkappa^*,\omega}\phi_n \right) \phi_m(t) =\sum_{m=-\infty}^\infty a_{m,n} \phi_m(t).
\end{equation}
Substituting into \eqref{eq:un_rot} yields
\begin{equation*}
    \sum_{m=-\infty}^\infty a_{m,n} e^{i(mt-2n\pi/3)} = \sum_{m=-\infty}^\infty a_{m,n} e^{im(t-2\pi/3)}.
\end{equation*}
Thus $a_{m,n}\neq0$  only if $mt-2n\pi/3=m(t-2\pi/3)+2m'\pi$ for some $m'\in\bbZ$, or
$m-n=3m'$.

\end{proof}

By virtue of Lemma \ref{lem:amn}, when $\bkappa=\bkappa^*$, the linear system \eqref{eq:linear_sys2} decouples into three subsystems as follows:
\begin{align}
  &  \sum_{n=-\infty}^\infty  a_{3m,3n}(\bkappa^*,\omega) \, c_{3n} = 0, \quad m=0, \pm 1, \pm 2, \cdots; \label{eq:subsys1} \\
  &   \sum_{n=-\infty}^\infty  a_{3m+1,3n+1}(\bkappa^*,\omega) \, c_{3n+1} = 0, \quad m=0, \pm 1, \pm 2, \cdots; \label{eq:subsys2} \\
  &  \sum_{n=-\infty}^\infty  a_{3m-1,3n-1}(\bkappa^*,\omega) \, c_{3n-1} = 0, \quad m=0, \pm 1, \pm 2, \cdots. \label{eq:subsys3}
\end{align}
Correspondingly, we decompose the space $\mathbb{H}^{s}$ as the direct sum $\mathbb{H}^{s} = \mathbb{H}^{s}_{\bkappa*, 0} \oplus \mathbb{H}^{s}_{\bkappa*, 1}  \oplus \mathbb{H}^{s}_{\bkappa*, -1}$, in which
\begin{equation*}
    \mathbb{H}^{s}_{\bkappa*,j} :=\Big\{ \{c_{n} \}_{n\in \bbZ}\in \mathbb{H}^{s}: c_n=0 \text{ if } \bmod(n-j,3)\neq 0 \Big\},\quad j=0, 1, -1.
\end{equation*}
Each subsystem above corresponds to restricting the full system \eqref{eq:linear_sys2} to the space $\mathbb{H}_{\bkappa*,j}^{-1/2}$.
In connection with the eigenvalue problem \eqref{eq:int_eq2}, we decompose the function space
$H^s([0,2\pi])$ into 
\begin{equation}
    H^s([0,2\pi]) = H^s_{\bkappa*,0}([0,2\pi]) \oplus H^s_{\bkappa*,1}([0,2\pi])  \oplus H^s_{\bkappa*,-1}([0,2\pi]). 
\end{equation}
Alternatively, the function space $H^s_{\bkappa^*,j}([0,2\pi])$ can be characterized as follows:
\begin{align*}
H^s_{\bkappa*,0}([0,2\pi]) &= \Big\{ \phi(t) \in H^s([0,2\pi]): \phi\left(t+\frac{2\pi}{3}\right) = \phi(t) \Big\}, \\
H^s_{\bkappa*,1}([0,2\pi]) &= \Big\{ \phi(t) \in H^s([0,2\pi]): \phi\left(t+\frac{2\pi}{3}\right) = e^{i\frac{2\pi}{3}}\phi(t) \Big\}, \\
H^s_{\bkappa*,-1}([0,2\pi]) &= \Big\{ \phi(t) \in H^s([0,2\pi]): \phi\left(t+\frac{2\pi}{3}\right) = e^{-i\frac{2\pi}{3}}\phi(t) \Big\}.
\end{align*}
If $\{c_{3n+j} \}_{n \in \bbZ} \in \mathbb{H}^{-1/2}$ is an eigenvector for the corresponding system in \eqref{eq:subsys1}-\eqref{eq:subsys3}, then
the eigenfunction $\displaystyle{\phi(t) = \sum_{n=-\infty}^{\infty}c_{3n+j} \phi_{3n+j}(t)}$ for the eigenvalue problem \eqref{eq:int_eq2} belongs to $H^{-1/2}_{\bkappa*,j}([0,2\pi])$.

In the sequel, we investigate the characteristic values $\omega$ for each  of \eqref{eq:subsys1}-\eqref{eq:subsys3} such that the system attains nontrivial solutions $\{c_{3n+j}\}_{n\in\bbZ}$ for $j=0, 1$ or $-1$. 
As shown below, the systems \eqref{eq:subsys2} and \eqref{eq:subsys3} attain the same characteristic values.

\begin{proposition}\label{prop_repeated_eig}
$\omega$ is a characteristic value of \eqref{eq:subsys2} with the corresponding solution $\{c_{3n+1} \}_{n\in \bbZ}$ if and only if
$\omega$ is a characteristic value of \eqref{eq:subsys3} with the solution $\{c_{3n-1} \}_{n\in\bbZ}$ satisfying $c_{3n-1}=(-1)^{-3n}\overline{c_{-3n+1}}$ for each $n$.
\end{proposition}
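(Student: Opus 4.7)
The plan is to deduce the proposition directly from the symmetry relation $a_{m,n} = (-1)^{m-n}\overline{a_{-m,-n}}$ established in Lemma \ref{lem:amn}(ii). The idea is that the matrix of \eqref{eq:subsys3} is (up to signs) the complex conjugate of the matrix of \eqref{eq:subsys2} transported by the index reflection $n \mapsto -n$. This immediately suggests that taking complex conjugates and reversing indices should map one system to the other.

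Concretely, I would start with the assumption that $\{c_{3n+1}\}_{n \in \bbZ} \in \mathbb{H}^{-1/2}_{\bkappa^*,1}$ is a nontrivial solution of \eqref{eq:subsys2} at some $\omega$, i.e.
\begin{equation*}
\sum_{n=-\infty}^{\infty} a_{3m+1,3n+1}(\bkappa^*,\omega)\, c_{3n+1} = 0, \quad m \in \bbZ.
\end{equation*}
Taking complex conjugates and invoking Lemma \ref{lem:amn}(ii) with the identity $(3m+1)-(3n+1)=3(m-n)$ yields
\begin{equation*}
\sum_{n=-\infty}^{\infty} (-1)^{3(m-n)} a_{-3m-1,-3n-1}(\bkappa^*,\omega)\, \overline{c_{3n+1}} = 0.
\end{equation*}
The next step is to perform the bijective reindexing $m \mapsto -m$, $n \mapsto -n$, after which $-3m-1 = 3m-1$ and $-3n-1 = 3n-1$, and then multiply the $m$-th equation by $(-1)^{-3m}$ (which does not affect the vanishing). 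This transforms the relation into
\begin{equation*}
\sum_{n=-\infty}^{\infty} a_{3m-1,3n-1}(\bkappa^*,\omega)\, \big[(-1)^{-3n}\overline{c_{-3n+1}}\big] = 0, \quad m \in \bbZ,
\end{equation*}
which is exactly \eqref{eq:subsys3} with $c_{3n-1} := (-1)^{-3n}\overline{c_{-3n+1}}$. Nontriviality is preserved because the map $\{c_{3n+1}\} \mapsto \{(-1)^{-3n}\overline{c_{-3n+1}}\}$ is an antilinear bijection of $\mathbb{H}^{-1/2}_{\bkappa^*,1}$ onto $\mathbb{H}^{-1/2}_{\bkappa^*,-1}$.

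For the converse direction, every algebraic step above is reversible: starting from a solution of \eqref{eq:subsys3} one applies complex conjugation, uses Lemma \ref{lem:amn}(ii) again, and reindexes to recover \eqref{eq:subsys2}. I do not anticipate any real obstacle beyond careful bookkeeping of the exponents of $-1$; the only substantive input is Lemma \ref{lem:amn}(ii), and all the other manipulations are formal. The proposition then records the fact that the $j=1$ and $j=-1$ components of the full linear system \eqref{eq:linear_sys2} at $\bkappa^*$ always share their characteristic values with the explicit bijection above on the eigenspaces—this built-in spectral degeneracy is precisely what creates a two-dimensional kernel and hence a candidate for the crossing of two bands at a Dirac point.
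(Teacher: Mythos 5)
Your proof is correct and follows essentially the same route as the paper: both rest solely on Lemma \ref{lem:amn}(ii), combining complex conjugation with the index reflection $n\mapsto -n$ (you conjugate \eqref{eq:subsys2} and reindex into \eqref{eq:subsys3}, whereas the paper substitutes $c_{3n-1}=(-1)^{-3n}\overline{c_{-3n+1}}$ into \eqref{eq:subsys3} and reduces it to the conjugate of \eqref{eq:subsys2}, which is the same manipulation read in the opposite direction). Your added remark that the map is an antilinear norm-preserving bijection, so nontriviality is preserved, is a harmless refinement of what the paper leaves implicit.
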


\begin{proof}
Let $\omega$ be a characteristic value of \eqref{eq:subsys2} and $\{c_{3n+1} \}_{n\in \bbZ}$ be the corresponding solution such that
\begin{equation*}
    \sum_{n=-\infty}^\infty  a_{3m+1,3n+1}(\bkappa^*,\omega) \, c_{3n+1} = 0, \quad m=0, \pm 1, \pm 2, \cdots. \label{eq:subsys2-2} 
\end{equation*}
Choose $c_{3n-1}=(-1)^{-3n}\overline{c_{-3n+1}}$ for $n\in\bbZ$. Then by Lemma \ref{lem:amn} (ii), for each $m\in \bbZ$, we have
\begin{align*}
    \sum_{n=-\infty}^\infty a_{3m-1,3n-1} (\bkappa^*,\omega) \, c_{3n-1} &= \sum_{n=-\infty}^\infty a_{3m-1,-3n-1} (\bkappa^*,\omega) \, c_{-3n-1} \\
    & =  \sum_{n=-\infty}^\infty (-1)^{3m+3n} \overline{a_{-3m+1,3n+1}} (\bkappa^*,\omega) \cdot (-1)^{3n}  \overline{c_{3n+1}} = 0,
\end{align*}
and the system \eqref{eq:subsys3} holds. The converse can be shown similarly.
\end{proof}

\section{Dirichlet eigenvalue at $\bkappa=\bkappa^*$ for the low-frequency bands}\label{sec:omega1}
In this section, we focus on the lowest eigenvalue to the eigenvalue problem \eqref{eq:eigen_prob}-\eqref{eq:Dirichlet} when $\bkappa=\bkappa^*$. Based on the decomposition of the quasi-periodic Green's function $G(\bkappa, \omega, \bx)$ and the integral operator $\cS_\eps^{\bkappa,\omega}$, we decompose the matrix $\cA$ as $\cA=\cD+ \eps \, \cE$, wherein $\cD$ is a diagonal matrix. Such decomposition allows for reducing the subsystems \eqref{eq:subsys1}-\eqref{eq:subsys3} to three scalar nonlinear equations (characteristic equations). The eigenvalues are the roots of the characteristic equations and can be obtained by the asymptotic analysis.

\subsection{Decomposition of the Green's function and the single-layer operator $ \cS_\eps^{\bkappa,\omega}$ }
As we shall see, the lowest Dirichlet eigenvalue at $\bkappa=\bkappa^*$ lies in the neighborhood of the singular frequency $\bar\omega_1(\bkappa^*):=|\bkappa^*|$. To this end, we consider the following small neighborhood of $\bar\omega_1(\bkappa^*)$: 
\begin{equation}
    \Omega_\eps(\bkappa^*): = \left\{ \omega \in \mathbb R^+: \frac{1}{\abs{2Y}} \left(\frac{2\pi}{a}\right)^2\eps^2 \le \omega^2-\abs{\bkappa^*}^2 \le \dfrac{4\pi}{\abs{Y}\abs{\ln\eps}} \right\}.
\end{equation}
The lower and upper bounds are chosen such that there exist roots $\omega$ for the systems \eqref{eq:subsys1}-\eqref{eq:subsys3} in $\Omega_\eps(\bkappa^*)$.
Let $U_r:=\left\{\bx:|\bx|< r \right\}$ be disk centered at the origin with the radius $r$.

We denote
\begin{equation}\label{eq:Lstar1}
    \Lambda^*_0(\bar\omega_1):=\left\{\bq\in\Lambda^*: |\bkappa^*+\bq|=|\bkappa^*|\right\}.
\end{equation}
It can be solved that $\Lambda^*_0(\bar\omega_1)=\left\{\bq_1,\bq_2,\bq_3\right\}$,
where $\bq_1=(0,0)^T$, $\bq_2=\frac{2\pi}{a}(-\frac{2}{\sqrt3},0)^T$, and $\bq_3=\frac{2\pi}{a}(-\frac{1}{\sqrt3},-1)^T$.
We decompose the Green's function into the three parts as follows:
\begin{definition}[Decomposition of the Green's function]
Let
\begin{equation}\label{eq:GH}
 H_0(\omega; \bx):= -\frac{i}{4}  H_0^{(1)}(\omega|\bx|), \quad \bx \neq 0,
\end{equation}
\begin{equation}\label{eq:GL}
 \GL(\bkappa, \omega;\bx):=\frac{1}{|Y|}\sum_{\bq\in \Lambda_0^*(\bar\omega_1)} \frac{e^{i(\bkappa+\bq)\cdot\bx}}{\omega^2-|\bkappa+\bq|^2},  
\end{equation}
and 
\begin{eqnarray}
 \tG(\bkappa, \omega; \bx) &:=& G(\bkappa,\omega;\bx)-H_0(\omega;\bx)-\GL(\bkappa,\omega;\bx), \quad \bx \neq 0, \label{eq:tG} \\
  \tG(\bkappa, \omega; 0) &:=& \lim_{\bx \to 0 } \tG(\bkappa, \omega; \bx). \nonumber
\end{eqnarray}
\end{definition}

\begin{remark}
$H_0(\omega; \bx)$ is the free-space Green's function that satisfies $(\Delta +\omega^2)H_0(\bx)=\delta(\bx)$. Its asymptotic behavior for  $0< |\bx| \ll 1$ is well known and is given in the next lemma. In particular, $H_0(\omega; \bx) \approx \frac{1}{2\pi}\ln|\bx|$ as $|\bx|\to 0$.
\end{remark}
\begin{remark}
Given non-singular frequency $\omega \notin \Omega_{\text{sing}}(\bkappa)$, both $\GL(\bkappa, \omega;\bx)$ and $ \tG(\bkappa, \omega; \bx)$ are smooth functions in the neighborhood of $\bx=0$. However, their asymptotic behaviors are very different as $\omega$ approaches the singular frequency $\bar\omega_1(\bkappa^*)=|\bkappa^*|$.
More precisely, in this region, the finite-sum $\GL(\bkappa^*, \omega;\bx)$ attains the order $\frac{1}{\omega^2-|\bkappa^*|^2}$ and its value blows up as $\omega\to\bar\omega_1(\bkappa^*)$, while  $\tG(\bkappa^*,\omega;\bx)$ remains order $O(1)$ in the neighborhood of $\bx=0$ as $\omega\to \bar\omega_1$. In the decomposition,
we introduce $\GL$ to extract the singular behavior of the Green's function when $\omega$ is close to the singular frequency.
\end{remark}

\begin{lemma} [\cite{hsiao-wendland}, Section 2.1.1]
If $0< |\bx| \ll 1$, then
 \begin{equation}\label{eq:H0form}
H_0(\omega;\bx) =\frac{1}{2\pi}\left[\ln|\bx|+\ln\omega+\gamma_0+\ln(\omega|\bx|)\sum_{p\geq1}b_{p,1}(\omega|\bx|)^{2p}+\sum_{p\geq1}b_{p,2}(\omega|\bx|)^{2p}
 \right],
 \end{equation}
 where
 \begin{equation}
    b_{p,1} = \frac{(-1)^p}{2^{2p}(p!)^2}, \;b_{p,2} = \left(\gamma_0 - \sum_{s=1}^p\frac{1}{s}\right) b_{p,1},  \;  \gamma_0 = E_0\footnote{$E_0$ is the Euler constant given by $\displaystyle{E_0=\lim_{N\to\infty}\left(\sum_{p=1}^N \frac{1}{p}-\ln N\right)}$.} - \ln 2 - \frac{i\pi}{2}.
\end{equation}
\end{lemma}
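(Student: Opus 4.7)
The plan is to derive the stated expansion directly from the classical power series representations of the Bessel functions $J_0$ and $Y_0$, since $H_0^{(1)}(z) = J_0(z) + i Y_0(z)$ and $H_0(\omega;\bx) = -\frac{i}{4} H_0^{(1)}(\omega|\bx|) = \frac{1}{4}Y_0(\omega|\bx|) - \frac{i}{4}J_0(\omega|\bx|)$. The work is essentially bookkeeping: collect the logarithmic term, the Euler–constant term, and the $-i\pi/2$ term contributed by the complex factor $-\tfrac{i}{4}$ and reorganize them so that they assemble into $\gamma_0 = E_0 - \ln 2 - i\pi/2$.

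First, I will write
\begin{equation*}
J_0(z) = \sum_{p\geq 0} \frac{(-1)^p}{(p!)^2}\Bigl(\frac{z}{2}\Bigr)^{2p}, \qquad Y_0(z) = \frac{2}{\pi}\Bigl[\ln(z/2) + E_0\Bigr] J_0(z) - \frac{2}{\pi}\sum_{p\geq 1}\frac{(-1)^p}{(p!)^2}\Bigl(\sum_{s=1}^p \frac{1}{s}\Bigr)\Bigl(\frac{z}{2}\Bigr)^{2p},
\end{equation*}
both of which converge for all $z\in\bbC\setminus\{0\}$ (and in particular for $0<|\bx|\ll 1$ with $z=\omega|\bx|$). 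Substituting into $H_0(\omega;\bx)=\frac{1}{4}Y_0(\omega|\bx|)-\frac{i}{4}J_0(\omega|\bx|)$ and combining the $\frac{1}{4}Y_0$ prefactor $\tfrac{1}{2\pi}[\ln(\omega|\bx|/2)+E_0]$ with the $-\tfrac{i}{4}J_0$ contribution $-\tfrac{i\pi}{2}\cdot\tfrac{1}{2\pi}J_0(\omega|\bx|)$, the bracket becomes $\tfrac{1}{2\pi}[\ln|\bx|+\ln\omega-\ln 2 + E_0 - i\pi/2] = \tfrac{1}{2\pi}[\ln|\bx|+\ln\omega+\gamma_0]$ times $J_0(\omega|\bx|)$.

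Next, I would split the $p=0$ term of $J_0(\omega|\bx|)=1+\sum_{p\geq 1}b_{p,1}(\omega|\bx|)^{2p}$, where $b_{p,1}=\frac{(-1)^p}{2^{2p}(p!)^2}$ exactly matches the definition. The $p=0$ part yields the stand-alone term $\tfrac{1}{2\pi}[\ln|\bx|+\ln\omega+\gamma_0]$; the remaining $\sum_{p\geq 1}$ part multiplied by $[\ln|\bx|+\ln\omega+\gamma_0]$ is then separated into a $\ln(\omega|\bx|)\sum_{p\geq 1}b_{p,1}(\omega|\bx|)^{2p}$ piece and a $\gamma_0\sum_{p\geq 1}b_{p,1}(\omega|\bx|)^{2p}$ piece. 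Finally, I would merge this last piece with the remaining series $-\tfrac{1}{2\pi}\sum_{p\geq 1}b_{p,1}\bigl(\sum_{s=1}^p \tfrac{1}{s}\bigr)(\omega|\bx|)^{2p}$ coming from the second term of $Y_0$, whose common coefficient is precisely $\bigl(\gamma_0-\sum_{s=1}^p \tfrac{1}{s}\bigr) b_{p,1} = b_{p,2}$.

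There is no real obstacle here: this is a textbook identity (the same as in Hsiao–Wendland, Section 2.1.1), and the only subtlety is keeping track of the branch of the logarithm so that the imaginary constant $-i\pi/2$ arising from $-\tfrac{i}{4}\cdot 1 = -\tfrac{i\pi}{2}\cdot\tfrac{1}{2\pi}$ is correctly absorbed into $\gamma_0$. Convergence of the resulting series for all $\omega|\bx|$ is inherited from the absolute convergence of the $J_0$ and $Y_0$ series, so no further estimate is needed.
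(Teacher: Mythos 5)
Your derivation is correct: the bookkeeping with $H_0=\tfrac14 Y_0(\omega|\bx|)-\tfrac{i}{4}J_0(\omega|\bx|)$, the standard series for $J_0$ and $Y_0$, and the absorption of $-\ln 2$, $E_0$ and $-i\pi/2$ into $\gamma_0$ reproduces the stated coefficients $b_{p,1}$ and $b_{p,2}$ exactly, and with $z=\omega|\bx|>0$ there is no branch difficulty. The paper does not prove this lemma but simply cites Hsiao--Wendland, and your argument is precisely the standard textbook derivation behind that citation, so it is consistent with the paper's (implicit) proof.
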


\medskip

From the Taylor expansion of the finite-sum $\GL(\bx)$, we also have the following lemma.
\begin{lemma}\label{lem:tG_analyticity}
For each $\omega \in \Omega_\eps(\bkappa^*)$, $\GL(\bkappa^*,\omega;\bx)$ is analytic for $\bx\in U_{2\eps}$ and it possesses the Taylor expansion
\begin{equation}\label{eq:GL_exp}
     \GL(\bkappa^*,\omega;\bx)=\frac{1}{|Y|}\frac{1}{\omega^2-|\bkappa^*|^2} \left( 3 - \frac{1}{3}\left(\frac{2\pi}{a}\right)^2 |\bx|^2\right)+\GL^\infty(\bkappa^*,\omega;\bx),
\end{equation}
where 
\begin{equation}
    \GL^\infty(\bkappa^*,\omega;\bx)=\frac{1}{|Y|}\frac{1}{\omega^2-|\bkappa^*|^2} \sum_{|\alpha|\geq3}c_{\alpha}(\omega)x_1^{\alpha_1}x_2^{\alpha_2},
\end{equation} 
$\alpha=(\alpha_1,\alpha_2)$ and $|c_\alpha| < C^{|\alpha|}$ for certain constant $C$ independent of $\omega$, $\eps$ and $\alpha$.
\end{lemma}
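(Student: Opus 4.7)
The plan is a direct computation exploiting the fact that every $\bq \in \Lambda^*_0(\bar\omega_1)$ yields the same denominator. Setting $\bp_j := \bkappa^* + \bq_j$ for $j=1,2,3$, we have $|\bp_j| = |\bkappa^*|$ by the definition \eqref{eq:Lstar1}, so
\begin{equation*}
\GL(\bkappa^*,\omega;\bx) \;=\; \frac{1}{|Y|(\omega^2-|\bkappa^*|^2)} \sum_{j=1}^{3} e^{i\bp_j \cdot \bx}.
\end{equation*}
Each exponential $e^{i\bp_j \cdot \bx}$ is entire in $\bx$, so the finite sum is entire; in particular $\GL(\bkappa^*,\omega;\cdot)$ is analytic on $U_{2\eps}$ (and well-defined for $\omega \in \Omega_\eps(\bkappa^*)$ since then $\omega^2 - |\bkappa^*|^2 \neq 0$).

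Next, I would expand each exponential as a power series and collect terms by degree. The degree-zero contribution is obviously $3$. For the degree-one contribution $i(\bp_1+\bp_2+\bp_3)\cdot\bx$, the three explicit vectors computed earlier in the paper give $\bp_1 + \bp_2 + \bp_3 = \mathbf{0}$, a manifestation of the three-fold symmetry at the high-symmetry vertex $\bkappa^*$ (equivalently, $R\bp_1 = \bp_3$, $R\bp_3 = \bp_2$, $R\bp_2 = \bp_1$). For the degree-two contribution $-\tfrac{1}{2}\sum_j (\bp_j\cdot\bx)^2 = -\tfrac{1}{2}\,\bx^T \bigl(\sum_j \bp_j \bp_j^T\bigr)\bx$, the rotational invariance of $\{\bp_j\}$ under $R$ forces $\sum_j \bp_j \bp_j^T = \tfrac{3|\bp_1|^2}{2} I$. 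Plugging in $|\bp_1|^2 = |\bkappa^*|^2 = \tfrac{4}{9}(\tfrac{2\pi}{a})^2$ reproduces the quadratic term $-\tfrac{1}{3}(\tfrac{2\pi}{a})^2 |\bx|^2$. A direct coordinate calculation with the three $\bp_j$ given in the paper provides an explicit verification.

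All remaining terms are collected into $\GL^\infty$. The multi-index coefficients are
\begin{equation*}
c_\alpha \;=\; \sum_{j=1}^{3} \frac{(i p_{j,1})^{\alpha_1}(i p_{j,2})^{\alpha_2}}{\alpha_1! \, \alpha_2!},
\end{equation*}
which are independent of $\omega$ and $\eps$. The triangle inequality together with $|\bp_j| = |\bkappa^*|$ and $\alpha_1! \alpha_2! \geq 1$ yields $|c_\alpha| \leq 3\,|\bkappa^*|^{|\alpha|}$; for $|\alpha| \geq 3$ this is bounded by $C^{|\alpha|}$ with $C := \max(|\bkappa^*|, 3^{1/3})$, a constant independent of $\omega$, $\eps$, and $\alpha$, as required.

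The main obstacle is really none in a technical sense; the statement reduces to a direct power-series expansion. The only step that deserves care is the vanishing of the linear term and the isotropic form of the quadratic term, both of which follow from the three-fold rotational symmetry of $\{\bp_j\}$ about the origin—either by explicit coordinate computation or by the group-averaging argument $\sum_j R^{j-1}(\bp_1\bp_1^T)(R^T)^{j-1} = \tfrac{3|\bp_1|^2}{2}I$ on $\mathbb R^2$.
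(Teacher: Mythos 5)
Your proof is correct and follows the same route the paper intends: the paper simply asserts this lemma ``from the Taylor expansion of the finite-sum $\GL$'' without writing out details, and your explicit expansion of $\sum_{j}e^{i(\bkappa^*+\bq_j)\cdot\bx}$ — with the linear term vanishing and the quadratic term isotropic by the three-fold symmetry of $\{\bkappa^*+\bq_j\}$ under $R$ — supplies exactly those details. The only slip is the final constant: the step $3|\bkappa^*|^{|\alpha|}\le\max(|\bkappa^*|,3^{1/3})^{|\alpha|}$ fails when $|\bkappa^*|\ge 3^{1/3}$ (the typical case, e.g.\ $a=1$ gives $|\bkappa^*|=4\pi/3$), but taking $C=3^{1/3}|\bkappa^*|$, so that the factor $3\le 3^{|\alpha|/3}$ is absorbed for $|\alpha|\ge3$, repairs it immediately.
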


\begin{lemma}\label{lem:tG_analyticity}
For each $\omega \in \Omega_\eps(\bkappa^*)$, 
$\tG(\bkappa^*,\omega;\bx)$ is smooth for $\bx\in U_{2\eps}$.
In addition,
\begin{equation}\label{eq:tG_bnd}
\sup_{\omega\in \Omega_\eps(\bkappa^*)} \left|\partial^{\alpha_1}_{x_1}\partial^{\alpha_2}_{x_2}\tG(\bkappa^*,\omega;0)\right|\leq C,  \quad 0 \le \alpha_1+\alpha_2 \le 2,
\end{equation}
wherein the constant $C$ is independent of $\omega$, $\eps$.
\end{lemma}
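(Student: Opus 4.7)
The plan is to prove the smoothness and the uniform derivative bound separately.

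For smoothness on $U_{2\eps}$, I will combine the analyticity of $\GL$ on $U_{2\eps}$ provided by the preceding lemma with smoothness of $G - H_0$ on a fixed neighborhood of the origin. For the latter, substituting \eqref{eq:G_lattice} into the definition of $G$ and subtracting $H_0 = -\frac{i}{4}H_0^{(1)}(\omega|\bx|)$, I obtain
$$G(\bkappa^*,\omega;\bx) - H_0(\omega;\bx) = -\frac{i}{4}\sum_{\be\in\Lambda\setminus\{0\}}e^{i\bkappa^*\cdot\be}H_0^{(1)}(\omega|\bx-\be|).$$
Each summand is real-analytic on the disk $\{|\bx|<|\be|\}$, so every term is smooth on $U_d$ with $d = \min_{\be\neq 0}|\be|$ the minimum lattice spacing. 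For $\eps$ small enough that $2\eps < d$, termwise differentiation is justified by standard convergence estimates for the quasi-periodic Green's function series (an Ewald splitting, for instance, yields exponentially convergent spatial and spectral tails). Smoothness of $\tG$ on $U_{2\eps}$ then follows from the decomposition \eqref{eq:tG}.

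For the uniform derivative bound at the origin, the key observation is that $\tG$ satisfies the elliptic equation
$$(\Delta + \omega^2)\tG(\bkappa^*,\omega;\bx) = -\frac{1}{|Y|}\sum_{\bq\in\Lambda_0^*(\bar\omega_1)}e^{i(\bkappa^*+\bq)\cdot\bx}$$
on any open set disjoint from $\Lambda\setminus\{0\}$. This is verified by applying $\Delta + \omega^2$ termwise to the decomposition \eqref{eq:tG}, using \eqref{eq:Green_fun} for $G$, the fact that $H_0$ is a fundamental solution, and a direct computation from \eqref{eq:GL} for $\GL$. The right-hand side is smooth in $\bx$ with $C^k$-norms bounded by an absolute constant, uniformly in $\omega\in\Omega_\eps(\bkappa^*)$ and $\eps$. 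By interior elliptic regularity on a fixed ball $U_{r_0}$ with $r_0 < d/2$ (whose constants are uniform in $\omega$ since $\Omega_\eps(\bkappa^*)$ is contained in a bounded region of $\mathbb{R}^+$), the bound on derivatives at $\bx = 0$ reduces to an $\omega$-uniform $L^2$-estimate of $\tG$ on $U_{r_0}$.

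I produce this $L^2$-estimate through the spectral representation
$$\tG(\bkappa^*,\omega;\bx) = \frac{1}{|Y|}\sum_{\bq\in\Lambda^*\setminus\Lambda_0^*(\bar\omega_1)}\frac{e^{i(\bkappa^*+\bq)\cdot\bx}}{\omega^2 - |\bkappa^*+\bq|^2} - H_0(\omega;\bx),$$
where the removal of the three resonant modes in $\Lambda_0^*(\bar\omega_1)$ guarantees that each denominator is uniformly bounded away from zero for $\omega\in\Omega_\eps(\bkappa^*)$ (once $\eps$ is small enough). The main obstacle here is that this series is only conditionally convergent in two dimensions. I resolve this via the static regularization
$$\frac{1}{\omega^2 - |\bkappa^*+\bq|^2} = -\frac{1}{|\bkappa^*+\bq|^2} + \frac{\omega^2}{|\bkappa^*+\bq|^2(\omega^2 - |\bkappa^*+\bq|^2)},$$
in which the second piece yields an absolutely convergent $|\bq|^{-4}$ series with a uniform $\omega$-bound, while the first (static) piece, interpreted distributionally by Poisson summation, reassembles the static lattice Green's function with the three resonant modes removed, whose $\frac{1}{2\pi}\ln|\bx|$ singularity at the origin cancels that of $H_0(\omega;\bx)$ given by \eqref{eq:H0form}. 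The leftover $\omega$-dependent term $\frac{1}{2\pi}\ln\omega$ is bounded since $\omega$ lies in a bounded region of $\mathbb{R}^+$, and the remaining smooth parts deliver the required uniform $L^2$-bound, completing the proof.
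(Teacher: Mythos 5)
Your proof is correct, and it shares the paper's central device --- the observation that $(\Delta+\omega^2)\tG(\bkappa^*,\omega;\cdot)$ equals a smooth, $\omega$-uniformly bounded right-hand side on a neighborhood of the origin, followed by elliptic regularity --- but it handles the uniformity claim \eqref{eq:tG_bnd} by a genuinely different and more quantitative mechanism. The paper simply notes from the spectral representation \eqref{eq:G_spec_decomp} that $G(\bkappa^*,\omega;\cdot)-\GL(\bkappa^*,\omega;\cdot)$ is a family of distributions depending analytically on $\omega$ (the resonant modes having been removed), and lets this analyticity carry the uniform bound; you instead make the dependence explicit by reducing, via interior estimates with constants uniform in $\omega$, to an $\omega$-uniform $L^2$ bound of $\tG$ on a fixed ball, and you produce that bound through the static regularization $\frac{1}{\omega^2-|\bkappa^*+\bq|^2}=-\frac{1}{|\bkappa^*+\bq|^2}+\frac{\omega^2}{|\bkappa^*+\bq|^2(\omega^2-|\bkappa^*+\bq|^2)}$, whose second piece is absolutely summable uniformly in $\omega$ (since $|\bkappa^*+\bq|\ge 2|\bkappa^*|$ off $\Lambda_0^*(\bar\omega_1)$) and whose static piece, by Poisson summation, differs from $H_0$ by a bounded function near the origin. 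This buys you a self-contained, checkable proof of the uniformity in both $\omega$ and $\eps$, which the paper leaves largely implicit; as a bonus, your right-hand side $-\frac{1}{|Y|}\sum_{\bq\in\Lambda_0^*(\bar\omega_1)}e^{i(\bkappa^*+\bq)\cdot\bx}$ is the exact one, whereas the paper records only its value $-3/|Y|$ at $\bx=0$ (harmless, since either way the source term is smooth). Two small remarks: your opening paragraph based on the spatial lattice sum \eqref{eq:G_lattice} and Ewald splitting is dispensable --- once you know $\tG\in L^2$ near the origin (which your third step supplies) the PDE and elliptic regularity already give smoothness, exactly as in the paper --- and for the $L^2$ bound alone the cancellation of the $\frac{1}{2\pi}\ln|\bx|$ singularities is not even needed, since $\ln|\bx|$ is square-integrable and $\omega$ stays in a fixed bounded interval away from zero; neither point affects correctness.
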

\begin{proof}
For fixed $\bkappa^*$, from the spectral representation of the Green's function \eqref{eq:G_spec_decomp}, we see that $G(\bkappa^*,\omega;\cdot) - \GL(\bkappa^*,\omega;\cdot)$ is a family of distribution that depends on $\omega$ analytically for $\omega \in \Omega_\eps(\bkappa^*)$. So is the distribution $\tG(\bkappa^*,\omega, \cdot)=G(\bkappa^*,\omega, \cdot) - \GL(\bkappa^*,\omega, \cdot)-H_0(\cdot)$. 
On the other hand, in view of \eqref{eq:Green_fun}, there holds
$(\Delta +\omega^2)  \tG(\bkappa^*,\omega, \bx) = -\frac{3}{|Y|}$ for $\bx\in U_{2\eps}$ and $\omega \in \Omega_\eps(\bkappa^*)$, where we used the fact that 
there are three elments in the set $\Lambda^*_0(\bar\omega_1)$.
From the regularity theory for the solutions to the Helmholtz equation, we deduce that the distribution $\tG(\bkappa^*,\omega;\bx)$ is smooth in the domain $U_{2\eps}$. Hence $\tG(\bkappa^*,\omega;\bx)$ can be viewed as a family of smooth functions for $\bx\in U_{2\eps}$ that depends on the parameter $\omega$ analytically. This completes the proof of the lemma. 
\end{proof}

\begin{definition}[Decomposition of the single-layer operator $ \cS_\eps^{\bkappa,\omega}$]
Let $\SH$, $\SL$ and $\tS$ be the integral operators with the kernel $H_0$, $\GL$ and $\tG$ given in \eqref{eq:GH}-\eqref{eq:tG}:
\begin{equation*}
\begin{aligned}
    [\SH\varphi](\bx) :=& \int_{\by\in \partial D_1} H_0(\omega;\eps(\bx-\by)) \varphi(\by) \, ds_\by, \quad \bx\in\partial D_1, \\
   [\SL\varphi](\bx) :=& \int_{\by\in \partial D_1} \GL(\bkappa,\omega;\eps(\bx-\by)) \varphi(\by) \, ds_\by, \quad \bx\in\partial D_1,  \\
   [\tS\varphi](\bx) :=& \int_{\by\in \partial D_1} \tG(\bkappa,\omega;\eps(\bx-\by)) \varphi(\by) \, ds_\by, \quad \bx\in\partial D_1.  
\end{aligned}
\end{equation*}
\end{definition}

\subsection{Decomposition of the matrix $\cA$}
Recall that $a_{m,n}=\left( \phi_m, \cS_\eps^{\bkappa,\omega}\phi_n \right)$, using the decomposition of the integral operator $\cS_\eps^{\bkappa,\omega}$, we express $a_{m,n}$ as the sum of the following three terms:
\begin{equation}
\begin{aligned}
   (\SH)_{m,n}:=&\left( \phi_m, \SH\phi_n \right),\\
   (\SL)_{m,n}:=&\left( \phi_m, \SL\phi_n \right),\\
   (\tS)_{m,n}:=&\left( \phi_m, \tS\phi_n \right).\\
\end{aligned}
\end{equation}
In what follows, we obtain the asymptotic expansion of $(\SH)_{m,n}$, $(\SL)_{m,n}$, and $(\tS)_{m,n}$ to obtain a decomposition of the matrix $\cA$.

Define 
\begin{equation}
    \cS_0 \varphi(\bx) := \frac{1}{2\pi}\int_{\by\in \partial D_1} \ln|\bx-\by| \, \varphi(\by) \, ds_\by, \quad \bx\in\partial D_1. \label{eq:S0}
\end{equation}
\begin{lemma}\label{lem:S0}
The operator $\cS_0$ is bounded from $H^{-1/2}([0,2\pi])$ to $H^{1/2}([0,2\pi])$, and attains the eigenvalues $\{ \eta_n\}_{n=-\infty}^{\infty}$ and the eigenfunctions $\{ \phi_n\}_{n=-\infty}^{\infty}$ given by
\begin{equation*}
\eta_n = \left\{ 
\begin{array}{cc}
0, & n = 0;\\
-\frac{1}{2|n|}, & n\neq0; 
\end{array}
\right.
\quad \mbox{and} \quad \phi_n = \frac{1}{\sqrt{2\pi}}e^{int}.
\end{equation*}
\end{lemma}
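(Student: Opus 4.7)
The plan is to diagonalize $\cS_0$ explicitly in the Fourier basis $\{\phi_n\}_{n\in\bbZ}$ by exploiting the translation invariance of the kernel $\ln|\bx-\by|$ on the unit circle; once this is done, both the eigenvalue/eigenfunction claim and the mapping property $H^{-1/2}\to H^{1/2}$ drop out, the latter from the $O(1/|n|)$ decay of the eigenvalues.

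First, with the parametrization $\bx(t)=(\cos t,\sin t)$ and $\by(\tau)=(\cos\tau,\sin\tau)$, I would use the elementary identity
$$|\bx(t)-\by(\tau)|^2 \;=\; 2-2\cos(t-\tau) \;=\; 4\sin^2\!\left(\tfrac{t-\tau}{2}\right),$$
which gives $\ln|\bx(t)-\by(\tau)| = \ln 2 + \ln\bigl|\sin\bigl(\tfrac{t-\tau}{2}\bigr)\bigr|$. Next I would invoke the classical Fourier series
$$\ln\!\Bigl|\sin\!\Bigl(\tfrac{\theta}{2}\Bigr)\Bigr| \;=\; -\ln 2 \;-\; \sum_{n=1}^{\infty}\frac{\cos(n\theta)}{n},$$
valid in the $L^2([0,2\pi])$ sense, and rewrite the cosines via complex exponentials to obtain
$$\ln|\bx(t)-\by(\tau)| \;=\; -\sum_{n\neq 0} \frac{e^{in(t-\tau)}}{2|n|}.$$
Substituting this expansion into $\cS_0\phi_n(t) = \frac{1}{2\pi}\int_0^{2\pi}\ln|\bx(t)-\by(\tau)|\,\phi_n(\tau)\,d\tau$ and using the orthogonality relation $\int_0^{2\pi} e^{i(n-m)\tau}\,d\tau = 2\pi\delta_{m,n}$, termwise integration immediately yields $\cS_0\phi_n = -\tfrac{1}{2|n|}\phi_n$ for $n\neq 0$, while $\cS_0\phi_0 = 0$ because the kernel expansion contains no constant Fourier mode.

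Finally, for the mapping property, I would use that $\{\phi_n\}_{n\in\bbZ}$ is a complete orthonormal basis of $L^2([0,2\pi])$ diagonalizing $\cS_0$, together with the standard characterization $\|\sum_n c_n\phi_n\|_{H^s}^2 = \sum_n (1+n^2)^s|c_n|^2$. The uniform estimate $(1+n^2)^{1/2}|\eta_n|^2 = (1+n^2)^{1/2}/(4n^2) \le C(1+n^2)^{-1/2}$ for every $n\neq 0$ (and $\eta_0=0$) then gives
$$\|\cS_0\varphi\|_{H^{1/2}}^2 \;=\; \sum_{n\neq 0}(1+n^2)^{1/2}|\eta_n|^2|c_n|^2 \;\le\; C\sum_n (1+n^2)^{-1/2}|c_n|^2 \;=\; C\|\varphi\|_{H^{-1/2}}^2.$$

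The only mild subtlety, and what I would treat as the main obstacle, is justifying the termwise interchange of summation and integration when pairing the Fourier expansion of the logarithmic kernel against a general $\varphi\in H^{-1/2}$. This is standard: $L^2$ convergence of the Fourier series of $\ln|\sin(\cdot/2)|$ on $[0,2\pi]$ combined with Parseval's identity makes the interchange rigorous, or alternatively one can invoke Young's convolution inequality since the kernel is integrable on the circle, so the computation reduces to an ordinary convolution on the torus whose Fourier multiplier is read off directly.
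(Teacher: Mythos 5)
Your proof is correct and follows essentially the same route as the paper: the paper uses the same identity $|\br(t)-\br(\tau)|^2=4\sin^2\left(\frac{t-\tau}{2}\right)$ and then cites Lemma 8.23 of Kress, which is exactly the Fourier diagonalization of the logarithmic kernel that you derive explicitly via the classical series for $\ln\left|\sin\left(\frac{\theta}{2}\right)\right|$. Your version is simply self-contained, and additionally spells out the $H^{-1/2}\to H^{1/2}$ bound from the eigenvalue decay, which the paper leaves implicit.
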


\begin{proof}
On the unit circle,  there holds $\abs{\br(t)-\br(\tau)}^2 = 2-2\cos(t-\tau)=4\sin^2\left(\frac{t-\tau}{2}\right)$.
As such
\begin{equation*}
    [\cS_0 \phi_n](t) = \frac{1}{4\pi} \int_0^{2\pi} \ln\left(4\sin^2\left(\frac{t-\tau}{2}\right)\right) \phi_n(\tau) \, d\tau = \eta_n \phi_n(t),
\end{equation*}
where we use Lemma 8.23 in \cite{kress}.
\end{proof}

\begin{lemma}\label{lem:est1}
For $\eps\ll1$, there holds
\begin{equation}\label{eq:SHest}
   (\SH)_{m,n}=
   \begin{cases}
   \dfrac{1}{2\pi}(\ln\eps+\ln\omega+\gamma_0)+O(\eps^2\ln\eps) \quad &m=n=0, \\
   -\dfrac{1}{2|n|}\left(1+O(\eps^2\ln\eps)\right)\quad &m=n\neq0,\\
   0 \quad &m\neq n.
   \end{cases}
\end{equation}
\end{lemma}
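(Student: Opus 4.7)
My plan is to substitute the small-argument expansion \eqref{eq:H0form} into $(\phi_m,\SH\phi_n)$ and match the three arising types of terms with the three cases in the statement. Parametrizing $\partial D_1$ by $\br(t)=(\cos t,\sin t)$ one has $|\br(t)-\br(\tau)|\le 2$, so for every $\omega\in\Omega_\eps(\bkappa^*)$ the argument $\omega\eps|\br(t)-\br(\tau)|$ of the Hankel function stays uniformly small. The expansion then splits the kernel of $\SH$ as
$$
H_0(\omega;\eps(\br(t)-\br(\tau)))=\frac{1}{2\pi}(\ln\eps+\ln\omega+\gamma_0)+\frac{1}{2\pi}\ln|\br(t)-\br(\tau)|+r_\eps(t,\tau),
$$
where $r_\eps$ collects all $p\ge 1$ terms from \eqref{eq:H0form}. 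Each such term carries a factor $(\omega\eps|\br(t)-\br(\tau)|)^{2p}$ together with at most a $\log(\omega\eps|\br(t)-\br(\tau)|)$ weight, so $\sup_{t,\tau\in[0,2\pi]}|r_\eps(t,\tau)|=O(\eps^2\ln\eps)$ uniformly in $\omega\in\Omega_\eps(\bkappa^*)$.

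Correspondingly, I would decompose $\SH$ into (i) a rank-one operator with constant kernel $\tfrac{1}{2\pi}(\ln\eps+\ln\omega+\gamma_0)$, (ii) the operator $\cS_0$ defined in \eqref{eq:S0}, and (iii) a remainder operator $\cR_\eps$ with kernel $r_\eps$. The matrix elements of (i) vanish unless $m=n=0$, because $\int_0^{2\pi}\phi_n(\tau)\,d\tau=\sqrt{2\pi}\,\delta_{n,0}$; the single surviving entry furnishes the leading term of the first case in the lemma. Lemma \ref{lem:S0} diagonalizes (ii) so that $(\phi_m,\cS_0\phi_n)=\eta_n\delta_{m,n}$, contributing $0$ on the $(0,0)$ entry and $-\tfrac{1}{2|n|}$ on the $n$-th diagonal. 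Off-diagonal entries receive no contribution from either (i) or (ii).

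The $O(\eps^2\ln\eps)$ corrections all come from $\cR_\eps$, via the crude $L^\infty$ bound $|(\phi_m,\cR_\eps\phi_n)|\le (2\pi)\|r_\eps\|_{L^\infty}\|\phi_m\|_{L^\infty}\|\phi_n\|_{L^\infty}=O(\eps^2\ln\eps)$. For the $(0,0)$ case this gives the announced absolute error; for $m=n\neq 0$, normalizing against the leading $-\tfrac{1}{2|n|}$ from $\cS_0$ produces the relative $(1+O(\eps^2\ln\eps))$ form; for $m\neq n$ the same $O(\eps^2\ln\eps)$ is the full content of the matrix element and is recorded as $0$ at leading order. The main, though routine, technical point is to ensure that $\sup_{t,\tau}|r_\eps(t,\tau)|=O(\eps^2\ln\eps)$ holds uniformly in $\omega\in\Omega_\eps(\bkappa^*)$; this follows from the uniform boundedness of the coefficients $b_{p,j}$ and the factors $\omega^{2p}$ on the compact range of $\omega$ near $|\bkappa^*|$, together with the convergence of the series in \eqref{eq:H0form} since $\omega\eps|\br(t)-\br(\tau)|\le 2\omega\eps\ll 1$.
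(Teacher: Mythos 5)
There is a genuine gap, in two places, both stemming from your crude entrywise $L^\infty$ bound on the remainder. First, the lemma asserts $(\SH)_{m,n}=0$ \emph{exactly} for $m\neq n$, not merely $O(\eps^2\ln\eps)$: since $|\br(t)-\br(\tau)|=2\left|\sin\frac{t-\tau}{2}\right|$, the whole kernel $H_0(\omega;\eps(\br(t)-\br(\tau)))$ — remainder $r_\eps$ included — is a function of $t-\tau$ alone, i.e.\ $\SH$ is a convolution operator on the circle, so the Fourier basis diagonalizes it and every off-diagonal entry vanishes identically. Your statement that for $m\neq n$ the $O(\eps^2\ln\eps)$ bound ``is the full content of the matrix element'' proves a strictly weaker claim than the one in the lemma and misses this structural point (which is exactly how the paper disposes of the off-diagonal case in one line).

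Second, for $m=n\neq0$ the lemma claims $-\frac{1}{2|n|}\bigl(1+O(\eps^2\ln\eps)\bigr)$, i.e.\ an error of size $\frac{1}{2|n|}O(\eps^2\ln\eps)$ with the implied constant uniform in $n$. Your bound $|(\phi_n,\cR_\eps\phi_n)|\le C\eps^2|\ln\eps|$ is $n$-independent in absolute size, so dividing by the leading term $-\frac{1}{2|n|}$ produces a relative error of order $|n|\,\eps^2\ln\eps$, not $\eps^2\ln\eps$; the claimed factorized form does not follow. The paper obtains the crucial $1/|n|$ gain by integrating by parts in the mode integrals $I_{1,p,n}$ and $I_{2,p,n}$ (the kernel remainder is smooth in $t-\tau$ up to the harmless $x^{2p}\ln x$ factors, so one integration by parts against $e^{int}$ is legitimate and yields the $\frac{1}{|n|}$ decay, summable in $p$). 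This decay is not cosmetic: it is what later allows the error matrix $\cE$ in Proposition \ref{prop:decomp_A} to be bounded from $\mathbb{H}^{-1/2}$ to $\mathbb{H}^{1/2}$, a gain of one order of smoothness that a diagonal error with $n$-independent entries would not provide. Your treatment of the constant part and of $\cS_0$ via Lemma \ref{lem:S0}, and the uniformity in $\omega\in\Omega_\eps(\bkappa^*)$ of the $O(\eps^2\ln\eps)$ sup bound, are fine and coincide with the paper; the proof needs to be repaired at the two points above.
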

\begin{proof}

Using the expansion \eqref{eq:H0form}, we have
\begin{equation*}
H_0(\omega,\varepsilon(\bx-\by)) =\frac{1}{2\pi}\left(\ln\varepsilon+\ln\omega+\gamma_0+\ln|\bx-\by| \right) +  H_0^\infty(\omega;\eps(\bx-\by)),
\end{equation*}
wherein
\begin{equation}
    H_0^\infty(\omega;\eps(\bx-\by)):=\frac{1}{2\pi}\left[\ln(\omega\eps|\bx-\by|)\sum_{p\geq1}b_{p,1}(\omega\eps|\bx-\by|)^{2p}+\sum_{p\geq1}b_{p,2}(\omega\eps|\bx-\by|)^{2p}\right].
\end{equation}
Note that $|\br(t)-\br(\tau)|^2=4\sin^2\left(\frac{t-\tau}{2}\right)$, we obtain $(\SH)_{m,n}=0$ for $m\neq n$.

To estimate $(\SH)_{n,n}$, from Lemma \ref{lem:S0} it is straightforward that
\begin{equation}\label{eq:phin_ln_phin}
    \frac{1}{2\pi}\int_0^{2\pi}\int_0^{2\pi}\overline{\phi_n(t)}\ln|\br(t)-\br(\tau)| \phi_n(\tau)d\tau dt=\eta_n.
\end{equation}
Now consider the following two integrals for $p\geq1$:
\begin{equation}
\begin{aligned}
    I_{1,p,n}:=&\int_0^{2\pi}\int_0^{2\pi}\overline{\phi_n(t)}|\br(t)-\br(\tau)|^{2p}\ln|\br(t)-\br(\tau)| \phi_n(\tau)d\tau dt\\
    =&\int_0^{2\pi}\frac{1}{2}\left(2-2\cos t\right)^p\ln\left(2-2\cos t\right)e^{int}\frac{dt}{2\pi}; \\
    I_{2,p,n}:=& \int_0^{2\pi}\int_0^{2\pi}\overline{\phi_n(t)}|\br(t)-\br(\tau)|^{2p} \phi_n(\tau)d\tau dt=\int_0^{2\pi}\left(2-2\cos t\right)^pe^{int}\frac{dt}{2\pi}.
    \end{aligned}
\end{equation}
When $n=0$, there holds
\begin{equation*}
    |I_{1,p,0}| \leq \frac{4^p}{2}\int_0^{2\pi}|\ln\left(2-2\cos t\right)|\frac{dt}{2\pi}= C_1\frac{4^p}{2}, \quad
    |I_{2,p,0}| \leq 4^p.
\end{equation*}
Here 
 $  \displaystyle {C_1:=\int_0^{2\pi}|\ln\left(2-2\cos t)\right)|\frac{dt}{2\pi} } $ is a finite constant.
When $n\neq0$, integrating by parts yields
\begin{equation*}
\begin{aligned}
    I_{1,p,n}&=-\frac{1}{2in}\int_0^{2\pi}[2p\sin t\left(2-2\cos t\right)^{p-1}\ln\left(2-2\cos t\right)+2\sin t\left(2-2\cos t\right)^{p-1}]e^{int}\frac{dt}{2\pi}, \\
    I_{2,p,n}&=-\frac{1}{in}\int_0^{2\pi}2p\sin t\left(2-2\cos t\right)^{p-1}e^{int}\frac{dt}{2\pi}.
\end{aligned}
\end{equation*}
It follows that
\begin{equation*}
    |I_{1,p,n}|\leq\frac{1}{2|n|}(2C_1p+2)4^{p-1}, \quad
    |I_{2,p,n}|\leq\frac{p}{2|n|}4^p.
\end{equation*}
Thus there exists a constant $C_2>0$ such that for all $0<\eps<\min(\frac{a}{4},\frac{3}{4|\bkappa^*|})$ and all $\omega\in \Omega_\eps(\bkappa^*)$,
\begin{equation}
\begin{aligned}\label{eq:phin_H0_inf_phin}
   \left|(\phi_m,H_0^\infty \phi_n)\right|&=\left|\frac{1}{2\pi}\sum_{p\geq1}(\eps\omega)^{2p}b_{1,p}I_{1,p,n}
   +\frac{1}{2\pi}\sum_{p\geq1} \left[\ln(\eps\omega) +\gamma_0-\sum_{s=1}^p\frac{1}{s}\right] (\eps\omega)^{2p}b_{1,p}I_{2,p,n}\right|\\
   &\leq
   \begin{cases}
   C_2\eps^2\ln\eps\quad &n=0, \\
   \frac{1}{2|n|}C_2\eps^2\ln\eps\quad &|n|\neq0.
   \end{cases}
\end{aligned}
\end{equation}
The proof is complete by combining \eqref{eq:phin_ln_phin} and \eqref{eq:phin_H0_inf_phin}.
\end{proof}

\begin{lemma}\label{lem:est2}
Let $\bkappa=\bkappa^*$. For $\eps\ll 1$ and $\omega\in \Omega_\eps(\bkappa^*)$, there holds
\begin{equation}\label{eq:SLest}
   (\SL)_{m,n}=
   \begin{cases}
   \dfrac{1}{|Y|}\dfrac{1}{\omega^2-|\bkappa^*|^2}\left(3- \dfrac{4\pi\eps^2}{3}\left(\dfrac{2\pi}{a}\right)^2+O(\eps^3)\right) \quad &m=n=0, \\
   \dfrac{1}{|Y|}\dfrac{1}{\omega^2-|\bkappa^*|^2}
  \left(\dfrac{2\pi\eps^2}{3}\left(\dfrac{2\pi}{a}\right)^2+O(\eps^3)
  \right) \quad &m=n=\pm1\\
   \dfrac{1}{|Y|}\dfrac{1}{\omega^2-|\bkappa^*|^2}\, O\left(\eps^{\max(3,|m|,|n|)}\right) \quad &\text{otherwise}.
   \end{cases},
\end{equation}
\end{lemma}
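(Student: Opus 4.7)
The plan is to exploit the decomposition of $\GL$ given by Lemma \ref{lem:tG_analyticity}: a singular scalar $1/(\omega^2-|\bkappa^*|^2)$ times an analytic factor that splits into a constant $3$, a quadratic piece $-\tfrac{1}{3}(2\pi/a)^2|\bx|^2$, and a tail $\GL^\infty$ whose monomials all have total degree at least $3$ and coefficients controlled by $|c_\alpha|\le C^{|\alpha|}$ uniformly in $\omega\in\Omega_\eps(\bkappa^*)$. After substituting $\bx=\eps(\br(t)-\br(\tau))$ with $\br(t)=(\cos t,\sin t)$ and using $|\br(t)-\br(\tau)|^2=2(1-\cos(t-\tau))$, each of the three pieces gives an explicit Fourier computation of $(\phi_m,\SL\phi_n)$ on $[0,2\pi]^2$ that is naturally concentrated on a definite set of modes $(m,n)$.

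The constant term $3$ produces a kernel independent of $t,\tau$, so its pairing with $\overline{\phi_m(t)}\phi_n(\tau)$ vanishes unless $m=n=0$, producing the leading $3$ in the $(0,0)$ entry. The quadratic term, rewritten as $-\tfrac{2}{3}(2\pi/a)^2\eps^2(1-\cos(t-\tau))$ and then as $-\tfrac{2}{3}(2\pi/a)^2\eps^2\bigl(1-\tfrac12 e^{i(t-\tau)}-\tfrac12 e^{-i(t-\tau)}\bigr)$, has Fourier support only on the diagonal pairs $(0,0)$ and $(\pm1,\pm1)$. Evaluating those Fourier pairings directly yields the coefficients $-\tfrac{4\pi\eps^2}{3}(2\pi/a)^2$ at $(0,0)$ and $\tfrac{2\pi\eps^2}{3}(2\pi/a)^2$ at $(\pm1,\pm1)$ stated in the lemma.

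The remainder $\GL^\infty$ is handled by a degree/frequency observation. For each multi-index $\alpha$ with $|\alpha|\ge 3$, the monomial $x_1^{\alpha_1}x_2^{\alpha_2}$ evaluated at $\eps(\br(t)-\br(\tau))$ equals $\eps^{|\alpha|}(\cos t-\cos\tau)^{\alpha_1}(\sin t-\sin\tau)^{\alpha_2}$, and a binomial expansion shows this is a trigonometric polynomial of degree at most $|\alpha|$ in $t$ and at most $|\alpha|$ in $\tau$ separately. Hence its pairing against $\overline{\phi_m(t)}\phi_n(\tau)$ vanishes unless $|m|\le|\alpha|$ and $|n|\le|\alpha|$. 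Combining this with $|c_\alpha|\le C^{|\alpha|}$ and the crude geometric bound $|\eps(\br(t)-\br(\tau))|\le 2\eps$, and choosing $\eps$ small enough that $C\eps<1/2$, the surviving terms sum to
\begin{equation*}
\frac{1}{|Y|\,(\omega^2-|\bkappa^*|^2)}\sum_{|\alpha|\ge\max(3,|m|,|n|)}O\!\left((C\eps)^{|\alpha|}\right)
= \frac{1}{|Y|\,(\omega^2-|\bkappa^*|^2)}\,O\!\left(\eps^{\max(3,|m|,|n|)}\right),
\end{equation*}
which yields the third branch of the lemma and simultaneously produces the $O(\eps^3)$ corrections attached to the leading terms of the first two branches.

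The main delicate point is the degree/frequency correspondence used in the tail analysis: one must verify that the $t$- and $\tau$-Fourier bands of $(\cos t-\cos\tau)^{\alpha_1}(\sin t-\sin\tau)^{\alpha_2}$ are each bounded by $|\alpha|$, and that cross terms between the two variables do not inflate these bands. This is the structural ingredient that converts the purely geometric bound $(C\eps)^{|\alpha|}$ into the mode-dependent decay $\eps^{\max(3,|m|,|n|)}$ required by the statement; once it is in place, the rest of the argument reduces to the two elementary Fourier computations described above.
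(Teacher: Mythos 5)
Your proposal is correct and follows essentially the same route as the paper's proof: Taylor-expand $\GL$ as in the preceding lemma, obtain the $(0,0)$ and $(\pm1,\pm1)$ entries from the constant and quadratic pieces via $|\br(t)-\br(\tau)|^2=2-2\cos(t-\tau)$ and explicit Fourier pairing, and control the degree-$\geq 3$ tail by noting that $(\cos t-\cos\tau)^{\alpha_1}(\sin t-\sin\tau)^{\alpha_2}$ is a trigonometric polynomial of band at most $|\alpha|$ in each variable, combined with $|c_\alpha|\le C^{|\alpha|}$ and geometric summation. The only difference is that you write out the Fourier computations and the degree/frequency vanishing argument that the paper compresses into a single stated ``fact.''
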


\begin{proof}
Using the expansion \eqref{eq:GL_exp}, there holds
\begin{equation*}
     \GL(\bkappa^*,\omega;\eps(\bx-\by))=\frac{1}{|Y|}\frac{1}{\omega^2-|\bkappa^*|^2} \left( 3 - \frac{\eps^2}{3}\left(\frac{2\pi}{a}\right)^2 |\bx-\by|^2\right)+\GL^\infty(\bkappa^*,\omega;\eps(\bx-\by)).
\end{equation*}
Then \eqref{eq:SLest} follows by using the relation $|\br(t)-\br(\tau)|^2=2-2\cos(t-\tau)$
and the fact
\begin{equation*}
    \left|\int_0^{2\pi}\int_0^{2\pi}\overline{\phi_m(\tau)}(\sin t-\sin\tau)^{\alpha_1}(\cos t-\cos\tau)^{\alpha_2}\phi_n(t)d\tau dt\right|\leq 
    \begin{cases}
    0\; &|m|\geq|\alpha| \text{ or }|n|\geq|\alpha|, \\
    2\pi\cdot 4^{|\alpha|}\; &\text{otherwise}.
    \end{cases}
\end{equation*}
\end{proof}

\begin{lemma}\label{lem:est3}
Let $\bkappa=\bkappa^*$. For $\eps\ll1$ and $\omega\in \Omega_\eps(\bkappa^*)$, $(\tS)_{m,n}$ can be expressed as
\begin{equation}\label{eq:tSest}
   (\tS)_{m,n}=
   \begin{cases}
   \tG(\bkappa^*,\omega;\mathbf{0})+\eps \cdot \tilde{a}_{0,0} \quad &m=n=0, \\
   \eps \cdot \tilde{a}_{m,n} \quad &\text{otherwise},
   \end{cases}
\end{equation}
where the operator $\tilde\cA=[\tilde{a}_{m,n}]$ is bounded from $\mathbb{H}^{-1/2}$ to $\mathbb{H}^{1/2}$, and the operator norm $\norm{\tilde\cA}\le C$ with $C$ independent of $\eps$ and $\omega$.
\end{lemma}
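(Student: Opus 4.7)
The plan is to isolate the leading constant term $\tG(\bkappa^*,\omega;\bzero)$ by a first-order Taylor expansion of $\tG$ at the origin, and then to reduce the uniform operator bound on $\tilde\cA$ to a uniform $C^1$-estimate on the resulting remainder kernel. Since $\tG(\bkappa^*,\omega;\cdot)$ is smooth on $U_{2\eps}$ by Lemma \ref{lem:tG_analyticity} and $\eps(\br(t)-\br(\tau))\in U_{2\eps}$ for $t,\tau\in[0,2\pi]$, I would write
\begin{equation*}
\tG(\bkappa^*,\omega;\eps(\br(t)-\br(\tau))) = \tG(\bkappa^*,\omega;\bzero) + \eps\, K_\eps(t,\tau;\omega),
\end{equation*}
where
\begin{equation*}
K_\eps(t,\tau;\omega) := \int_0^1 \nabla\tG\bigl(\bkappa^*,\omega;s\eps(\br(t)-\br(\tau))\bigr)\cdot(\br(t)-\br(\tau))\,ds.
\end{equation*}
Substituting this into the definition of $(\tS)_{m,n}$ and using the orthogonality of the Fourier modes $\{\phi_n\}_{n\in\bbZ}$ on $[0,2\pi]$, the contribution of the constant $\tG(\bkappa^*,\omega;\bzero)$ collapses to the $(0,0)$ entry, while the remainder produces $\eps\tilde a_{m,n}$, with $\tilde a_{m,n}$ being precisely the Galerkin matrix entries of the integral operator with kernel $K_\eps$ in the basis $\{\phi_n\}_{n\in\bbZ}$.

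Granted the uniform bound $\|K_\eps\|_{C^1([0,2\pi]\times[0,2\pi])}\le C$, the required operator estimate $\|\tilde\cA\|_{\mathbb{H}^{-1/2}\to\mathbb{H}^{1/2}}\le C'$ would follow from a standard interpolation/duality argument. Pointwise bounds on $K_\eps$ and $\partial_t K_\eps$ combined with Cauchy--Schwarz yield that the integral operator with kernel $K_\eps$ maps $L^2([0,2\pi])\to H^1([0,2\pi])$ with uniform norm; applying the same reasoning to the adjoint operator, whose kernel $\overline{K_\eps(\tau,t;\omega)}$ enjoys the same $C^1$-bound, and then dualizing gives a bounded map $H^{-1}([0,2\pi])\to L^2([0,2\pi])$. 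Complex interpolation between these two endpoints delivers a uniformly bounded map $H^{-1/2}([0,2\pi])\to H^{1/2}([0,2\pi])$, which translated into Fourier coefficients is exactly the claimed estimate on $\tilde\cA$.

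The main obstacle is the uniform $C^1$-bound on $K_\eps$: after differentiating under the integral sign, this reduces to uniform pointwise bounds on $\nabla\tG$ and $D^2\tG$ throughout $U_{2\eps}$, whereas Lemma \ref{lem:tG_analyticity} as stated provides such control only at $\bx=\bzero$. To close this gap I would exploit the two structural properties already used to prove Lemma \ref{lem:tG_analyticity}: namely, that $\tG(\bkappa^*,\omega;\cdot)$ satisfies the Helmholtz equation $(\Delta+\omega^2)\tG=-3/|Y|$ on a fixed neighborhood of $\bzero$ which strictly contains $U_{2\eps}$ for all sufficiently small $\eps$, and that it depends analytically on $\omega$ over an open set enclosing every $\Omega_\eps(\bkappa^*)$. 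Interior elliptic regularity applied on the fixed larger disk, combined with Cauchy-type estimates in the $\omega$-variable, promotes the pointwise bound at $\bzero$ to uniform pointwise bounds on all relevant derivatives of $\tG$ throughout $U_{2\eps}$. With these estimates in hand, the uniform $C^1$-control on $K_\eps$ follows at once and the operator bound on $\tilde\cA$ is obtained as sketched above.
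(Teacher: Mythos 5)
Your proposal is correct and follows essentially the same route as the paper: the paper likewise writes $\tG(\bkappa^*,\omega;\eps(\bx-\by))=\tG(\bkappa^*,\omega;\mathbf 0)+\eps\,\tG_\infty(\bkappa^*,\omega;\eps(\bx-\by))$ and concludes the uniform $\mathbb{H}^{-1/2}\to\mathbb{H}^{1/2}$ bound from the uniform smoothness of the remainder kernel, which it obtains exactly from the Helmholtz-equation/elliptic-regularity and analytic-in-$\omega$ argument you invoke. Your interpolation--duality step and your explicit extension of the derivative bounds from $\bx=\mathbf 0$ to all of $U_{2\eps}$ merely spell out details the paper asserts without elaboration.
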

\begin{proof}
From the analyticity of $\tG(\bkappa^*,\omega;\bx)$, for each $\omega\in \Omega_\eps(\bkappa^*)$,
we can write $\tG(\bkappa^*,\omega;\eps(\bx-\by))$ as
\begin{equation*}
     \tG(\bkappa^*,\omega;\eps(\bx-\by))=\tG(\bkappa^*,\omega;\mathbf{0})+ \eps \cdot  \tG_\infty(\bkappa^*,\omega;\eps(\bx-\by))
\end{equation*}
for certain function
$\tG_\infty(\bkappa^*,\omega;\eps(\bx-\by))$ that is smooth for $\bx, \by \in\partial D_1$. 
In addition, from \eqref{eq:tG_bnd}, $ \tG_\infty(\bkappa^*,\omega;\eps(\bx-\by))$ together with its first order partial derivatives with respect to $\bx$, $\by$ are all uniformly bounded for $\omega\in \Omega_\eps(\bkappa^*)$. Therefore the following operator 
\begin{eqnarray*}
 \tilde{\cS}_{\eps,\infty}\varphi(\bx) := \int_{\by\in \partial D_1} \tG_\infty (\bkappa,\omega;\eps(\bx-\by)) \varphi(\by) \, ds_\by, \quad \bx\in\partial D_1,
\end{eqnarray*}
is bounded from $H^{-1/2}(\partial D_1)$ to $H^{1/2}(\partial D_1)$. 
Let
$\tilde{a}_{m,n} := \left( \phi_m, \tilde{\cS}_{\eps,\infty}\phi_n \right)$,
with $\tilde{\cS}_{\eps,\infty}$ being the operator above in the parameter space.
Then $\tilde{A}=[\tilde{a}_{m,n}]$ is bounded from $\mathbb{H}^{-1/2}$ to $\mathbb{H}^{1/2}$. This completes the proof of the lemma. 
\end{proof}

Note that for $\omega\in \Omega_\eps(\bkappa^*)$,
\begin{equation*}
   C_1|\ln\eps| \leq\frac{1}{\omega^2-|\bkappa^*|^2}\leq C_2\frac{1}{\eps^2}.
\end{equation*}
Therefore, by virtue of Lemmas \ref{lem:est1} - \ref{lem:est3}, we obtain the following decomposition for the matrix $\cA$:
\begin{proposition}[Decomposition of $\cA$] \label{prop:decomp_A}
Let $\bkappa=\bkappa^*$. There exists a constant $\sfc>0$ such that for $\eps\in (0,\sfc)$ and $\omega\in \Omega_\eps(\bkappa^*)$, the matrix $\cA$ can be decomposed as $\cA=\cD+ \eps \, \cE$, where
$\cD:=\diag(d_n)_{n\in\mathbb Z}$ with
\begin{equation*}
   d_n=
   \begin{cases}
  \dfrac{1}{2\pi}(\ln\eps+\ln\omega+\gamma_0))+\dfrac{1}{|Y|}\dfrac{1}{\omega^2-|\bkappa^*|^2}\left(3- \dfrac{4\pi\eps^2}{3}\left(\dfrac{2\pi}{a}\right)^2\right)+\tG(\bkappa^*,\omega;\mathbf{0}) & n=0, \\
    -\dfrac{1}{2}+\dfrac{1}{|Y|}\dfrac{1}{\omega^2-|\bkappa^*|^2}\dfrac{2\pi\eps^2}{3}\left(\dfrac{2\pi}{a}\right)^2, & n=\pm1, \\
   -\dfrac{1}{2|n|}, & |n|>1,
   \end{cases}
\end{equation*}
and $\cE=[e_{m,n}]$. In addition, $\cE$ is bounded from $\mathbb{H}^{-1/2}$ to $\mathbb{H}^{1/2}$ and $\norm{\cE}\le C$ for certain constant $C$ independent of $\eps$ and $\omega$.
\end{proposition}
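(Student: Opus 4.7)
The proof is a direct assembly of the three asymptotic expansions derived in Lemmas \ref{lem:est1}, \ref{lem:est2}, and \ref{lem:est3}. Since the Green's function decomposition $G(\bkappa^*,\omega;\cdot)=H_0+\GL+\tG$ induces the operator decomposition $\cS_\eps^{\bkappa^*,\omega}=\SH+\SL+\tS$, the matrix entries split as $a_{m,n}=(\SH)_{m,n}+(\SL)_{m,n}+(\tS)_{m,n}$.

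First I would collect the leading-order contributions into $\cD$. By Lemma \ref{lem:est1}, the $\SH$ part vanishes off-diagonal, so $\cD$ is diagonal as claimed. For $n=0$ I combine $\frac{1}{2\pi}(\ln\eps+\ln\omega+\gamma_0)$ from $\SH$, $\frac{1}{|Y|(\omega^2-|\bkappa^*|^2)}\bigl(3-\frac{4\pi\eps^2}{3}(\frac{2\pi}{a})^2\bigr)$ from $\SL$, and $\tG(\bkappa^*,\omega;\mathbf{0})$ from $\tS$; for $n=\pm 1$ I sum $-1/2$ from $\SH$ with the $\eps^2$-term from $\SL$ while $\tS$ contributes nothing at leading order; for $|n|>1$ only $-\tfrac{1}{2|n|}$ from $\SH$ survives. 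These reproduce the stated values of $d_n$.

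Next I would write each remainder as $\eps$ times a matrix and assemble $\cE=\cE^{SH}+\cE^{SL}+\tilde{\cA}$, then bound the three pieces uniformly as operators $\mathbb{H}^{-1/2}\to\mathbb{H}^{1/2}$. The $\tilde{\cA}$ piece is already handled by Lemma \ref{lem:est3}. The matrix $\cE^{SH}$ is diagonal with entries of size $O(\eps|\ln\eps|/|n|)$, so as a diagonal operator its norm is controlled by $\sup_{n}(1+|n|)\cdot C\eps|\ln\eps|/|n|=O(\eps|\ln\eps|)$, which vanishes as $\eps\to 0$. For $\cE^{SL}$ I would invoke the upper bound $(\omega^2-|\bkappa^*|^2)^{-1}\leq C\eps^{-2}$ valid on $\Omega_\eps(\bkappa^*)$, which converts the factor $\eps^{\max(3,|m|,|n|)}$ from Lemma \ref{lem:est2} into $\eps\cdot \eps^{\max(0,|m|-3,|n|-3)}$, and verify boundedness via Schur's test applied to the weighted kernel $(1+|m|)^{1/2}(1+|n|)^{1/2}|e_{m,n}^{SL}|$, splitting the sum into the finite block $\max(|m|,|n|)\leq 3$ (of total size $O(1)$) and a geometrically decaying tail in $\eps$.

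The main obstacle is precisely the uniform bound on $\cE^{SL}$: the individual $\SL$ entries carry the potentially large prefactor $1/(\omega^2-|\bkappa^*|^2)$, which on $\Omega_\eps(\bkappa^*)$ can be as large as $\eps^{-2}$. Uniformity is only preserved because the high order of vanishing of the spatial integrand encoded in the Taylor tail $\GL^\infty$, together with the orthogonality estimate at the end of the proof of Lemma \ref{lem:est2}, absorbs two powers of $\eps^{-1}$ and still leaves one extra power of $\eps$ alongside geometric decay in $\max(|m|,|n|)$. Combining the three bounds yields $\|\cE\|\leq C$ uniformly in $\eps\in(0,\sfc)$ and $\omega\in\Omega_\eps(\bkappa^*)$, which completes the plan.
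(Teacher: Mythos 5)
Your proposal is correct and follows essentially the same route as the paper, which simply asserts the decomposition ``by virtue of Lemmas \ref{lem:est1}--\ref{lem:est3}'' together with the two-sided bound $C_1|\ln\eps|\leq(\omega^2-|\bkappa^*|^2)^{-1}\leq C_2\eps^{-2}$ on $\Omega_\eps(\bkappa^*)$ -- precisely the absorption of the $O(\eps^3)$ and $O(\eps^{\max(3,|m|,|n|)})$ remainders of $\SL$ that you identify as the main point. Your added details (the diagonal bound for the $\SH$ remainder and the Schur-test argument for the $\SL$ remainder) only make explicit what the paper leaves implicit.
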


\subsection{Characteristic equations}
We reduce the subsystems \eqref{eq:subsys1}-\eqref{eq:subsys3} to the nonlinear characteristic equations for $\omega$ by using the decomposition of the matrix $\cA$ in Proposition \ref{prop:decomp_A}. To this end, we denote 
$\bbZ^* = \bbZ\backslash \{0\}$ and
define the vectors
\begin{eqnarray}
&& \hat\ba_0 := \{a_{3m,0} \}_{m\in \bbZ^* }, \; \hat\ba_1 := \{a_{3m+1,1} \}_{m\in \bbZ^*}, \; \hat\ba_{-1} := \{a_{3m-1,-1} \}_{m\in \bbZ^*}, \label{eq:hat_a}\\
&& \hat\bc_0 := \{c_{3m} \}_{m\in \bbZ^*}, \; \hat\bc_1 := \{c_{3m+1} \}_{m\in \bbZ^*}, \; \hat\bc_{-1} := \{c_{3m-1} \}_{m\in \bbZ^*}, \label{eq:hat_c}
\end{eqnarray}
and matrices
\begin{equation}\label{eq:hat_Aj}
\hat\cA_0:=[a_{3m,3n}]_{m\in \bbZ^*,n\in \bbZ^*}, \; \hat\cA_1:=[a_{3m+1,3n+1}]_{m\in \bbZ^*,n\in \bbZ^*}, \mbox{and} \; \hat\cA_{-1}:=[a_{3m-1,3n-1}]_{m\in \bbZ^*,n\in \bbZ^*}.   
\end{equation}
Then using Lemma \ref{lem:amn} (i), each system in \eqref{eq:subsys1}-\eqref{eq:subsys3} can be split into the following two equations:
\begin{equation}\label{eq:subsys_compact}
a_{j,j} \, c_j + \left\langle \hat\bc_j,\hat\ba_j \right\rangle \,  = 0, \quad  \hat\cA_j \hat\bc_j + c_j \hat\ba_j = \bzero, \quad j = 0, 1, -1,
\end{equation}
where the equation for $m=0$ and $m=\pm1, \pm2, \cdots$ are treated separately.
Here and thereafter, the inner product $\displaystyle{\langle \ba,\bb \rangle := \sum_{m=-\infty}^\infty a_m \bar b_m}$ for the vectors $\ba := \{a_{m} \}_{m\in \bbZ}$ and $\bb := \{b_{m} \}_{m\in \bbZ}$.

\medskip

\begin{theorem}\label{thm:A_inv}
For $\bkappa=\bkappa^*$, $\omega\in \Omega_\eps(\bkappa^*)$ and sufficiently small $\eps$, the operator $\hat\cA_j: \mathbb{H}^{-1/2} \to \mathbb{H}^{1/2}$ is invertible and the operator norm $\norm{\hat\cA_j^{-1}}<1/2$.  
\end{theorem}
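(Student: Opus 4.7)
The plan is to treat $\hat\cA_j$ as a small perturbation of its diagonal part and invert it via a Neumann series. Applying Proposition~\ref{prop:decomp_A} and restricting to the index set $I_j := \{3m+j : m\in\bbZ^*\}$ gives the decomposition $\hat\cA_j = \hat\cD_j + \eps\,\hat\cE_j$, where $\hat\cD_j$ and $\hat\cE_j$ are the corresponding restrictions of $\cD$ and $\cE$. The structural observation that drives the whole argument is that for each $j\in\{0,\pm 1\}$, the set $I_j$ contains no index with $|n|\le 1$: one has $|n|\ge 3$ for $j=0$ and $|n|\ge 2$ for $j=\pm 1$. Consequently, the $\omega$-dependent singular diagonal entries $d_0,d_{\pm 1}$ of $\cD$, which blow up through the factor $1/(\omega^2-|\bkappa^*|^2)$ as $\omega\to\bar\omega_1(\bkappa^*)$, never appear in $\hat\cD_j$. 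Thus $\hat\cD_j$ is a ``clean'' diagonal with entries $d_n = -1/(2|n|)$ for $n\in I_j$, uniformly bounded and uniformly bounded below in absolute value, independently of $\omega\in\Omega_\eps(\bkappa^*)$.

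Next I would verify that $\hat\cD_j : \mathbb{H}^{-1/2}\to\mathbb{H}^{1/2}$ is invertible with inverse given by termwise multiplication by $-2|n|$, and bound $\|\hat\cD_j^{-1}\|_{\mathbb{H}^{1/2}\to\mathbb{H}^{-1/2}}$ by comparing
\[
\|\hat\cD_j^{-1}\ba\|_{\mathbb{H}^{-1/2}}^2 \;=\; \sum_{n\in I_j}\frac{4n^2}{(1+n^2)^{1/2}}|a_n|^2
\]
termwise to $\|\ba\|_{\mathbb{H}^{1/2}}^2=\sum_{n\in I_j}(1+n^2)^{1/2}|a_n|^2$. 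The estimate reduces to $\sup_{n\in I_j} 4n^2/(1+n^2)$, which is an explicit constant $M_0$ independent of $\omega$ and $\eps$. Combined with the uniform bound $\|\hat\cE_j\|\le\|\cE\|\le C$ supplied by Proposition~\ref{prop:decomp_A}, the composition $\eps\hat\cD_j^{-1}\hat\cE_j:\mathbb{H}^{-1/2}\to\mathbb{H}^{-1/2}$ has norm at most $\eps M_0 C$.

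For $\eps$ below a threshold $\sfc>0$, the factor $\eps M_0 C<1$, so the Neumann series for $(I+\eps\hat\cD_j^{-1}\hat\cE_j)^{-1}$ converges on $\mathbb{H}^{-1/2}$, yielding
\[
\hat\cA_j^{-1} \;=\; \bigl(I+\eps\hat\cD_j^{-1}\hat\cE_j\bigr)^{-1}\hat\cD_j^{-1}\;:\;\mathbb{H}^{1/2}\to\mathbb{H}^{-1/2},
\]
together with the geometric-series bound $\|\hat\cA_j^{-1}\|\le M_0/(1-\eps M_0 C)$. The main technical point is squeezing the numerical constant down to $1/2$: this forces one to use the sharpest termwise estimate on $\hat\cD_j^{-1}$ that the exclusion of $|n|\le 1$ in $I_j$ provides, and then shrink $\sfc$ further if necessary so that the perturbative correction $\eps M_0 C$ is absorbed. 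The resulting quantitative bound $\|\hat\cA_j^{-1}\|<1/2$ is exactly the input needed by the downstream Schur-complement reduction of \eqref{eq:subsys_compact}, which uses $c_j\,\hat\ba_j = -\hat\cA_j\hat\bc_j$ to eliminate $\hat\bc_j$ and turn each subsystem into a scalar characteristic equation for $\omega$.
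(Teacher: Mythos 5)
Your proposal is essentially the paper's own proof: the same decomposition $\hat\cA_j=\hat\cD_j+\eps\,\hat\cE_j$ inherited from Proposition~\ref{prop:decomp_A}, the same key observation that the index set $\{3m+j:m\in\bbZ^*\}$ avoids $n=0,\pm1$ so that $\hat\cD_j$ contains only the nonsingular entries $-1/(2|n|)$ and is invertible uniformly in $\omega\in\Omega_\eps(\bkappa^*)$, and the same Neumann-series inversion of $I+\eps\,\hat\cD_j^{-1}\hat\cE_j$ for small $\eps$. One caveat (which applies equally to the paper's argument): the advertised constant $1/2$ cannot really be "squeezed out" of this route, since the sharp termwise estimate gives $\|\hat\cD_j^{-1}\|_{\mathbb{H}^{1/2}\to\mathbb{H}^{-1/2}}$ close to $2$, so the geometric-series bound yields $\|\hat\cA_j^{-1}\|=O(1)$ (roughly $2$) rather than $<1/2$; fortunately only a uniform $O(1)$ bound is needed for the Schur-complement reduction to the characteristic equations \eqref{eq:charac}.
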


\begin{proof}
For each $\omega \in  \Omega_\eps(\bkappa^*)$,
let $\cA$ be decomposed as $\cA=\cD+\eps\cE$ as in Proposition \ref{prop:decomp_A}.
Similarly, we decompose $\hat\cA_j$ as
$  \hat\cA_j=\hat\cD_j+\eps\hat\cE_j$, in which $ \hat\cD_j:=\diag(d_{3n+j})_{n\neq0}$ is a diagonal matrix and the matrix $\hat\cE_j=[e_{3m+j,3n+j}]$ is bounded from $\mathbb{H}^{-1/2}$ to $\mathbb{H}^{1/2}$.

It is obvious that $\hat\cD_j$ is bounded from $\mathbb{H}^{-1/2}$ to $\mathbb{H}^{1/2}$.
In addition, the inverse of $\hat\cD_j$ exists and  $\hat\cD_j^{-1}:=\diag(1/d_{3n+j})_{n\neq0}$ is bounded from $\mathbb{H}^{1/2}$ to $\mathbb{H}^{-1/2}$, with the operator norm bounded by $4$. Let us express $\hat\cA_j$ as
\begin{equation*}
   \hat\cA_j=\hat\cD_j(I+\eps\hat\cD_j^{-1}\hat\cE_j).
\end{equation*}
For sufficiently small $\eps$, $\eps\,\hat\cD_j^{-1}\hat\cE_j$ is bounded on $\mathbb{H}^{-1/2}$, with the operator norm bounded by $1/2$.
Hence $I+\hat\cD_j^{-1}\hat\cE_j$ is an invertible operator on $\mathbb{H}^{-1/2}$, with norm bounded by $1/2$. We conclude that
$\hat\cA_j$ attains the inverse $\hat\cA_j^{-1}=(I+\eps\hat\cD_j^{-1}\hat\cE_j)^{-1}\hat\cD_j^{-1}$, with
$\|\hat\cA_j^{-1}\|<1/2$
for $\omega \in  \Omega_\eps(\bkappa^*)$.
\end{proof}

\medskip

Now by Theorem \ref{thm:A_inv}, we can express $\hat\bc_j$ as
\begin{equation}\label{eq:bcj}
\hat\bc_j = -c_j (\hat\cA_j ^{-1} \hat\ba_j),  \quad j = 0, 1, -1. 
\end{equation}
Substituting into the equation for $m=0$ in \eqref{eq:subsys_compact}, we obtain the following three equations for $c_j$:
\begin{equation}\label{eq:cj}
    a_{j,j}(\bkappa^*,\omega) \, c_j - \left\langle \hat\cA_j^{-1}\hat\ba_j(\bkappa^*,\omega),\hat\ba_j(\bkappa^*,\omega) \right\rangle \, c_j = 0, \quad j = 0, 1, -1.
\end{equation}
To obtain the eigenvalues for $\bkappa=\bkappa^*$, we solve for $\omega$ such that \eqref{eq:cj} attains nontrivial solutions, or equivalently, we find $\omega$ that is a root of one of the  characteristic equations:
\begin{equation}\label{eq:charac}
    a_{j,j}(\bkappa^*,\omega)  - \left\langle \hat\cA_j^{-1}\hat\ba_j(\bkappa^*,\omega),\hat\ba_j(\bkappa^*,\omega) \right\rangle = 0, \quad j = 0, 1, -1.
\end{equation}

In summary, we have the following proposition for the characteristic values of \eqref{eq:subsys1}-\eqref{eq:subsys3}.

\begin{proposition}\label{prop:charac}
$\omega$ is a characteristic value of the system \eqref{eq:subsys_compact} if and only if $\omega$ is a root of the characteristic equation \eqref{eq:charac}. In addition, the dimension of the solution space for each system in \eqref{eq:subsys_compact} is 1.
\end{proposition}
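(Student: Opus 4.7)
The plan is to exploit the block structure of \eqref{eq:subsys_compact}, where the ``$m=0$'' equation is scalar and the ``$m\ne 0$'' block is governed by the invertible operator $\hat\cA_j$ given by Theorem \ref{thm:A_inv}. First I would recall that, for each $j\in\{0,1,-1\}$, the system \eqref{eq:subsys_compact} is exactly equivalent to the original subsystem among \eqref{eq:subsys1}--\eqref{eq:subsys3} restricted to the index set indexed by $3\bbZ + j$, by pulling out the $m=0$ row and separating the coefficient $c_j$ from the tail $\hat\bc_j$. Thus a nontrivial solution $\{c_{3n+j}\}_{n\in\bbZ}\in \mathbb{H}^{-1/2}$ of the subsystem corresponds bijectively to a nontrivial pair $(c_j,\hat\bc_j)\in\bbC\times\mathbb{H}^{-1/2}$ satisfying \eqref{eq:subsys_compact}.

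Next, since $\omega\in\Omega_\eps(\bkappa^*)$ and $\eps$ is sufficiently small, Theorem \ref{thm:A_inv} gives the bounded inverse $\hat\cA_j^{-1}:\mathbb{H}^{1/2}\to \mathbb{H}^{-1/2}$. Solving the second equation in \eqref{eq:subsys_compact} uniquely yields
\begin{equation*}
\hat\bc_j \;=\; -c_j\,\hat\cA_j^{-1}\hat\ba_j,
\end{equation*}
which is precisely \eqref{eq:bcj}. Substituting this into the first (scalar) equation of \eqref{eq:subsys_compact} collapses the system to
\begin{equation*}
c_j\bigl(a_{j,j}(\bkappa^*,\omega) - \langle \hat\cA_j^{-1}\hat\ba_j(\bkappa^*,\omega),\hat\ba_j(\bkappa^*,\omega)\rangle\bigr) \;=\; 0.
\end{equation*}
This is the crux: a nontrivial $(c_j,\hat\bc_j)$ requires $c_j\ne 0$ (since $c_j=0$ forces $\hat\bc_j=0$ by the displayed formula above), which in turn forces $\omega$ to satisfy the characteristic equation \eqref{eq:charac}. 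Conversely, if $\omega$ is a root of \eqref{eq:charac}, then any choice $c_j\ne 0$ together with $\hat\bc_j=-c_j\hat\cA_j^{-1}\hat\ba_j$ provides a nontrivial element of $\mathbb{H}^{-1/2}$ (the membership follows from the boundedness of $\hat\cA_j^{-1}$ and $\hat\ba_j\in\mathbb{H}^{1/2}$, which is a consequence of Proposition \ref{prop:decomp_A} applied to the row $\{a_{3m+j,j}\}_{m\in\bbZ^*}$).

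For the dimension statement, the solution is completely determined by the single scalar $c_j$ via the formula $\hat\bc_j=-c_j\hat\cA_j^{-1}\hat\ba_j$; hence, when \eqref{eq:charac} holds, the solution space of \eqref{eq:subsys_compact} is parametrized by $c_j\in\bbC$ and therefore one-dimensional. I do not anticipate a genuine obstacle here, as the heavy lifting has already been carried out in Proposition \ref{prop:decomp_A} and Theorem \ref{thm:A_inv}; the only minor point to verify carefully is that the reduction from the subsystems \eqref{eq:subsys1}--\eqref{eq:subsys3} to the compact form \eqref{eq:subsys_compact} is an honest rewriting in the sequence spaces $\mathbb{H}^{-1/2}_{\bkappa^*,j}$, so that ``nontriviality'' in the two formulations matches.
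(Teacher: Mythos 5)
Your proposal is correct and follows essentially the same route as the paper: the paper derives \eqref{eq:charac} exactly by inverting $\hat\cA_j$ via Theorem \ref{thm:A_inv}, writing $\hat\bc_j=-c_j\hat\cA_j^{-1}\hat\ba_j$ as in \eqref{eq:bcj}, and substituting into the scalar $m=0$ equation, with the one-dimensionality following because the solution is determined by the single parameter $c_j$. Your added care about $c_j\ne 0$ forcing nontriviality and about $\hat\ba_j\in\mathbb{H}^{1/2}$ (via Proposition \ref{prop:decomp_A}) only makes explicit what the paper leaves implicit.
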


\subsection{Asymptotic expansion of the eigenvalues and eigenfunctions for $\bkappa=\bkappa^*$}
In view of Propositions \ref{prop_repeated_eig} and \ref{prop:charac}, let us solve the characteristic equation \eqref{eq:charac} in $\Omega_\eps(\bkappa^*)$ for $j=0, 1$ to obtain the eigenvalues. When $j=0$, \eqref{eq:charac} reads
\begin{equation}\label{eq:chara_0}
    a_{0,0}(\bkappa^*,\omega)  - \left\langle \hat\cA_0^{-1}\hat\ba_0(\bkappa^*,\omega),\hat\ba_0(\bkappa^*,\omega) \right\rangle = 0.
\end{equation}
From Proposition \ref{prop:decomp_A}, we have
\begin{align*}
 a_{0,0}(\bkappa^*,\omega) &= \frac{1}{|Y|}\frac{1}{\omega^2-|\bkappa^*|^2}\left(3- \frac{4\pi\eps^2}{3}\left(\frac{2\pi}{a}\right)^2\right) + \beta_1 + \frac{1}{2\pi}\ln\eps + O(\eps); \\
 a_{3m,0} & =  O\left(\eps\right), \quad m \neq 0.
\end{align*}
Here $\beta_1(\omega):=\dfrac{1}{2\pi}(\ln\omega+\gamma_0)+\tG(\bkappa^*,\omega;\mathbf{0})$.
Hence \eqref{eq:chara_0} attains the expansion
$$  \frac{1}{|Y|}\frac{1}{\omega^2-|\bkappa^*|^2}\left(3- \frac{4\pi\eps^2}{3}\left(\frac{2\pi}{a}\right)^2\right) + \beta_1 + \frac{1}{2\pi}\ln\eps + O(\eps) = 0,  $$
which can be written as
\begin{equation}\label{eq:chara_00}
    \omega^2-\abs{\bkappa^*}^2 = - \frac{1}{\abs{Y}}\left(3- \frac{4\pi\eps^2}{3}\left(\frac{2\pi}{a}\right)^2\right) \left(\frac{1}{2\pi}\ln\eps + \beta_1 + O(\eps) \right)^{-1}.
\end{equation}

Similarly, when $j=1$, it follows from  Proposition \ref{prop:decomp_A} that
\begin{align*}
 a_{1,1}(\bkappa,\omega)
&= -\frac{1}{2}+\frac{\alpha\eps^2}{\omega^2-|\bkappa^*|^2}+O(\eps). \\
a_{3m+1,1} &=  O(\eps),  \quad m\neq0.
\end{align*}
Here $\alpha:=\frac{2\pi}{3|Y|}\left(\frac{2\pi}{a}\right)^2$. 
Therefore, the characteristic equation
$$a_{1,1}(\bkappa^*,\omega)  - \left\langle \hat\cA_1^{-1}\hat\ba_1(\bkappa^*,\omega),\hat\ba_1(\bkappa^*,\omega) \right\rangle = 0$$
attains the expansion
\begin{equation*}
    -\frac{1}{2}+\frac{\alpha\eps^2}{\omega^2-|\bkappa^*|^2}+O(\eps) = 0,
\end{equation*}
or equivalently,
\begin{equation}\label{eq:chara_1}
   \omega^2-\abs{\bkappa^*}^2 = \frac{1}{1+O(\eps)} \cdot 2\alpha \cdot \eps^2.
\end{equation}
It follows from  Proposition \ref{prop_repeated_eig} that $\omega$ satisfying \eqref{eq:chara_1} is also a characteristic value of \eqref{eq:charac} for $j=-1$.

Note that the $\omega$ values satisfying \eqref{eq:chara_00} and \eqref{eq:chara_1} lie in the region $\Omega_\eps(\bkappa^*)$. We arrive at the following theorem for the eigenvalues in $\Omega_\eps(\bkappa^*)$ and the corresponding eigenfunctions for $\bkappa=\bkappa^*$.

\begin{theorem}\label{thm:eigenvalue0}
If $\bkappa=\bkappa^*$, the Dirichlet problem \eqref{eq:eigen_prob}-\eqref{eq:Dirichlet} attains two eigenvalues in $\Omega_\eps(\bkappa^*)$ for $\eps\ll1$:
\begin{align*}
     \omega_1^* & = \abs{\bkappa^*} +  \frac{\alpha}{\abs{\bkappa^*}} \cdot \eps^2 + O\left(\eps^3\right), \\
      \omega_1^{**} & = \abs{\bkappa^*} -  \frac{3\pi}{\abs{Y}\abs{\bkappa^*}} \cdot \frac{1}{\ln\eps}  + O\left(\frac{1}{\ln^2\eps}\right).
\end{align*}
The corresponding eigenspaces are given by 
$$ V^*:=\mbox{span}\left\{ \cS_\eps^{\bkappa,\omega} \varphi_1, \cS_\eps^{\bkappa,\omega}\varphi_{-1}\right\}
\quad \mbox{and} \quad
V^{**}:=\mbox{span}\left\{ \cS_\eps^{\bkappa,\omega} \varphi_0\right\},
$$
where $\cS_\eps^{\bkappa,\omega}$ is the single-layer potential, and $\varphi_j\in H_{\bkappa^*,j}^{-1/2}([0,2\pi])$ with $\varphi_j(t) = e^{ijt} + O(\eps)$ in the parameter space for $j=0,\pm1$. 
\end{theorem}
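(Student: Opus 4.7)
The plan is to obtain the two eigenvalues as roots of the scalar characteristic equations \eqref{eq:chara_00} and \eqref{eq:chara_1} already derived in Section 3.3, and then to recover the eigenfunctions from the coefficient recursion \eqref{eq:bcj}. By Proposition \ref{prop_repeated_eig}, the systems for $j=1$ and $j=-1$ share the same characteristic values, so only two scalar equations need to be solved: one for $j=0$ and one for $j=1$.

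First, I would treat the $j=1$ case. Equation \eqref{eq:chara_1} is a regular (non-logarithmic) perturbation of $\omega^2=|\bkappa^*|^2$. A direct contraction mapping, or the implicit function theorem, applied on the disk $\{|\omega-|\bkappa^*||\lesssim\eps^2\}\subset\Omega_\eps(\bkappa^*)$ produces a unique root, and Taylor-expanding the square root gives $\omega_1^* = |\bkappa^*| + \alpha\eps^2/|\bkappa^*| + O(\eps^3)$. By Proposition \ref{prop_repeated_eig} the same value solves the $j=-1$ equation with an independent density related by the symmetry $c_{3n-1}=(-1)^{-3n}\overline{c_{-3n+1}}$, so this eigenvalue carries a two-dimensional eigenspace.

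The $j=0$ case is the more delicate one. Equation \eqref{eq:chara_00} has the mixed form
\[
\omega^2 - |\bkappa^*|^2 = -\frac{1}{|Y|}\Bigl(3 + O(\eps^2)\Bigr)\Bigl(\tfrac{1}{2\pi}\ln\eps + \beta_1(\omega) + O(\eps)\Bigr)^{-1},
\]
whose leading balance is $\omega^2-|\bkappa^*|^2\sim -6\pi/(|Y|\ln\eps)$. The window $\Omega_\eps(\bkappa^*)$ was designed exactly so that this balance lies strictly inside it, and a contraction mapping on $\omega$ over this window yields a unique root. The only subtlety is the implicit dependence through $\beta_1(\omega)=(2\pi)^{-1}(\ln\omega+\gamma_0)+\tG(\bkappa^*,\omega;\bzero)$, but by Lemma \ref{lem:tG_analyticity} this quantity is uniformly bounded and analytic on $\Omega_\eps(\bkappa^*)$, so it only enters the $O(1/\ln^2\eps)$ remainder. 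Expanding the square root then produces $\omega_1^{**} = |\bkappa^*| - 3\pi/(|Y||\bkappa^*|\ln\eps) + O(1/\ln^2\eps)$.

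Finally, I would extract the eigenfunctions. For each root $\omega$, \eqref{eq:bcj} gives $\hat\bc_j = -c_j\,\hat\cA_j^{-1}\hat\ba_j$; by Theorem \ref{thm:A_inv} the inverse is bounded by $1/2$ uniformly on $\Omega_\eps(\bkappa^*)$, and by Proposition \ref{prop:decomp_A} all entries $a_{3m+j,j}$ with $m\neq 0$ belong to the off-diagonal remainder $\eps\cE$ and hence satisfy $\|\hat\ba_j\|=O(\eps)$. Normalizing $c_j=1$ therefore yields a density $\varphi_j(t)=e^{ijt}+O(\eps)$ in $H^{-1/2}_{\bkappa^*,j}([0,2\pi])$ (absorbing the $\sqrt{2\pi}$ constant into the spanning set), and the corresponding Bloch mode is $\cS_\eps^{\bkappa,\omega}\varphi_j$; the $j=\pm 1$ densities span $V^*$ while $\varphi_0$ spans $V^{**}$. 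The principal technical obstacle is the logarithmic equation \eqref{eq:chara_00}: one must verify that the contraction closes on $\Omega_\eps(\bkappa^*)$ despite the implicit $\omega$-dependence through $\beta_1$, and that no spurious second root lies inside the window, in order to justify the stated $1/\ln\eps$ asymptotic and the one-dimensionality of $V^{**}$.
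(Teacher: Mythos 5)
Your proposal is correct and follows essentially the same route as the paper: both reduce the problem to the scalar characteristic equations \eqref{eq:chara_00} and \eqref{eq:chara_1}, use Proposition \ref{prop_repeated_eig} for the two-fold degeneracy at $\omega_1^*$, and recover the densities $\varphi_j(t)=e^{ijt}+O(\eps)$ from \eqref{eq:bcj}--\eqref{eq:cj} together with the uniform bound of Theorem \ref{thm:A_inv} and the $O(\eps)$ size of $\hat\ba_j$. The only difference is that the paper establishes existence of the roots by Rouch\'e's theorem, whereas you use a contraction mapping/implicit function argument, which is an equivalent standard device here.
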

\begin{proof}
 The existence of roots for the characteristic equations \eqref{eq:chara_00} and \eqref{eq:chara_1} in the region $\Omega_\eps(\bkappa^*)$ follows directly from Rouche theorem, and
the asymptotic expansions of roots $\omega_1^{**}$ and $\omega_1^*$ are obtained from the expansions of \eqref{eq:chara_00} and \eqref{eq:chara_1}.  The expansions of the eigenfunctions $\varphi_0$, $\varphi_{\pm 1}$ for the integral equation \eqref{eq:int_eq2} in the parameter space are obtained by \eqref{eq:bcj}-\eqref{eq:cj}, where we use Theorem \ref{thm:A_inv}.
Hence we obtain the eigenvalues and eigenspaces for the Dirichlet problem \eqref{eq:eigen_prob}-\eqref{eq:Dirichlet}.
\end{proof}

\begin{remark}
$\omega_1^*$ is an eigenvalue of multiplicity two. The accuracy of its asymptotic formula is demonstrated in Table \ref{tab:omega_1}. As to be shown in the next section, the dispersion surfaces $(\bkappa,\omega)$  near $(\bkappa^*,\omega_1^*)$ possess conical singularity. Thus the pair $(\bkappa^*,\omega_1^*)$ is a Dirac point, which is formed by the crossing of the first two band surfaces. $\omega_1^{**}$ is an eigenvalue of multiplicity one that is located on the third band.
\end{remark}

\begin{table}[!htbp]
    \centering
    \begin{tabular}{|| c | c c c c||}
    \hline
 $\eps$ & 1/40 & 1/20 & 1/10 & 1/5 \\  
 \hline\hline
 $\omega_1^* \cdot \frac{a}{2\pi}$ & 0.66896  & 0.67559 & 0.70172 & 0.81715 \\ 
 \hline
 $\omega_{1,0}^* \cdot \frac{a}{2\pi}$ &  0.66893 & 0.67573 & 0.70294 &  0.81177 \\
 \hline
 error & 3e-5 & 1.4e-4 & 1.2e-3 & 5.4e-3 \\
 \hline
    \end{tabular}
    \caption{The accuracy of the asymptotic formula for $\omega_1^*$ for various obstacle sizes. $\omega_1^*$ is obtained numerically, and $\omega_{1,0}^*:=\abs{\bkappa^*} +  \frac{\alpha}{\abs{\bkappa^*}} \cdot \eps^2$ represents the leading orders of the asymptotic expansion. The lattice constant is set as $a=1$. }
    \label{tab:omega_1}
\end{table}

\section{The conical singularity for the dispersion surface near the Dirac point}
 We consider the dispersion relations $(\bkappa,\omega)$  near $(\bkappa^*,\omega_1^*)$ for the eigenvalue problem \eqref{eq:eigen_prob}-\eqref{eq:Dirichlet}. In view of \eqref{eq:linear_sys2}, $(\bkappa,\omega)$ is a pair such that the system $\cA(\bkappa,\omega)\, \bc = \bzero$ attains a nontrivial solution $\bc\in\mathbb{H}^{-1/2}$. Let $\bc^*\in\mathbb{H}^{-1/2}$ be a solution to the system $\cA(\bkappa^*,\omega_1^*)\bc^*=0$.
From the discussions in Section~\ref{sec:omega1}, it is known that $\bc^* \in \mathbb{H}_*^{-1/2} := \mbox{span}\{ \bc_1, \bc_{-1}\}$, where
$\bc_j=\{c_n\}_{n\in \bbZ}\in\mathbb{H}_{\bkappa^*,j}^{-1/2}$.  We normalize the vectors such that $\norm{\bc_1}=\norm{\bc_2}=1$.

Let $\delta\bkappa=\bkappa-\bkappa^*$, $\delta\omega=\omega-\omega_1^*$, $\delta\bc = \bc - \bc^*$, and $\delta\cA(\bkappa,\omega):=\cA(\bkappa,\omega)-\cA(\bkappa^*,\omega_1^*)$. Then $\delta\bc$ satisfies 
\begin{equation}\label{eq:deltac1}
  \cA(\bkappa^*,\omega_1^*)\delta\bc= - \delta\cA(\bkappa,\omega)\bc^*-\delta\cA(\bkappa,\omega) \delta \bc.
\end{equation}
The Fredholm alternative implies that there exists a $\delta\bc$ such that \eqref{eq:deltac1} holds if and only if $\delta\cA(\bkappa,\omega)\bc^*+\delta\cA(\bkappa,\omega)\delta\bc \perp \mathbb{H}_*^{-1/2}$, or equivalently,
\begin{equation}\label{eq:solvability}
   \langle \delta\cA(\bkappa,\omega)\bc^*+\delta\cA(\bkappa,\omega) \delta \bc, \bc_j \rangle = 0, \quad j = -1, 1.
\end{equation}
In addition, when \eqref{eq:solvability} holds, there exists a unique $\delta\bc\perp\mathbb{H}_*^{-1/2}$ that solves \eqref{eq:deltac1}, and the solution of the system \eqref{eq:deltac1} takes the form
\begin{equation*}
    \delta\bc = - \cA_\perp^{-1}(\bkappa^*,\omega_1^*)\big(\delta\cA(\bkappa,\omega)\bc^*+\delta\cA(\bkappa,\omega) \delta \bc\big),
\end{equation*}
where $\cA_\perp$ denotes the restriction of $\cA$ onto the subspace that is orthogonal to $\mathbb{H}_*^{-1/2}$. Explicitly, 
\begin{equation}\label{eq:deltac2}
    \delta\bc = - \Big(\cI +\cA_\perp^{-1}(\bkappa^*,\omega_1^*)\delta\cA(\bkappa,\omega) \Big)^{-1} \cA_\perp^{-1}(\bkappa^*,\omega_1^*)\delta\cA(\bkappa,\omega)\bc^*.
\end{equation}
Combining \eqref{eq:solvability} and \eqref{eq:deltac2}, and expanding $\bc^*$ as $\bc^*=\alpha_{-1} \bc_{-1}+\alpha_1 \bc_1$ for certain $\alpha_{-1}$, $\alpha_{1} \in \mathbb{C}$, we have the following lemma for $(\bkappa, \omega)$.
\begin{lemma}\label{lem:pertdisp}
For a fixed $\eps$ that is sufficiently small, a pair of $(\bkappa,\omega)$ in the neighborhood of $(\bkappa^*,\omega_1^*)$ is on the dispersion surface if and only if  the following $2\times2$ system holds:
\begin{equation}\label{eq:pertdisp}
   \alpha_{-1}\,\big\langle (\cI + \cT) \delta\cA(\bkappa,\omega)\bc_{-1},\bc_j\big\rangle+   \alpha_{1} \, \big\langle (\cI + \cT) \delta\cA(\bkappa,\omega)\bc_{1},\bc_j\big\rangle=0,\quad j = -1, 1.
\end{equation}
where $\alpha_1$ and $\alpha_{-1}$ are constants, $\alpha_1\cdot\alpha_{-1}\neq0$ and the operator
\begin{equation*}
  \cT(\bkappa,\omega) := - \delta\cA(\bkappa,\omega)\Big(\cI +\cA_\perp^{-1}(\bkappa^*,\omega_1^*)\delta\cA(\bkappa,\omega) \Big)^{-1} \cA_\perp^{-1}(\bkappa^*,\omega_1^*).    
\end{equation*}
\end{lemma}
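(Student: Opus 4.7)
The plan is to carry out a Lyapunov--Schmidt reduction of the infinite linear system $\cA(\bkappa,\omega)\bc=\bzero$ around the degenerate point $(\bkappa^*,\omega_1^*)$, where, by Theorem \ref{thm:eigenvalue0} and Proposition \ref{prop:charac}, the kernel of $\cA(\bkappa^*,\omega_1^*)$ is exactly the two--dimensional space $\mathbb{H}_*^{-1/2}=\mathrm{span}\{\bc_1,\bc_{-1}\}$. Writing any nearby solution as $\bc=\bc^*+\delta\bc$ with $\bc^*=\alpha_{-1}\bc_{-1}+\alpha_1\bc_1$, a direct substitution gives \eqref{eq:deltac1}. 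The strategy is then the standard one: enforce the solvability condition and compute the unique $\delta\bc\perp\mathbb{H}_*^{-1/2}$ satisfying \eqref{eq:deltac1}, then substitute the latter back into the solvability condition to reduce the problem to a $2\times 2$ linear system in $(\alpha_{-1},\alpha_1)$.

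First I would observe that $\cA(\bkappa^*,\omega_1^*)$ is bounded and self--adjoint (by Lemma \ref{lem:amn}(i)) with two--dimensional kernel $\mathbb{H}_*^{-1/2}$ and closed range equal to the orthogonal complement of $\mathbb{H}_*^{-1/2}$; this is essentially the content of Theorem \ref{thm:A_inv} applied to the three sub--blocks after the $j=0$ direction is factored out. Hence its restriction $\cA_\perp(\bkappa^*,\omega_1^*)$ to $(\mathbb{H}_*^{-1/2})^\perp$ is invertible, and the Fredholm alternative gives both the solvability condition \eqref{eq:solvability} and the formula \eqref{eq:deltac2}, provided that the Neumann series for $\bigl(\cI+\cA_\perp^{-1}(\bkappa^*,\omega_1^*)\,\delta\cA(\bkappa,\omega)\bigr)^{-1}$ converges; the latter holds because $\delta\cA(\bkappa,\omega)$ is small in operator norm for $(\bkappa,\omega)$ near $(\bkappa^*,\omega_1^*)$, as can be read off the decomposition in Proposition \ref{prop:decomp_A} and the smoothness of $\cA$ in its arguments away from the singular frequency.

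Next comes the algebraic identity that converts the pair of equations \eqref{eq:solvability} into \eqref{eq:pertdisp}. Using \eqref{eq:deltac2} one has
\begin{equation*}
\bc^*+\delta\bc=\Bigl[\cI-\bigl(\cI+\cA_\perp^{-1}\delta\cA\bigr)^{-1}\cA_\perp^{-1}\delta\cA\Bigr]\bc^* =\bigl(\cI+\cA_\perp^{-1}\delta\cA\bigr)^{-1}\bc^*,
\end{equation*}
so that $\delta\cA(\bc^*+\delta\bc)=(\cI+\cT)\,\delta\cA\,\bc^*$ with $\cT$ as defined in the statement. Plugging this identity into \eqref{eq:solvability} and expanding $\bc^*=\alpha_{-1}\bc_{-1}+\alpha_1\bc_1$ yields \eqref{eq:pertdisp} by linearity.

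The only remaining point, which I expect to be the main subtlety, is the claim $\alpha_1\alpha_{-1}\neq 0$. Direct algebra only guarantees that the $2\times 2$ system admits a nontrivial solution $(\alpha_{-1},\alpha_1)$; one has to rule out the possibility that one component vanishes along an entire branch of the dispersion surface. I would argue this in two steps. First, by a Rouch\'e/continuity argument based on Proposition \ref{prop:decomp_A}, the two diagonal entries $\langle(\cI+\cT)\delta\cA\,\bc_j,\bc_j\rangle$, $j=\pm 1$, are both $O(\abs{\delta\bkappa}+\abs{\delta\omega})$ and are strictly dominated by the off--diagonal coupling $\langle(\cI+\cT)\delta\cA\,\bc_{1},\bc_{-1}\rangle$ generated by the $\bkappa$--derivative of the kernel $G(\bkappa,\omega;\cdot)$ at $\bkappa^*$. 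Second, the symmetry relations in Lemma \ref{lem:amn}(ii)--(iii), combined with Proposition \ref{prop_repeated_eig}, force the diagonal entries to be small of the same order and the off--diagonal one to be nonzero for $\delta\bkappa\neq 0$. Consequently any nontrivial solution of \eqref{eq:pertdisp} must have both components nonvanishing, finishing the proof of the lemma.
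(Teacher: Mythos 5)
Your proposal is correct and follows essentially the same route as the paper: a Lyapunov--Schmidt reduction of $\cA(\bkappa,\omega)\bc=\bzero$ about $(\bkappa^*,\omega_1^*)$, the Fredholm alternative giving \eqref{eq:solvability} and \eqref{eq:deltac2}, and the resolvent identity $\delta\cA(\bc^*+\delta\bc)=(\cI+\cT)\delta\cA\,\bc^*$ to obtain \eqref{eq:pertdisp} after expanding $\bc^*=\alpha_{-1}\bc_{-1}+\alpha_1\bc_1$. Your extra discussion of $\alpha_1\alpha_{-1}\neq0$ goes beyond what the paper establishes (it treats this as part of the parametrization rather than proving it, since only the determinant condition is used later), and your sketch of it is the least rigorous part, but it does not affect the correctness of the reduction itself.
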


Let $\delta a_{m,n}:=a_{m,n}(\bkappa,\omega)-a_{m,n}(\bkappa^*,\omega_1^*)$. We have the following lemma for $\delta a_{m,n}$ by using the decomposition \eqref{eq:GH}-\eqref{eq:tG} and
a standard perturbation argument.
\begin{lemma}\label{lem:leadingGrad}
For fixed $\eps\ll1$, there holds
\begin{equation}\label{eq:leadgrad}
\begin{aligned}
    \delta a_{m,n}
     = &(\delta\bkappa,\delta\omega)\cdot\int_0^{2\pi}\int_0^{2\pi}\overline{\phi_m(t)}\left(\nabla_{\bkappa,\omega}
\GL\big(\bkappa^*,\omega_1^*;\eps(\br(t)-\br(\tau))\big)+O(1)\right)\phi_n(\tau) \, d\tau dt\\
& +O(|\delta\bkappa|^2)+O(|\delta\omega|^2)
\end{aligned}
\end{equation}
for sufficiently small $\delta\bkappa$ and $\delta\omega$, where $\GL$ is defined in \eqref{eq:GL}.
\end{lemma}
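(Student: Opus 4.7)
The plan is to use the Green's function decomposition introduced in Section 3.1, which induces the splitting $a_{m,n}=(\SH)_{m,n}+(\SL)_{m,n}+(\tS)_{m,n}$, and Taylor expand each of the three pieces separately about $(\bkappa^*,\omega_1^*)$. The point of the lemma is to isolate the gradient of $\GL$ as the leading contribution, while the bounded contributions from $\SH$ and $\tS$ get absorbed into the $O(1)$ factor inside the integrand.

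First, for the $(\SH)_{m,n}$ piece the kernel depends only on $\omega$. From the Hankel-function expansion \eqref{eq:H0form}, $\partial_\omega H_0(\omega;\eps(\bx-\by))$ is uniformly bounded as $|\bx-\by|\to 0$ (the apparent logarithmic singularity differentiates away, and $|\bx|H_1^{(1)}(\omega|\bx|)$ stays finite), so $\partial_\omega (\SH)_{m,n}=O(1)$ for fixed $\eps$ and the second-order Taylor remainder is $O(|\delta\omega|^2)$. Second, Lemma \ref{lem:tG_analyticity} together with the spectral representation \eqref{eq:G_spec_decomp} shows that $\tG(\bkappa,\omega;\bx)$ is jointly analytic in $(\bkappa,\omega)$ on a neighborhood of $(\bkappa^*,\omega_1^*)$: the only modes with vanishing denominators are those in $\Lambda^*_0(\bar\omega_1)$, and these have been explicitly subtracted off. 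Consequently $\nabla_{\bkappa,\omega}(\tS)_{m,n}=O(1)$ with second-order remainder $O(|\delta\bkappa|^2+|\delta\omega|^2)$.

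For the remaining piece $(\SL)_{m,n}$, I would differentiate the explicit finite sum \eqref{eq:GL} under the integral sign:
\begin{equation*}
\delta(\SL)_{m,n}=(\delta\bkappa,\delta\omega)\cdot \int_0^{2\pi}\!\!\int_0^{2\pi}\overline{\phi_m(t)}\,\nabla_{\bkappa,\omega}\GL\bigl(\bkappa^*,\omega_1^*;\eps(\br(t)-\br(\tau))\bigr)\,\phi_n(\tau)\,d\tau\,dt+R_{m,n},
\end{equation*}
where $R_{m,n}$ is the integral form of the Taylor remainder. Combining the three contributions and packaging the bounded gradients of $H_0$ and $\tG$ into the symbol $O(1)$ in the integrand yields the asserted expansion \eqref{eq:leadgrad}.

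The main technical hurdle is the uniform control of $R_{m,n}$. The second derivatives of $\GL$ with respect to $(\bkappa,\omega)$ produce factors of the form $(\omega^2-|\bkappa+\bq|^2)^{-3}$ for $\bq\in\Lambda^*_0(\bar\omega_1)$, which blow up at the singular frequency $|\bkappa^*|$. The resolution is to exploit that $\eps$ is \emph{fixed} in the statement: by Theorem \ref{thm:eigenvalue0}, $\omega_1^{*2}-|\bkappa^*|^2$ is of order $\eps^2>0$, so $(\bkappa,\omega)$ remains at positive distance from the singular set throughout a small enough neighborhood of $(\bkappa^*,\omega_1^*)$ (the radius allowed to depend on $\eps$). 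On such a neighborhood the second derivatives of $\GL$ are bounded, and Taylor's theorem with integral remainder delivers the $O(|\delta\bkappa|^2)+O(|\delta\omega|^2)$ bound claimed in \eqref{eq:leadgrad}.
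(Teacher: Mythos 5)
Your proposal is correct and follows essentially the paper's own route: the paper proves this lemma simply by invoking the decomposition \eqref{eq:GH}--\eqref{eq:tG} and "a standard perturbation argument," which is exactly what you spell out (gradient of $\GL$ as the leading kernel, bounded $(\bkappa,\omega)$-derivatives of $H_0$ and $\tG$ absorbed into the $O(1)$, and second-order remainders controlled for fixed $\eps$ since $(\omega_1^*)^2-|\bkappa^*|^2\sim\eps^2$ keeps $(\bkappa,\omega)$ away from the singular set). Your treatment of the Taylor remainder for $\GL$ is a correct and welcome elaboration of the detail the paper leaves implicit.
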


\begin{theorem}\label{thm:slope}
For fixed $\eps\ll1$, there are two distinct branches of eigenvalues $\omega_1^\pm(\bkappa)$  near the Dirac point $(\bkappa^*,\omega_1^*)$ given by
 \begin{align}
  \omega_1^+(\bkappa) &= \omega_1^*+\frac{1}{3\omega_1^*}\left(\frac{2\pi}{a}\right)|\bkappa-\bkappa^*|\big(1+O(\eps)\big)+O(|\bkappa-\bkappa^*|^2), \label{eq:dis_relation1} \\
  \omega_1^-(\bkappa) &= \omega_1^*-\frac{1}{3\omega_1^*}\left(\frac{2\pi}{a}\right)|\bkappa-\bkappa^*|\big(1+O(\eps)\big)+O(|\bkappa-\bkappa^*|^2). \label{eq:dis_relation2}
\end{align}
\end{theorem}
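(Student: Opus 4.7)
The plan is to solve the $2\times 2$ compatibility system of Lemma~\ref{lem:pertdisp} for the leading behavior in $\delta\bkappa := \bkappa-\bkappa^*$ and $\delta\omega := \omega-\omega_1^*$. Define
\begin{equation*}
 M_{jk}(\bkappa,\omega) := \big\langle (\cI+\cT(\bkappa,\omega))\delta\cA(\bkappa,\omega)\bc_j,\bc_k\big\rangle, \qquad j,k\in\{-1,1\},
\end{equation*}
so that $(\bkappa,\omega)$ lies on the dispersion surface near $(\bkappa^*,\omega_1^*)$ if and only if $\det M(\bkappa,\omega) = 0$. Since $\cT$ vanishes to first order at $(\bkappa^*,\omega_1^*)$ and, by Theorem~\ref{thm:eigenvalue0}, each $\bc_{\pm 1}$ concentrates on the Fourier mode $\phi_{\pm 1}$ modulo an $O(\eps)$ tail, the leading order of $M_{jk}$ is $\delta a_{k,j}(\bkappa,\omega)$. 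Hence the conical behavior is dictated by the four scalar perturbations $\delta a_{k,j}$ with $j,k\in\{-1,1\}$.

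I would then apply Lemma~\ref{lem:leadingGrad}: each $\delta a_{k,j}$ is the $(\phi_k,\phi_j)$-pairing of $\nabla_{\bkappa,\omega}\GL(\bkappa^*,\omega_1^*;\eps(\br(t)-\br(\tau)))$ plus lower-order terms. Writing $\bp_i:=\bkappa^*+\bq_i$ for $\bq_i\in\Lambda_0^*(\bar\omega_1)$, Theorem~\ref{thm:eigenvalue0} yields $\omega_1^{*2}-|\bp_i|^2=2\alpha\eps^2(1+O(\eps))$, so differentiating the resulting $O(\eps^{-2})$ denominators produces the dominant $O(\eps^{-4})$ pieces while differentiating the exponentials gives smaller contributions. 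Taylor-expanding $e^{i\bp_i\cdot\eps(\br(t)-\br(\tau))}$ to second order in $\eps$ and using the Fourier identities
\begin{equation*}
  \int_0^{2\pi}\overline{\phi_{\pm 1}(t)}(\bp_i\cdot\br(t))\,dt = \sqrt{\pi/2}\,(p_{i,1}\mp ip_{i,2})
\end{equation*}
reduce each $\delta a_{k,j}$ to a finite sum over $i$ of products of $\zeta_i^{\pm}:=p_{i,1}\mp ip_{i,2}$.

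The rotational symmetry $R\bp_1=\bp_3$, $R\bp_3=\bp_2$ (cf.\ Lemma~\ref{lem:G_symm2}) makes $\zeta_i^-$ transform by the cube root of unity $e^{2\pi i/3}$, yielding
\begin{equation*}
 \sum_{i=1}^{3}\zeta_i^-\zeta_i^+=3|\bkappa^*|^2,\qquad \sum_{i=1}^{3}(\zeta_i^-)^2=0,\qquad \sum_{i=1}^{3}p_{i,\alpha}(\zeta_i^-)^2 = \tfrac{3}{2}(\zeta_1^-)^3(\delta_{\alpha,1}+i\delta_{\alpha,2}).
\end{equation*}
The first identity gives $M_{1,1}=M_{-1,-1}=\partial_\omega a_{1,1}(\bkappa^*,\omega_1^*)\,\delta\omega\,(1+O(\eps))$ with $\partial_\omega a_{1,1}(\bkappa^*,\omega_1^*)=-\omega_1^*|Y|/(3\pi|\bkappa^*|^2\eps^2)(1+O(\eps))$; the second recovers $a_{\pm 1,\mp 1}(\bkappa^*,\omega_1^*)=0$, consistent with Lemma~\ref{lem:amn}(iii); and the third delivers $M_{1,-1}$ proportional to $(\delta k_1+i\delta k_2)$ with off-diagonal constant determined by $(\zeta_1^-)^3=-i|\bkappa^*|^3$. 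Lemma~\ref{lem:amn}(i) further yields $M_{-1,1}=\overline{M_{1,-1}}$, so $\det M = M_{1,1}^2-|M_{1,-1}|^2$ vanishes precisely when
\begin{equation*}
 (\delta\omega)^2 = \Big(\frac{|\bkappa^*|}{2\omega_1^*}\Big)^2|\delta\bkappa|^2\,(1+O(\eps))+O(|\delta\bkappa|^3+|\delta\omega|^3).
\end{equation*}
Since $|\bkappa^*|=4\pi/(3a)$, the slope reduces to $(2\pi/a)/(3\omega_1^*)$, matching \eqref{eq:dis_relation1}--\eqref{eq:dis_relation2}.

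The main obstacle is twofold: first, rigorously absorbing the correction $\cT$ and the $O(\eps)$ Fourier tails of $\bc_{\pm 1}$ into an $O(\eps)$ error on the slope, which requires Proposition~\ref{prop:decomp_A} and the inverse estimate of Theorem~\ref{thm:A_inv} to control the coupling of $\delta\cA$ to Fourier modes other than $\phi_{\pm 1}$; second, the bookkeeping of the $\eps^{-4}$ prefactors so that they cancel cleanly between $M_{1,1}^2$ and $|M_{1,-1}|^2$ and leave only the geometric ratio. Once $\det M$ has the displayed form, an application of Rouch\'e's theorem as in the proof of Theorem~\ref{thm:eigenvalue0} upgrades this leading-order analysis to existence of the two branches $\omega_1^{\pm}(\bkappa)$ near $\bkappa^*$.
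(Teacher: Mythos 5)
Your proposal is correct and takes essentially the same route as the paper: it starts from the $2\times 2$ solvability system of Lemma~\ref{lem:pertdisp}, reduces each entry to the leading gradient of $\GL$ via Lemma~\ref{lem:leadingGrad} (the $O(\eps^{-4})$ terms from differentiating the resonant denominators), expands the exponentials to second order, and exploits the threefold rotational symmetry of $\Lambda_0^*(\bar\omega_1)$ to show the diagonal is $\propto\delta\omega$ and the off-diagonal $\propto|\bkappa-\bkappa^*|$, exactly as in the paper's computation with the matrices $A_1,A_2,A_3$. The only blemishes are phase-sign slips in your third symmetry identity and in the value of $(\zeta_1^-)^3$ (which should be $+i|\bkappa^*|^3$), but these affect only the phase of $M_{1,-1}$, not its modulus, so the slope $\frac{1}{3\omega_1^*}\left(\frac{2\pi}{a}\right)$ is unaffected.
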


\begin{proof}
By Lemma \ref{lem:pertdisp}, in the neighborhood of  $(\bkappa^*,\omega_1^*)$, the pair $(\bkappa,\omega)$ satisfies 
\begin{equation*}
    \mbox{det}\left(
    \begin{matrix}
    \big\langle (\cI + \cT) \delta\cA(\bkappa,\omega)\bc_{-1},\bc_{-1}\big\rangle & \big\langle (\cI + \cT) \delta\cA(\bkappa,\omega)\bc_{1},\bc_{-1}\big\rangle \\
    \\
    \big\langle (\cI + \cT) \delta\cA(\bkappa,\omega)\bc_{-1},\bc_{1}\big\rangle & \big\langle (\cI + \cT) \delta\cA(\bkappa,\omega)\bc_{1},\bc_{1}\big\rangle
    \end{matrix}\right) =0.
\end{equation*}
Recall that $\cT(\bkappa,\omega) := - \delta\cA(\bkappa,\omega)\Big(\cI +\cA_\perp^{-1}(\bkappa^*,\omega_1^*)\delta\cA(\bkappa,\omega) \Big)^{-1} \cA_\perp^{-1}(\bkappa^*,\omega_1^*)$,
using Lemma~\ref{lem:leadingGrad}, the above equation reads
\begin{equation}\label{eq:pertdisp1}
    \mbox{det}\left(
    \begin{matrix}
    \big\langle \delta\cA(\bkappa,\omega)\bc_{-1},\bc_{-1}\big\rangle & \big\langle  \delta\cA(\bkappa,\omega)\bc_{1},\bc_{-1}\big\rangle \\
    \\
    \big\langle  \delta\cA(\bkappa,\omega)\bc_{-1},\bc_{1}\big\rangle & \big\langle  \delta\cA(\bkappa,\omega)\bc_{1},\bc_{1}\big\rangle
    \end{matrix}\right) =0
\end{equation}
up to the first order in terms of $\delta\bkappa$ and $\delta\omega$ for each entry of the matrix.

Since the $n$th component of the vectors $\bc_1$ and $\bc_{-1}$ is $0$ when $\bmod(n,3)=0$,
we only need to consider the components of $\delta a_{m,n}$ for $\bmod(m,3)\neq0$ and $\bmod(n,3)\neq0$
in \eqref{eq:pertdisp1}. 
To this end, we define the set of indices relevant to \eqref{eq:pertdisp1} as
$$ E:=\{ (m,n) \in \bbZ \times \bbZ: \bmod(m,3)\neq0 \; \mbox{and} \bmod(n,3)\neq0 \}, $$
and call such $\delta a_{m,n}$ relevant to \eqref{eq:pertdisp1}.
The complement of $E$ is denoted by $E^c$.
The component $\delta a_{m,n}$ with $(m,n)\in E^c$ 
are irrelavent to \eqref{eq:pertdisp1} and we can neglect it.

To obtain the perturbation $\delta\cA(\bkappa,\omega)$, we first compute the gradient $\nabla_{\bkappa,\omega}
\GL(\bkappa^*,\omega_1^*;\eps(\br(t)-\br(\tau)))$. The partial derivatives of $g_j(\bkappa^*,\omega_1^*,\bx)$ with respect to $\bkappa=(\kappa_1,\kappa_2)$ and $\omega$ are
\begin{equation*}
\begin{aligned}
    \partial_{\kappa_1}g_j(\bkappa^*,\omega_1^*,\bx)&=\frac{ix_1e^{i(\bkappa^*+\bq_j)\cdot \bx}}{(\omega_1^*)^2-|\bkappa^*|^2} + \frac{2(\kappa^*_1+q_{j,1})e^{i(\bkappa^*+\bq_j)\cdot \bx}}{\big((\omega_1^*)^2-|\bkappa^*|^2\big)^2},\\
    \partial_{\kappa_2}g_j(\bkappa^*,\omega_1^*,\bx)&=\frac{ix_2e^{i(\bkappa^*+\bq_j)\cdot \bx}}{(\omega_1^*)^2-|\bkappa^*|^2} + \frac{2(\kappa^*_2+q_{j,2})e^{i(\bkappa^*+\bq_j)\cdot \bx}}{\big((\omega_1^*)^2-|\bkappa^*|^2\big)^2},\\
   \partial_{\omega_1^*}g_j(\bkappa^*,\omega_1^*,\bx)&=\frac{-2\omega e^{i(\bkappa^*+\bq_j)\cdot \bx}}{\big((\omega_1^*)^2-|\bkappa^*|^2\big)^2}.
\end{aligned}
\end{equation*}
Here we have used the relation $|\bkappa^*+\bq_j|^2=|\bkappa^*|^2$ for $j=1,2,3$.
By the Taylor expansion and using the fact that $(\omega_1^*)^2-|\bkappa^*|^2=O(\eps^2)$, we obtain 
\begin{eqnarray}
\partial_{\kappa_1} \GL(\bkappa^*,\omega_1^*;\eps(\bx-\by)) &=& \left(\frac{2\pi}{a}\right)\frac{2}{\big((\omega_1^*)^2-|\bkappa^*|^2\big)^2}\frac{1}{\sqrt3}
\Bigg(-i\frac{2}{3}\frac{2\pi}{a}\eps (x_2-y_2) \nonumber\\
&&- \frac{2}{3\sqrt3}\left(\frac{2\pi}{a}\right)^2\eps^2 (x_1-y_1)(x_2-y_2)\Bigg)+O\left(\frac{1}{\eps}\right),  \label{eq:partial_kappa1} \\
\partial_{\kappa_2} \GL(\bkappa^*,\omega_1^*;\eps(\bx-\by)) &=& \left(\frac{2\pi}{a}\right)\frac{2}{\big((\omega_1^*)^2-|\bkappa^*|^2\big)^2}\frac{1}{3}\Bigg(-i2\frac{2\pi}{a}\eps (x_2-y_2) \nonumber \\ &&-\frac{1}{3}\left(\frac{2\pi}{a}\right)^2\eps^2\left((x_1-y_1)^2-(x_2-y_2)^2\right)\Bigg)
+O\left(\frac{1}{\eps}\right),  \label{eq:partial_kappa2} \\
 \partial_{\omega}  \GL(\bkappa^*,\omega_1^*;\eps(\bx-\by)) &=& \frac{-2\omega_1^*}{\big((\omega_1^*)^2-|\bkappa^*|^2\big)^2}\left(3-\frac{1}{3}\left(\frac{2\pi}{a}\right)^2\eps^2|\bx-\by|^2\right)+O\left(\frac{1}{\eps}\right). \nonumber  \\ \label{eq:partial_omega}
\end{eqnarray}
Next we compute the leading-order terms in $\delta a_{m,n}$ defined in \eqref{eq:leadgrad}. The calculation is based on the relation
\begin{equation}
\begin{aligned}
\br(t)-\br(\tau)&=\big(\cos(t)-\cos(\tau),\sin(t)-\sin(\tau)\big)\\
&=\left(\frac{1}{2}(e^{it}+e^{-it}-e^{i\tau}-e^{-i\tau}),\,\frac{1}{2i}(e^{it}-e^{-it}-e^{i\tau}+e^{-i\tau})\right).
\end{aligned}
\end{equation}

First, the leading-order term $x_2-y_2$ in the kernel $\partial_{\kappa_1} G_0(\bkappa^*,\omega_1^*;\eps(\bx-\by))$ in \eqref{eq:partial_kappa1} takes the following form 
$$x_2-y_2=\frac{1}{2i}(e^{it}-e^{-it}-e^{i\tau}+e^{-i\tau})=\frac{\pi}{i}(\phi_1(t)\phi_0(\tau)-\phi_{-1}(t)\phi_0(\tau))-\phi_0(t)\phi_1(\tau)+\phi_0(t)\phi_{-1}(\tau).$$
The term contributes $\frac{\pi}{i},-\frac{\pi}{i},-\frac{\pi}{i},\frac{\pi}{i}$ to the element $\delta a_{m,n}$ in \eqref{eq:leadgrad} for $(m,n)=(1,0),(-1,0),(0,-1),(0,1)$ respectively. However,
each of these pairs of $(m,n) \notin E$ since either $m=0$ or $n=0$, and the corresponding element $\delta a_{m,n}$  will not contribute to the equation \eqref{eq:pertdisp1}.

The next leading-order term, $(x_1-y_1)(x_2-y_2)$ in $\partial_{\kappa_1} G_0(\bkappa^*,\omega_1^*;\eps(\bx-\by))$ in \eqref{eq:partial_kappa1}, takes the following form
\begin{equation*}
\begin{aligned}
    (x_1-y_1)(x_2-y_2)=&\frac{1}{2}(e^{it}+e^{-it}-e^{i\tau}-e^{-i\tau})\cdot \frac{1}{2i}(e^{it}-e^{-it}-e^{i\tau}+e^{-i\tau})\\
    =&\frac{\pi}{2i}(-\phi_1(t)\phi_1(\tau)+\phi_1(t)\phi_{-1}(\tau)-\phi_{-1}(t)\phi_1(\tau)+\phi_{1}(t)\phi_{-1}(\tau)\\
     &-\phi_1(t)\phi_1(\tau)+\phi_{-1}(t)\phi_1(\tau)-\phi_1(t)\phi_{-1}(\tau)+\phi_{-1}(t)\phi_{-1}(\tau))+\text{remainder terms}\\
    =&\frac{\pi}{i}(-\phi_1(t)\phi_1(\tau)+\phi_{-1}(t)\phi_{-1}(\tau))+\text{remainder terms}.
\end{aligned}
\end{equation*}
In the above, the remainder terms only contribute to $\delta a_{m,n}$ for $(m,n) \in E^c$ and we can neglect them.
Thus the term $(x_1-y_1)(x_2-y_2)$ contributes $-\frac{\pi}{i},\frac{\pi}{i}$ to $\delta a_{m,n}$ in \eqref{eq:leadgrad} for $(m,n)=(1,-1),(-1,1) \in E$ respectively.

Similarly, it is straightforward to calculate and verify that among the relevant elements in $\delta a_{m,n}$, the term $(x_1-y_1)^2-(x_2-y_2)^2$ in $\partial_{\kappa_2} \GL(\bkappa^*,\omega_1^*;\eps(\bx-\by))$ of \eqref{eq:partial_kappa2} contributes $-2\pi,-2\pi$ to $(1,-1),(-1,1)$ respectively, and the term $|\bx-\by|^2$ in $\partial_{\omega} \GL(\bkappa^*,\omega_1^*;\eps(\bx-\by))$ of \eqref{eq:partial_omega} contributes $-2\pi,-2\pi$ to $(1,-1),(-1,1)$ respectively.

In view of the expansions \eqref{eq:partial_kappa1}-\eqref{eq:partial_omega} and summing up their contributions to $\delta\cA$, we see that each entry in the $2\times2$ matrix of \eqref{eq:pertdisp1} takes the form
\begin{equation}
\begin{aligned}
   \big\langle \delta\cA(\bkappa,\omega)\bc_{i},\bc_{j}\big\rangle  = & \left(\frac{2\pi}{a}\right)^2\frac{\eps^2}{\big((\omega_1^*)^2-|\bkappa^*|^2\big)^2}\Bigg(\frac{-4}{9}\left(\frac{2\pi}{a}\right)\delta\kappa_1\langle A_1 \bc_i,\bc_j\rangle \\
   & -\frac{2}{9}\left(\frac{2\pi}{a}\right)\delta\kappa_2\langle A_2
   \bc_i,\bc_j\rangle 
   + \delta\omega\frac{2\omega_1^*}{3}\langle A_3
   \bc_i,\bc_j\rangle\Bigg) +O\left(\frac{1}{\eps}\max(|\delta\bkappa|,|\delta\omega|)\right),
\end{aligned}    
\end{equation}
where 
$$
A_1=\left(
   \begin{matrix}
   \ddots &&&&\\
          &0 &*&\frac{\pi}{i}&\\
          &* &**&*&\\
          &-\frac{\pi}{i} &* &0&\\
          &&&&\ddots\\
   \end{matrix}\right), A_2=\left(
   \begin{matrix}
   \ddots &&&&\\
          &0 &*&-2\pi&\\
          &* &**&*&\\
          &-2\pi &* &0&\\
          &&&&\ddots\\
   \end{matrix}\right), 
   A_3=\left(
   \begin{matrix}
   \ddots &&&&\\
          &-2\pi &*&0&\\
          &* &**&*&\\
          &0 &* &-2\pi&\\
          &&&&\ddots\\
   \end{matrix}\right).
$$
The element with a double star corresponds to the $(0,0)$ entry, and the elements that are labeled as $*$, $**$ or are not displayed are either  $0$ or irrelevant to \eqref{eq:pertdisp1}.
Therefore, up to the leading orders in $\delta\bkappa$ and $\delta\omega$, the relation between $\delta\bkappa$ and $\delta\omega$ is given by
\begin{equation*}
\det\left(
   \begin{matrix}
   \medskip
          \big(-\frac{4}{3}\omega_1^*+O(\eps)\big)\delta\omega& \frac{2\pi}{a}\Big(\big(-\frac{4}{9i}+O(\eps)\big)\delta\kappa_1+\big(\frac{4}{9}+O(\eps)\big)\delta\kappa_2\Big)\\
          \frac{2\pi}{a}\Big(\big(\frac{4}{9i}+O(\eps)\big)\delta\kappa_1+\big(\frac{4}{9}+O(\eps)\big)\delta\kappa_2 \Big) &\big(-\frac{4}{3}\omega_1^*+O(\eps)\big)\delta\omega
   \end{matrix}\right)=0.
\end{equation*}
Hence the dispersion relations \eqref{eq:dis_relation1}-\eqref{eq:dis_relation2} follow and the proof is complete.
\end{proof}

\section{Dirac points for higher frequency bands of the Dirichlet problem}
For $\bkappa=\bkappa^*$, the eigenvalues located at higher bands lie in the neighborhood of the singular frequency $\bar\omega_n(\bkappa^*)\in\Omega_{\text{sing}}(\bkappa^*)$ for $n=2, 3, \cdots$. It can be calculated explicitly that
\begin{equation*}
   \bar\omega_2(\bkappa^*) = 2|\bkappa^*|, \,\,\,
   \bar\omega_3(\bkappa^*) = \sqrt{7} \, |\bkappa^*|,\,\,\,
   \bar\omega_4(\bkappa^*) = \sqrt{13} \, |\bkappa^*|,\,\,\,
   \bar\omega_5(\bkappa^*) = 4|\bkappa^*|, \cdots
\end{equation*}
As shown below, the studies of Dirac points near $\bar\omega_n(\bkappa^*)$ are parallel to the Dirac point at the crossing of the first two band surfaces studied in Sections 3 and 4. The difference only lies in the choice of the set of vectors
 $   \Lambda^*_0(\bar\omega_n):=\left\{\bq\in\Lambda^*: |\bkappa^*+\bq|=|\bar\omega_n(\bkappa^*)|\right\}$
and the decomposition of the Green function \eqref{eq:GH}-\eqref{eq:tG} associated with the set $\Lambda^*_0(\bar\omega_n)$.
The cardinal number of the set $\Lambda^*_0(\bar\omega_n)$ is a multiple of $3$. 
Here we only discuss the Dirac point near $\bar\omega_2(\bkappa^*)$ and $\bar\omega_3(\bkappa^*)$ when the cardinal number of $\Lambda^*_0(\bar\omega_2)$ and $\Lambda^*_0(\bar\omega_3)$ is $3$ and $6$ respectively. We highlight the similarities and differences in theses two scenarios. The Dirac points near $\bar\omega_n(\bkappa^*)$ for $n>3$ can be obtained similarly.

\begin{remark}
We point out that there also exist Dirac points when the Bloch wave vector $\bkappa\neq\bkappa^*$ for higher frequency bands. This is illustrated in the left panel of Figure \ref{fig:band_structure2}, where Dirac points appear at the crossing of the seventh and eighth, and the tenth and eleventh band surfaces respectively between the $M\Gamma$ segments in the Brillouin zone. The mathematical analysis of these Dirac points are beyond the scope of this paper and will be investigated in the future.
\end{remark}

\begin{figure}[!htbp]
    \centering
    \includegraphics[width=11cm]{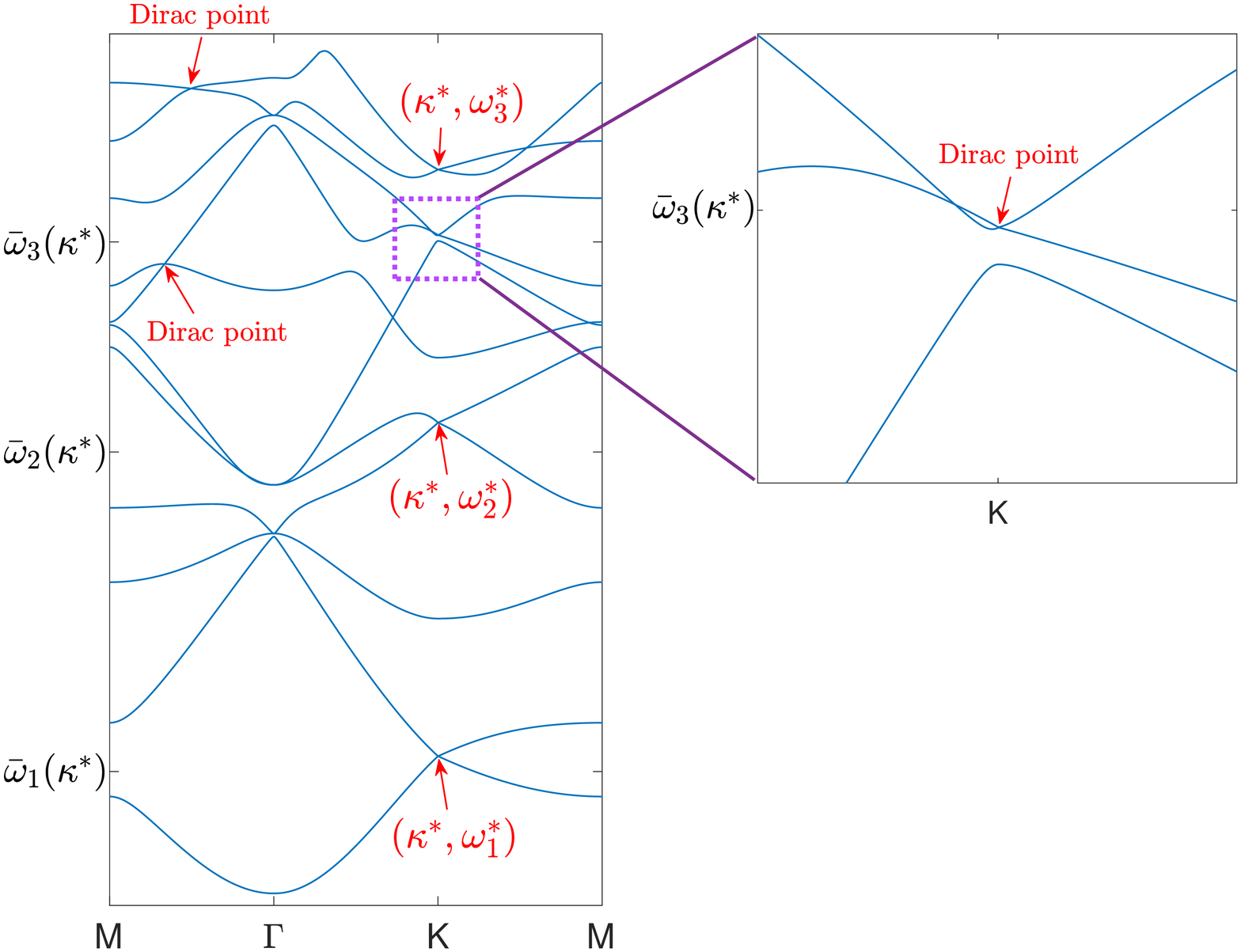}
    \vspace*{-15pt}
    \caption{Left: the dispersion curves along the segments $M\to\Gamma\to K \to M$ over the Brillouin zone when $\eps=0.1$. When $\bkappa=\bkappa^*$, Dirac points appear at the crossing of the first and second, the fourth and fifth, the tenth and eleventh band surfaces, respectively. Dirac points also appear at the crossing of the seventh and eighth, and the tenth and eleventh band surfaces between the $M\Gamma$ segments.
    Right: Zoomed view of the crossing of the eighth and ninth band surfaces. One Dirac point is formed by these two band surfaces in the $O(\eps^4)$ neighborhood of the singular frequency $\bar\omega_3(\bkappa^*)$ when $\bkappa^*=K$. }
    \label{fig:band_structure2} 
    \vspace*{-10pt}
\end{figure}

\subsection{Dirac point near $\bar\omega_2(\bkappa^*)$}
The set $\Lambda^*_0(\bar\omega_2)=\left\{\bq_1,\bq_2,\bq_3\right\}$, where
$\bq_1=\frac{2\pi}{a}(-\sqrt3,-1)^T$,
$\bq_2=\frac{2\pi}{a}(\frac{1}{\sqrt{3}},-1)^T$,  and $\bq_3=\frac{2\pi}{a}(-\frac{1}{\sqrt3},1)^T$.
To obtain the Dirac point, we solve the characteristic values of the systems \eqref{eq:subsys1}-\eqref{eq:subsys3} in the region: 
\begin{equation*}
    \Omega_\eps(2\bkappa^*): = \left\{ \omega \in \mathbb R^+: \frac{1}{\abs{Y}} \left(\frac{2\pi}{a}\right)^2\eps^2 \le \omega^2-\abs{2\bkappa^*}^2 \le \frac{2}{\abs{Y}} \left(\frac{2\pi}{a}\right)^2\eps^2  \right\}.
\end{equation*}
Using the set $\Lambda^*_0(\bar\omega_2)$ above, the Green function $G(\bkappa^*,\omega,\bx)$ is decomposed into the three parts as follows:
\begin{equation*}
 H_0(\omega; \bx):= -\frac{i}{4}  H_0^{(1)}(\omega|\bx|),
\end{equation*}
\begin{equation*}
 \GL(\bkappa, \omega;\bx):=\frac{1}{|Y|}\sum_{\bq\in \Lambda_0^*(\bar\omega_2)} \frac{e^{i(\bkappa+\bq)\cdot\bx}}{\omega^2-|\bkappa+\bq|^2},
\end{equation*}
and 
\begin{equation*}
 \tG(\bkappa, \omega; \bx):=G(\bkappa,\omega;\bx)-H_0(\omega;\bx)-\GL(\bkappa,\omega;\bx).
\end{equation*}
Similar to \eqref{eq:GL}, $\GL(\bkappa, \omega;\bx)$ above also consists of three modes and it extracts the singularity of the Green's function when $\omega$ approaches the singular frequency $\bar\omega_2(\bkappa^*)$. In addition, it attains the Taylor expansion 
\begin{equation*}
     \GL(\bkappa^*,\omega;\bx)=\frac{1}{|Y|}\frac{1}{\omega^2-|2\bkappa^*|^2} \left( 3 - \frac{4}{3}\left(\frac{2\pi}{a}\right)^2 |\bx|^2\right)+\GL^\infty(\bkappa^*,\omega;\bx).
\end{equation*}
Therefore, by repeating calculations in Section 3, it can be shown that for $\omega\in \Omega_{2\eps}(\bkappa^*)$, 
the matrix $\cA$ can be decomposed as $\cA=\cD+ \eps \, \cE$, where
$\cD:=\diag(d_n)_{n\in\mathbb Z}$ with
\begin{equation*}
   d_n=
   \begin{cases}
  \dfrac{1}{2\pi}(\ln\eps+\ln\omega+\gamma_0))+\dfrac{1}{|Y|}\dfrac{1}{\omega^2-|2\bkappa^*|^2}\left(3- \dfrac{16\pi\eps^2}{3}\left(\dfrac{2\pi}{a}\right)^2\right)+\tG(\bkappa^*,\omega;\mathbf{0}) & n=0, \\
    -\dfrac{1}{2}+\dfrac{1}{|Y|}\dfrac{1}{\omega^2-|2\bkappa^*|^2}\dfrac{8\pi\eps^2}{3}\left(\dfrac{2\pi}{a}\right)^2, & n=\pm1, \\
   -\dfrac{1}{2|n|}, & |n|>1,
   \end{cases}
\end{equation*}
and $\cE=[e_{m,n}]$ is bounded from $\mathbb{H}^{-1/2}$ to $\mathbb{H}^{1/2}$.
As such the subsystems \eqref{eq:subsys2}-\eqref{eq:subsys3} reduce again to the characteristic equation \eqref{eq:charac}, and we can obtain the Dirac point, which is formed by the crossing of the fourth and fifth band surfaces shown in Figure \ref{fig:band_structure2} (left).

\begin{theorem}\label{thm:eigenvalue2}
Let $\bkappa=\bkappa^*$ and $\eps\ll1$,  \eqref{eq:eigen_prob}-\eqref{eq:Dirichlet} attains an eigenvalue  
\begin{equation*}
     \omega_2^*  = \abs{2\bkappa^*} +  \frac{2\alpha}{\abs{\bkappa^*}} \cdot \eps^2 + O\left(\eps^3\right)
\end{equation*}
in $\Omega_{\eps}(2\bkappa^*)$, with the corresponding eigenspace
$$ V_2^*:=\mbox{span}\left\{ \cS_\eps^{\bkappa,\omega} \varphi_1,  \cS_\eps^{\bkappa,\omega} \varphi_{-1} \right\}, \mbox{wherein} \; \varphi_1(t) = e^{it} + O(\eps) \; \mbox{and} \;  \varphi_{-1}(t) = e^{-it} + O(\eps).  $$
In the neighborhood of $(\bkappa^*,\omega_2^*)$, there are two distinct branches of eigenvalues given by
\begin{equation*}
     \omega_2^\pm(\bkappa) =  \omega_2^* \pm \frac{4}{3\omega_2^*}\left(\frac{2\pi}{a}\right)|\bkappa-\bkappa^*|\big(1+O(\eps)\big)+O(|\bkappa-\bkappa^*|^2). 
\end{equation*}
\end{theorem}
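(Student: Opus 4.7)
The plan is to mirror the three-step programme of Sections~3 and~4 with the singular-frequency set $\Lambda_0^*(\bar\omega_1)$ replaced by $\Lambda_0^*(\bar\omega_2)=\{\bq_1,\bq_2,\bq_3\}$ and the Green's-function decomposition \eqref{eq:GH}-\eqref{eq:tG} re-adapted to $\bar\omega_2(\bkappa^*)=2|\bkappa^*|$. The crucial structural fact is that the new triple $\Lambda_0^*(\bar\omega_2)$ is again a single $R$-orbit, so Lemma~\ref{lem:G_symm2} and hence the block decomposition of $\cA$ into the three subsystems \eqref{eq:subsys1}-\eqref{eq:subsys3} carry over unchanged, together with the decomposition $\cA=\cD+\eps\cE$ recorded in the excerpt (with the entries $d_0,d_{\pm1}$ now involving $|2\bkappa^*|^2$ and the constants $3$, $\tfrac{16\pi}{3}$, $\tfrac{8\pi}{3}$ coming from the three-term finite sum at $\bar\omega_2$).

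For the existence and expansion of $\omega_2^*$, I would invoke Theorem~\ref{thm:A_inv} and Proposition~\ref{prop:charac} verbatim -- the proofs only use that $\hat\cD_j$ dominates $\eps\hat\cE_j$ when $1/(\omega^2-|2\bkappa^*|^2)=O(1/\eps^2)$, which holds on $\Omega_\eps(2\bkappa^*)$. For $j=\pm1$, substitution into \eqref{eq:charac} gives
\begin{equation*}
   -\tfrac12+\frac{4\alpha\,\eps^2}{\omega^2-|2\bkappa^*|^2}+O(\eps)=0,
   \qquad \alpha=\tfrac{2\pi}{3|Y|}\!\left(\tfrac{2\pi}{a}\right)^{\!2},
\end{equation*}
in direct analogy with \eqref{eq:chara_1} but with $\alpha$ replaced by $4\alpha$; solving yields $\omega^2-|2\bkappa^*|^2 = 8\alpha\eps^2(1+O(\eps))$ and hence $\omega_2^*=|2\bkappa^*|+\tfrac{2\alpha}{|\bkappa^*|}\eps^2+O(\eps^3)$ after a square-root expansion. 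Rouch\'e's theorem places the root inside $\Omega_\eps(2\bkappa^*)$, Proposition~\ref{prop_repeated_eig} transports it from the $j=1$ to the $j=-1$ system, and formulas \eqref{eq:bcj}-\eqref{eq:cj} produce the eigenfunctions $\varphi_{\pm1}(t)=e^{\pm it}+O(\eps)$ and the two-dimensional eigenspace $V_2^*$. A sanity check shows the $j=0$ equation cannot contribute in this window: its root would satisfy $\omega^2-|2\bkappa^*|^2 \sim 1/|\ln\eps|$, which exceeds the upper cutoff of $\Omega_\eps(2\bkappa^*)$.

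For the conical dispersion, I would reuse Lemmas~\ref{lem:pertdisp}-\ref{lem:leadingGrad} and Theorem~\ref{thm:slope} with the new gradient $\nabla_{(\bkappa,\omega)}\GL(\bkappa^*,\omega_2^*;\eps(\bx-\by))$. The Fredholm reduction to the $2\times2$ determinant \eqref{eq:pertdisp1} on $\mathbb{H}_*^{-1/2}=\mathrm{span}\{\bc_1,\bc_{-1}\}$ is identical, and the only new ingredient is the explicit computation of \eqref{eq:partial_kappa1}-\eqref{eq:partial_omega} with the new triple. Since $|\bkappa^*+\bq_j|=2|\bkappa^*|$ in this case, the relevant quadratic-in-$(\bx-\by)$ coefficients that feed the index set $E$ pick up an overall factor of $4$ relative to Section~4 (precisely the same factor $4$ that turns $\tfrac{1}{3}(\tfrac{2\pi}{a})^2$ into $\tfrac{4}{3}(\tfrac{2\pi}{a})^2$ in the Taylor expansion of $\GL$). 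Propagating this factor through the three matrices $A_1,A_2,A_3$ in the proof of Theorem~\ref{thm:slope} and then through the $2\times 2$ determinant replaces the slope $\tfrac{1}{3\omega_1^*}(\tfrac{2\pi}{a})$ by $\tfrac{4}{3\omega_2^*}(\tfrac{2\pi}{a})$, giving \eqref{eq:dis_relation1}-\eqref{eq:dis_relation2} with the stated constants.

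The main obstacle is purely bookkeeping: one must verify that the symmetry-induced cancellations that eliminated the $E^c$-indexed terms in Section~4 persist here. This is guaranteed because $\Lambda_0^*(\bar\omega_2)$ is again a single $R$-orbit of cardinality three, so Lemma~\ref{lem:amn}(iii) still gives a sparsity pattern for $\cA$ supported on indices with $m-n\equiv0\ (\mathrm{mod}\ 3)$, and the same parity arguments used in the expansions of $(x_1-y_1)^2-(x_2-y_2)^2$ and $(x_1-y_1)(x_2-y_2)$ select exactly the entries $(m,n)=(\pm1,\mp1)$ as the only ones relevant to \eqref{eq:pertdisp1}. Once the gradient and these entries are tabulated, both the quadratic eigenvalue correction and the linear Dirac slope follow from the same determinant identity as in Section~4.
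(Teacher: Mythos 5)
Your treatment of the eigenvalue is correct and follows the paper's route exactly: the second shell $\Lambda_0^*(\bar\omega_2)$ is a single $R$-orbit summing to zero, so Lemma \ref{lem:amn}(iii), the decomposition $\cA=\cD+\eps\,\cE$, Theorem \ref{thm:A_inv} and Proposition \ref{prop:charac} transfer verbatim, and the $j=\pm1$ characteristic equation $-\tfrac12+\tfrac{4\alpha\eps^2}{\omega^2-|2\bkappa^*|^2}+O(\eps)=0$ gives $\omega_2^*=|2\bkappa^*|+\tfrac{2\alpha}{|\bkappa^*|}\eps^2+O(\eps^3)$ with the two-dimensional eigenspace via Proposition \ref{prop_repeated_eig} and \eqref{eq:bcj}--\eqref{eq:cj}. (A minor point: with the root at $\omega^2-|2\bkappa^*|^2=8\alpha\eps^2(1+O(\eps))=\tfrac{16\pi}{3|Y|}\bigl(\tfrac{2\pi}{a}\bigr)^2\eps^2(1+O(\eps))$, the window $\Omega_\eps(2\bkappa^*)$ as written does not contain it; the constants in the window must be adjusted before Rouch\'e's theorem can "place the root inside" it.)

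The genuine gap is the slope. Your argument — "the quadratic-in-$(\bx-\by)$ coefficients pick up an overall factor of $4$, and propagating this factor through $A_1,A_2,A_3$ replaces $\tfrac{1}{3\omega_1^*}\bigl(\tfrac{2\pi}{a}\bigr)$ by $\tfrac{4}{3\omega_2^*}\bigl(\tfrac{2\pi}{a}\bigr)$" — is not a valid derivation: a factor common to all three matrices multiplies every entry of the $2\times2$ matrix in \eqref{eq:pertdisp1} and cancels identically in the determinant, so it cannot change the slope at all. What actually changes is that the $\delta\omega$-coefficient and the $\delta\bkappa$-coefficients scale differently. In $\partial_\omega\GL$ the relevant coefficient is $\tfrac{1}{3}\to\tfrac{4}{3}$ (factor $4$, from $|\bkappa^*+\bq_j|^2=4|\bkappa^*|^2$), whereas in $\partial_{\kappa_1}\GL$, $\partial_{\kappa_2}\GL$ the relevant third-moment terms carry an extra factor $\bkappa^*+\bq_j$ from $\nabla_\bkappa|\bkappa+\bq_j|^2=2(\bkappa+\bq_j)$ in addition to the quadratic-in-$(\bx-\by)$ part, so they scale by $2\cdot4=8$; moreover the new orbit has the opposite orientation ($\bkappa^*+\bq_3$ points along $+x_2$ rather than $-x_2$), which alters phases in the analogues of $A_1,A_2$. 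Redoing the computation of \eqref{eq:partial_kappa1}--\eqref{eq:partial_omega} for the new shell, the off-diagonal entries of \eqref{eq:pertdisp1} are $8$ times and the diagonal entries $4$ times those of Section 4 (with $\omega_1^*$ replaced by $\omega_2^*$), which yields a slope of the form $\tfrac{2}{3\omega_2^*}\bigl(\tfrac{2\pi}{a}\bigr)$, i.e.\ the ratio of the resonant radius to $2\omega_2^*$, not $\tfrac{4}{3\omega_2^*}\bigl(\tfrac{2\pi}{a}\bigr)$. In other words, your shortcut does not establish the stated constant, and the explicit gradient computation must be carried out (and reconciled with the constant appearing in the theorem) before the second half of the statement can be considered proved.
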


\subsection{Dirac point near $\bar\omega_3(\bkappa^*)$}
Different from the studies of Dirac points near $\bar\omega_1(\bkappa^*)$ and $\bar\omega_2(\bkappa^*)$, here the set $\Lambda^*_0(\bar\omega_3)$ consists of 6 vectors:
\begin{eqnarray*}
&& \bq_1=\frac{2\pi}{a}\left(-\frac{2}{\sqrt3},-2\right)^T, \; \bq_2=\frac{2\pi}{a}(-\sqrt3,1)^T, \; \bq_3=\frac{2\pi}{a}(0, -2)^T, \\
&& \bq_4=\frac{2\pi}{a}\left(\frac{1}{\sqrt3},1\right)^T, \; \bq_5=\frac{2\pi}{a}\left(-\frac{4}{\sqrt3},0\right)^T, \; \bq_6=\frac{2\pi}{a}\left(\frac{2}{\sqrt3},0\right)^T.
\end{eqnarray*}
Correspondingly, we define
\begin{equation*}
     \GL(\bkappa, \omega;\bx):=\frac{1}{|Y|}\sum_{\bq\in \Lambda_0^*(\bar\omega_3)} \frac{e^{i(\bkappa+\bq)\cdot\bx}}{\omega^2-|\bkappa+\bq|^2},
\end{equation*}
which attains the Taylor expansion 
\begin{equation*}
     \GL(\bkappa^*,\omega;\bx)=\frac{1}{|Y|}\frac{1}{\omega^2-7|\bkappa^*|^2} \left( 6 - \frac{14}{3}\left(\frac{2\pi}{a}\right)^2 |\bx|^2\right)+\GL^\infty(\bkappa^*,\omega;\bx).
\end{equation*}

First, repeating the procedure in Section 4, it can be shown that in the $O(\eps^2)$ neighborhood of the singular frequency $\bar\omega_3(\bkappa^*)$,
there exits a Dirac point given by
\begin{equation*}
     \omega_3^*  = \sqrt{7}\abs{\bkappa^*} +  \frac{2\sqrt{7}\alpha}{\abs{\bkappa^*}} \cdot \eps^2 + O\left(\eps^3\right).
\end{equation*}
The two branches of eigenvalues near $(\bkappa^*,\omega_3^*)$ are
\begin{equation*}
  \omega_3^\pm(\bkappa) = \omega_3^*\pm\frac{20}{21\omega_3^*}\left(\frac{2\pi}{a}\right)|\bkappa-\bkappa^*|\big(1+O(\eps)\big)+O(|\bkappa-\bkappa^*|^2).
\end{equation*}
This Dirac point is formed by crossing of the tenth and eleventh band surfaces of the eigenvalue problem (see Figure \ref{fig:band_structure2}, left). 
In this scenario, there also exist a Dirac point 
in the $O(\eps^4)$ neighborhood of the singular frequency $\bar\omega_3(\bkappa^*)$ for subsystems \eqref{eq:subsys2} and \eqref{eq:subsys3} when $\bkappa=\bkappa^*$. This is shown in Figure \ref{fig:band_structure2} (right).  One can use the elements $a_{1,1}$, $a_{-2,1}$, $a_{1,-2}$, and $a_{-2,-2}$ in the matrix $\cA$ to set up the characteristic equation and obtain the leading-order term of the eigenvalue. Here we omit the very technical calculations for this Dirac point.

\section{Dirac points for the Neumann eigenvalue problem}
In this section, we consider the Dirac points for the Neumann eigenvalue problem \eqref{eq:eigen_prob}\eqref{eq:Neumann}. The asymptotic expansions of the Dirac points when $\bkappa=\bkappa^*$ are obtained by using the double-layer integral operator to derive  the characteristic equation. The key steps are parallel to the Dirichlet eigenvalue problem. Therefore, in what follows we describe the main procedure briefly and highlight the main difference from the Dirichlet problem.

First, applying the Green's identity, the Bloch mode $\psi$ for the eigenvalue problem \eqref{eq:eigen_prob}\eqref{eq:Neumann} can be represented by the double-layer potential
\begin{equation}\label{eq:double_layer} 
\psi(\bx)=-\int_{\by\in \partial D_\eps} \frac{\partial G(\bkappa,\omega;\bx-\by)}{\partial \nu(\by)} \psi(\by) \, ds_\by \quad \mbox{for} \; \bx \in   Y_\eps.
\end{equation}
Taking the limit when $\bx$ approaches the obstacle boundary and using the jump relation for the double-layer potential, we obtain the integral equation over $\partial D_\eps$:
\begin{equation*}
\frac{\psi(\bx)}{2} + \int_{\by\in \partial D_\eps} \frac{\partial G(\bkappa,\omega;\bx-\by)}{\partial \nu(\by)} \psi(\by) \, ds_\by = 0 \quad \mbox{for} \; \bx \in   \partial D_\eps.
\end{equation*}
By the change of variables, the above integral equation reads
\begin{equation}\label{eq:BIE_db}
    \left(\frac{1}{2} \cI + \eps \, \cK_\eps^{\bkappa,\omega} \right) \psi = 0.
\end{equation}
Here the double-layer integral operator $\cK_\eps^{\bkappa,\omega}$ is defined over $\partial D_1$ and takes the form
\begin{equation*}
    [\cK_\eps^{\bkappa,\omega}\psi](\bx):= \int_{\by\in \partial D_1} \Theta(\bkappa,\omega;\eps(\bx-\by)) \psi(\by) \, ds_\by, \quad \bx\in\partial D_1,
\end{equation*}
in which the kernel
\begin{equation*}
\Theta(\bkappa, \omega; \bx-\by):=\dfrac{\partial G(\bkappa,\omega;\bx-\by)}{\partial \nu(\by)}=\frac{1}{|Y|}\sum_{\bq\in \Lambda^*} -\frac{i (\bkappa+\bq) \cdot \nu(\by) \, e^{i(\bkappa+\bq)\cdot(\bx-\by)}}{\omega^2-|\bkappa+\bq|^2} \quad \mbox{for} \; \bx\neq\by.
\end{equation*}
The operator $\cK_\eps^{\bkappa,\omega}$ is bounded on $H^{1/2}(\partial D_1)$. 

To solve for the eigenpair $(\omega,\psi)$ of the integral equation \eqref{eq:BIE_db}, we expand $\psi(\br(t)) \in H^{1/2}([0,2\pi])$ as 
$\displaystyle{\psi(\br(t))=\sum_{n=-\infty}^\infty c_n \phi_n(t)}$ and define the matrix $\cA=[a_{m,n}]$, wherein
\begin{equation}\label{eq:amn_Neumann}
 a_{m,n}(\bkappa,\omega)= \frac{1}{2}\delta_{n,n} + \eps\, \left( \phi_m, \cK_\eps^{\bkappa,\omega}\phi_n \right). 
\end{equation}
Then the eigenvalue problem reduces to solving nonzero vector $\bc=\{c_n\}_{n\in\bbZ} \in \mathbb{H}^{1/2}$ for 
the linear system $\cA(\bkappa,\omega)\, \bc = \bzero$. The key steps for deriving the eigenvalues when $\bkappa=\bkappa^*$ are summarized as follows:

\begin{itemize}
    \item [(i)] \textit{Decomposition of the system:} First, the decomposition of the whole linear system $\cA(\bkappa,\omega)\, \bc = \bzero$ into the three subsystems in the form of \eqref{eq:subsys1}-\eqref{eq:subsys3} is based upon the important fact that $a_{m,n}\neq0$ only if $\bmod(m-n,3)=0$. This also holds true for the coefficients $a_{m,n}$ defined in \eqref{eq:amn_Neumann} because of the relation
    \begin{equation*}
    u_n(\bx) := \cK_\eps^{\bkappa^*,\omega}[\phi_n]
     = e^{\frac{i2n\pi}{3}} u_n(R\bx),
    \end{equation*}
    which follows from $\nabla G(\bkappa^*,\omega;\bx) = R^T \nabla G(\bkappa^*,\omega;R\bx)$ for $\bx\neq 0$ and $\nu(\by) =\by$ for $\by \in \partial D_1$. The proof can be carried out by repeating the lines in Lemma \ref{lem:amn}.
    \item[(ii)] \textit{Decomposition of the matrix $\cA$:}  The decomposition of $\cA$ follows from the decomposition of the
    integral operator $\cK_\eps^{\bkappa^*,\omega}$. 
    For $\omega$ near the singular frequency $\bar\omega_1(\bkappa^*)$, we decompose the kernel $\Theta(\bkappa, \omega; \bx-\by)$ into three parts as
    \begin{eqnarray*}
         \Theta(\bkappa^*, \omega; \bx-\by) &=& \Theta_{H_0}(\bx-\by) + \Theta_{\Lambda_0^*}( \bx-\by) + \tilde \Theta(\bx-\by) \\
         &=&
    \dfrac{\partial H_0(\omega;\bx-\by)}{\partial \nu(\by)} + \dfrac{\partial \GL(\bkappa^*,\omega;\bx-\by)}{\partial \nu(\by)} + \dfrac{\partial \tG(\bkappa^*,\omega;\bx-\by)}{\partial \nu(\by)},
    \end{eqnarray*}
    where $H_0$, $\GL$ and $\tG$ are given in \eqref{eq:GH}-\eqref{eq:tG}. Correspondingly, $\cK_\eps^{\bkappa^*,\omega}$ is decomposed as
    $\cK_\eps^{\bkappa^*,\omega}= \KH + \KL + \tK$,
    in which $\KH$, $\KL$ and $\tK$ are the integral operators with the kernel $\Theta_{H_0}(\eps(\bx-\by))$, $\Theta_{\Lambda_0^*}(\eps(\bx-\by))$, and $\tilde \Theta(\eps(\bx-\by))$ respectively. In addition, $\KH$ and $\tK$ are bounded operators on $H^{1/2}(\partial D_1)$ for $\omega$ near $\bar\omega_1(\bkappa^*)$. 
    
    A key observation is that $\Theta_{\Lambda_0^*}$ attains the expansion
    \begin{equation}
     \Theta_{\Lambda_0^*}(\eps (\bx-\by))=\frac{1}{|Y|}\frac{1}{\omega^2-|\bkappa^*|^2} \left( - \frac{\eps}{3}\left(\frac{2\pi}{a}\right)^2 |\bx-\by|^2 + O(\eps^2)\right),
\end{equation}
    which yields
    \begin{equation*}
   \left( \phi_m, \KL\phi_n \right) =
   \begin{cases}
   \dfrac{1}{|Y|}\dfrac{1}{\omega^2-|\bkappa^*|^2}\left(- \dfrac{4\pi\eps}{3}\left(\dfrac{2\pi}{a}\right)^2+O(\eps^2)\right) \quad &m=n=0, \\
   \dfrac{1}{|Y|}\dfrac{1}{\omega^2-|\bkappa^*|^2}
  \left(\dfrac{2\pi\eps}{3}\left(\dfrac{2\pi}{a}\right)^2+O(\eps^2)
  \right) \quad &m=n=\pm1\\
   \dfrac{1}{|Y|}\dfrac{1}{\omega^2-|\bkappa^*|^2}\, O\left(\eps^{\max(3,|m|,|n|)}\right) \quad &\text{otherwise}.
   \end{cases},
\end{equation*}
Therefore, recall that 
\begin{equation*}
     a_{m,n}(\bkappa,\omega)= \frac{1}{2}\delta_{n,n} + \eps\, \left( \phi_m, \KL\phi_n \right) + \eps\, \left( \phi_m, \KH\phi_n \right) + \eps\, \left( \phi_m, \tK\phi_n \right),
\end{equation*}
the matrix $\cA$ can be decomposed as $\cA=\cD+ \eps \, \cE$ near the singular frequency $\bar\omega_1(\bkappa^*)$, where
$\cD:=\diag(d_n)_{n\in\mathbb Z}$ with
\begin{equation*}
   d_n=
   \begin{cases}
   \dfrac{1}{2} +
  \dfrac{1}{|Y|}\dfrac{1}{\omega^2-|\bkappa^*|^2}\left(- \dfrac{4\pi\eps^2}{3}\left(\dfrac{2\pi}{a}\right)^2\right), & n=0, \\
    \dfrac{1}{2}+\dfrac{1}{|Y|}\dfrac{1}{\omega^2-|\bkappa^*|^2}\dfrac{2\pi\eps^2}{3}\left(\dfrac{2\pi}{a}\right)^2, & n=\pm1, \\
   \dfrac{1}{2}, & |n|>1,
   \end{cases}
\end{equation*}
and $\cE=[e_{m,n}]$ is bounded on $\mathbb{H}^{1/2}$.

    \item[(iii)] \textit{Characteristic equation:} We rewrite 
   each system in \eqref{eq:subsys1}-\eqref{eq:subsys3} as
\begin{equation}\label{eq:subsysem_compact2}
a_{j,j} \, c_j + \left\langle \hat\bc_j,\hat\ba_j \right\rangle \,  = 0, \quad  \hat\cA_j \hat\bc_j + c_j \hat\ba_j = \bzero, \quad j = 0, \pm 1,
\end{equation}
where the vectors $\hat\ba_j$, $\hat\bc_j$ and $\hat\cA_j$ are defined by \eqref{eq:hat_a}-\eqref{eq:hat_Aj}. Using the decomposition of the matrix $\cA$ in (ii), then $\hat\cA_j=\hat\cD_j+ \eps \, \hat\cE_j$ $\left(\mathbb{H}_{\bkappa^*,j}^{1/2} \to \mathbb{H}_{\bkappa^*,j}^{1/2} \right)$ is invertible. As such the system \eqref{eq:subsysem_compact2} reduces to the characteristic equation: 
\begin{equation*}
    a_{j,j}(\bkappa^*,\omega)  - \left\langle \hat\cA_j^{-1}\hat\ba_j(\bkappa^*,\omega),\hat\ba_j(\bkappa^*,\omega) \right\rangle = 0, \quad j = 0, 1, -1.
\end{equation*}
Explicitly, we obtain
\begin{equation}\label{eq:char_Neumann}
 \frac{1}{2}-\frac{2\alpha\eps^2}{\omega^2-|\bkappa^*|^2}+O(\eps) = 0
 \quad \mbox{and} \quad
 \frac{1}{2}+\frac{\alpha\eps^2}{\omega^2-|\bkappa^*|^2}+O(\eps) = 0
\end{equation}
for $j=0$ and $j=\pm1$ respectively, where $\alpha:=\frac{2\pi}{3|Y|}\left(\frac{2\pi}{a}\right)^2$. 

\end{itemize}

Solving the second equation in \eqref{eq:char_Neumann} for $j=\pm1$, we obtain the Dirac point near $\bar\omega_1(\bkappa^*):=|\bkappa^*|$:
\begin{theorem}
If $\bkappa=\bkappa^*$, the integral equation \eqref{eq:BIE_db} attains the characteristic value
$$ \omega_1^*  = \abs{\bkappa^*} -  \frac{\alpha}{\abs{\bkappa^*}} \cdot \eps^2 + O\left(\eps^3\right)$$ in the neighborhood of $\bar\omega_1(\bkappa^*)$.
The corresponding eigenspace $V^*:=\mbox{span}\left\{\psi_1(t),\psi_{-1}(t)\right\}$,
where $\psi_j(t) = e^{ijt} + O(\eps) \in H_{\bkappa^*,j}^{1/2}([0,2\pi])$ for $j=\pm1$.
\end{theorem}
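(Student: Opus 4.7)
The plan is to imitate the Dirichlet argument of Theorem 3.10, but working from the characteristic equations \eqref{eq:char_Neumann} that were already produced in steps (i)--(iii) of the outlined procedure for the Neumann operator. In particular, the reduction to scalar characteristic equations has already been accomplished, so what remains is (a) to prove existence of a root, (b) to extract its leading-order expansion, and (c) to read off the eigenfunctions through the $2\times 2$ block structure \eqref{eq:subsysem_compact2}.

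First I would fix a small neighborhood $\Omega_\eps(\bkappa^*)$ around $\bar\omega_1(\bkappa^*)=|\bkappa^*|$ of the form $\{\omega:\,c_1\eps^2\le |\omega^2-|\bkappa^*|^2|\le c_2\eps^2\}$ for suitably chosen constants $c_1, c_2>0$, which is analogous to the Dirichlet set-up in Section~\ref{sec:omega1} but adapted to the fact that, in the Neumann case, no logarithmic term is present in the diagonal entries. On this region $\hat\cD_j$ is uniformly invertible for $j=\pm 1$ (since the diagonal entries are $\frac{1}{2}+O(1/|n|)$ away from $j$, and bounded below), and the perturbation $\eps\hat\cE_j$ is small in operator norm, so the argument of Theorem~\ref{thm:A_inv} applies verbatim to show that $\hat\cA_j$ is invertible with $\|\hat\cA_j^{-1}\|$ uniformly bounded. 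Together with the estimates on $\hat\ba_j$ from Lemmas \ref{lem:est1}--\ref{lem:est3} (which have their exact analogues for the double-layer kernel), this implies that
\begin{equation*}
\langle \hat\cA_j^{-1}\hat\ba_j(\bkappa^*,\omega),\hat\ba_j(\bkappa^*,\omega)\rangle = O(\eps^2),
\end{equation*}
so that the characteristic equation for $j=\pm 1$ reduces, as stated in \eqref{eq:char_Neumann}, to
\begin{equation*}
 \tfrac{1}{2}+\frac{\alpha\eps^2}{\omega^2-|\bkappa^*|^2}+O(\eps) = 0.
\end{equation*}

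Second, I would solve this equation asymptotically. Writing $\omega^2-|\bkappa^*|^2 = -2\alpha\eps^2(1+O(\eps))$ and taking the positive square root gives
\begin{equation*}
\omega = |\bkappa^*|\sqrt{1-\tfrac{2\alpha\eps^2}{|\bkappa^*|^2}(1+O(\eps))} = |\bkappa^*|-\tfrac{\alpha}{|\bkappa^*|}\eps^2+O(\eps^3),
\end{equation*}
which is the claimed expansion $\omega_1^*$. The existence of an actual root (not merely a formal one) is then obtained by a Rouché-type argument identical to the one invoked in the proof of Theorem~\ref{thm:eigenvalue0}: the leading part $\frac{1}{2}+\frac{\alpha\eps^2}{\omega^2-|\bkappa^*|^2}$ has a simple zero inside the chosen contour in $\Omega_\eps(\bkappa^*)$, and the $O(\eps)$ remainder is uniformly smaller than the leading part on the boundary of that contour.

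Third, for the eigenspace I would invoke \eqref{eq:bcj}--\eqref{eq:cj} exactly as in the Dirichlet case: for each $j=\pm 1$, the solution $\bc_j$ of the reduced subsystem has $c_{\pm 1}$ of order $1$ while $\hat\bc_j = -c_j\,\hat\cA_j^{-1}\hat\ba_j = O(\eps)$, so the corresponding eigenfunction in parameter space is
\begin{equation*}
\psi_j(t) = c_j \phi_j(t) + \sum_{m\neq 0} c_{3m+j}\phi_{3m+j}(t) = e^{ijt} + O(\eps),
\end{equation*}
and by construction $\psi_j\in H^{1/2}_{\bkappa^*,j}([0,2\pi])$. Proposition~\ref{prop_repeated_eig}, whose proof only uses symmetries of $a_{m,n}$ that continue to hold in the Neumann setting, then gives that the characteristic value for $j=+1$ coincides with the one for $j=-1$, yielding the two-dimensional eigenspace $V^*=\operatorname{span}\{\psi_1,\psi_{-1}\}$. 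The first characteristic equation in \eqref{eq:char_Neumann} (corresponding to $j=0$) has no root in $\Omega_\eps(\bkappa^*)$ because its leading part is $\tfrac{1}{2}$ with a correction of the opposite sign and twice the magnitude, so any root lies outside the chosen neighborhood; this must be checked to conclude that the two-dimensional eigenspace $V^*$ is exactly the one produced by the $j=\pm 1$ subsystems.

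The main obstacle I expect is the bookkeeping of the $O(\eps)$ remainder in \eqref{eq:char_Neumann}: to get the $O(\eps^3)$ error in $\omega_1^*$ (rather than merely $O(\eps^2\ln\eps)$ or similar), one must verify that the off-diagonal contributions $\langle \hat\cA_j^{-1}\hat\ba_j,\hat\ba_j\rangle$ are actually of order $\eps^4/(\omega^2-|\bkappa^*|^2)=O(\eps^2)$ with the right structure, and that the subleading $O(\eps^2)$ terms in the expansions of $\Theta_{\Lambda_0^*}$ and $\tilde\Theta$ combine to give only an $O(\eps)$ perturbation of the leading $\frac{1}{2}+\alpha\eps^2/(\omega^2-|\bkappa^*|^2)$, so that the square-root expansion above actually yields the announced $O(\eps^3)$ error term.
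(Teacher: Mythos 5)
Your proposal is correct and follows essentially the same route as the paper: the paper's own argument for this theorem is precisely to take the characteristic equations \eqref{eq:char_Neumann} produced in steps (i)--(iii), solve the $j=\pm1$ equation asymptotically (with existence via Rouch\'e and the eigenvector read off through the analogue of \eqref{eq:bcj}--\eqref{eq:cj}, exactly as in Theorem \ref{thm:eigenvalue0}), and invoke the $j=1\leftrightarrow j=-1$ symmetry for the two-dimensional eigenspace. One small correction: the $j=0$ equation does have a root, at $\omega^2-|\bkappa^*|^2=4\alpha\eps^2\big(1+O(\eps)\big)$, i.e.\ on the other side of $\bar\omega_1(\bkappa^*)$; this does not harm your conclusion since that root differs from $\omega_1^*$ at order $\eps^2$, but your claim that it lies outside $\Omega_\eps(\bkappa^*)$ requires either a one-sided neighborhood ($\omega<|\bkappa^*|$) or $c_2<4\alpha$, rather than holding for any admissible $c_1,c_2$.
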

The Dirac point $(\bkappa^*, \omega_1^*)$ is formed by the crossing of the first two band surfaces shown in Figure \ref{fig:band_structure3}. Similar to the Dirichlet problem, the Neumann eigenvalue $\omega_1^*$ is also located in the $O(\eps^2)$ neighborhood of the singular frequency $\bar\omega_1(\bkappa^*)$, but it is smaller than $\bar\omega_1(\bkappa^*)$. One major difference of the two eigenvalue problems is that, for small $\eps$, the Dirichlet Dirac point $(\bkappa^*, \omega_1^*)$ is the extreme point  of the first two band surfaces (maximum and minimum respectively) and a band gap can be opened near $(\bkappa^*, \omega_1^*)$ if the honeycomb lattice is perturbed suitably (cf. Figure \ref{fig:band_structure}). This is not the case for the Neumann Dirac point and the obstacle needs to be large enough for $(\bkappa^*, \omega_1^*)$ to be an extreme point (cf. Figure \ref{fig:band_structure3}).
Finally, one can obtain the Dirac points for the higher frequency bands by decomposing the integral operator $\cK_\eps^{\bkappa^*,\omega}$ properly and repeating the above calculations.

\begin{figure}[!htbp]
    \centering
    \includegraphics[width=14cm]{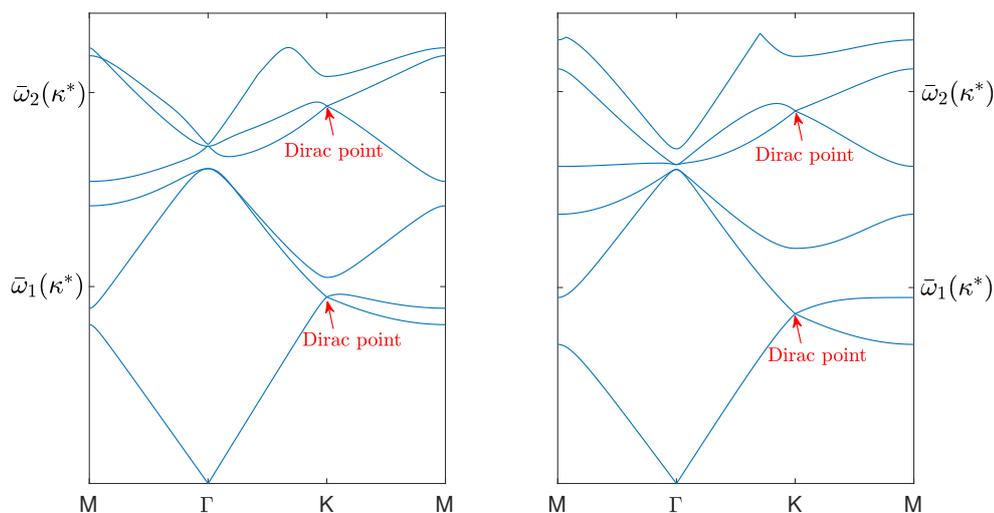}
    \caption{The dispersion curves along the segments $M\to\Gamma\to K \to M$ over the Brillouin zone when $\eps=0.1$ (left) and $\eps=0.2$ (right). The lattice constant $a=1$. }
    \label{fig:band_structure3} 
\end{figure}

\bibliography{references}

\begin{thebibliography}{99}






\bibitem{ammari-book}
{\sc H. Ammari,  B. Fitzpatrick, H. Kang, M. Ruiz, S. Yu, and H. Zhang},
{\em Mathematical and Computational Methods in Photonics and Phononics}, Mathematical Surveys and Monographs, Vol. 235, Amer. Math. Soc., Rhode Island, 2018. 



\bibitem{ammari-20-4}
{\sc H. Ammari, B Fitzpatrick, E. Hiltunen, H. Lee, and S. Yu}, {\em Honeycomb-lattice Minnaert bubbles}, 
SIAM J. Math. Anal., \textbf{52} (2020), 5441-5466.

\bibitem{ammari-kang-lee} 
{\sc H. Ammari, H. Kang, and H. Lee}, {\em Layer Potential Techniques in Spectral Analysis}, Mathematical Surveys and Monographs, Vol. 153, Amer. Math. Soc., Rhode Island, 2009. 















\bibitem{berkokaiko-comech}
{\sc G. Berkolaiko and A. Comech}, {\em Symmetry and Dirac points in graphene spectrum}, J. Spectr. Theor., \textbf{8} (2018), 1099-1147.

\bibitem{Cassier-Weinstein-21}
{\sc M. Cassier and M. Weinstein}, {\em
High contrast elliptic operators in honeycomb structures}, 2021,
arXiv:2103.16682.	

\bibitem{Fefferman-Weinsein-12}
{\sc C. Fefferman and M. Weinstein}, {\em Honeycomb lattice potentials and Dirac points}, J. Amer. Math. Soc., \textbf{25} (2012), 1169-1220.

\bibitem{Fefferman-Thorp-Weinsein-16}
{\sc C. Fefferman, J. Lee-Thorp, and M. Weinstein}, {\em Edge states in honeycomb structures}, Annals of PDE, \textbf{2} (2016), 1178-1270.

\bibitem{Fefferman-Thorp-Weinsein-18}
{\sc C. Fefferman, J. Lee-Thorp, and M. Weinstein}, {\em Honeycomb Schr\"{o}dinger operators in the strong binding regime}, Commun. Pur. Appl. Math., \textbf{6} (2018), 1178-1270.
 
\bibitem{Fefferman-Lee-Thorp-Weinsein-17}
{\sc C. Fefferman, J. Lee-Thorp, and M. Weinstein}, {\em Topologically protected states in one-dimensional systems}, \textbf{247}, American Mathematical Society, 2017.

\bibitem{grushin-09}
{\sc V. Grushin}, {\em Multiparameter perturbation theory of Fredholm operators applied to Bloch functions}, Math. Notes, \textbf{86} (2009), 767-774.

\bibitem{HR-08}
{\sc F. Haldane and S. Raghu}, {\em Possible realization of directional optical waveguides in photonic crystals with broken time-reversal symmetry}, Phys. Rev. Lett., \textbf{100} (2008), 013904.


\bibitem{hsiao-wendland}
{\sc G. Hsiao and W. L. Wendland}, {\em Boundary Integral Equations}, Applied Mathematical Sciences, Vol. 164, Springer Berlin Heidelberg, 2008.

\bibitem{Khanikaev-12}
{\sc A. Khanikaev, et al.}, {\em Photonic topological insulators}, Nat. Mater., \textbf{12} (2013), 233-239.

\bibitem{kress}
{\sc R. Kress}, {\em Linear Integral Equations} (3rd edition), Applied Mathematical Sciences, Vol. 82, Springer.


\bibitem{Kuchment-Post-07}
{\sc P. Kuchment and O. Post}, {\em On the spectra of carbon nano-structures}, Comm. Math. Phys., \textbf{275} (2007), 805-826.

\bibitem{Lee-16}
{\sc M. Lee}, {\em Dirac cones for point scatterers on a honeycomb lattice}, SIAM J. Math. Anal., \textbf{48}
(2016), 1459–1488.

\bibitem{Lee-Thorp-Weinstein-Zhu-19}
{\sc J. P. Lee-Thorp, M. I. Weinstein, and Y. Zhu}, {\em Elliptic operators with honeycomb symmetry: Dirac Points, edge states and applications to photonic graphene}, Arch. Ration. Mech. An., \textbf{232} (2019), 1–63.


\bibitem{LZ21-2}
{\sc J. Lin and H. Zhang}, {\em
Mathematical theory for topological photonic materials in one dimension}, 2021. arXiv:2101.05966v2.


\bibitem{Lu-14}
{\sc L. Lu,  J. Joannopoulos, and Marin Soljacic},  {\em Topological photonics}, Nat. Photonics, \textbf{8} (2014), 821-829.

\bibitem{ma-shvets}
{\sc T. Ma and G. Shvets}, {\em All-Si valley-Hall photonic topological insulator}, New J. Phys., \textbf{18} (2016), 025012.

\bibitem{makwana-19}
{\sc M. Makwana, R. Craster, and S. Guenneau}, {\em Topological beam-splitting in photonic crystals}, Opt. Express, \textbf{27} (2019), 16088-16102.

\bibitem{ochiai-09}
{\sc T. Ochiai and M. Onoda}, {\em Photonic analog of graphene model and its extension: Dirac cone, symmetry, and edge states}, Phys. Rev. B, \textbf{80} (2009), 155103.

\bibitem{Ozawa-19}
{\sc T. Ozawa, et al.} {\em Topological photonics}, Rev. Mod. Phys., \textbf{91} (2019), 015006.

\bibitem{raghu-08}
{\sc S. Raghu and F. D. M. Haldane}, {\em Analogs of quantum-Hall-effect edge states in photonic crystals},  Phys. Rev. A, \textbf{78} (2008), 033834.

\bibitem {Rechtsman-13}
{\sc M. Rechtsman, et al.}, {\em Photonic Floquet topological insulators}, Nature, \textbf{496} (2013), 196-200.

\bibitem{slonczewski-weiss-58}
{\sc J. C. Slonczewski and P. R. Weiss}, {\em Band structure of graphite}, Phys. Rev., \textbf{109} (1958), 272-279.

\bibitem{torrent-12}
{\sc D. Torrent and J. S\'{a}nchez-Dehesa}, {\em Acoustic analogue of graphene: observation of Dirac cones in acoustic surface waves}, Phys. Rev. Lett., \textbf{108} (2012), 174301.

\bibitem{wallace-47}
{\sc P. Wallace}, {\em The band theory of graphite}, Phys. Rev., \textbf{71} (1947), 622–634.

\bibitem{wang-08}
{\sc Z. Wang, Y. D. Chong, J. D. Joannopoulos, and M. Solja\v{c}i\'{c}}, {\em Reflection-free one-way edge modes in a gyromagnetic photonic crystal}, Phys. Rev. Lett., \textbf{100} (2008), 013905.

\bibitem{wu-hu-15}
{\sc L. Wu and X. Hu}, {\em Scheme for achieving a topological photonic crystal by using dielectric material}, Phys. Rev. Lett., \textbf{114} (2015), 223901.




 



\bibitem{Yang-15}
{\sc Z. Yang, et al.} {\em Topological acoustics}, Phys. Rev. Lett., \textbf{114} (2015), 114301.

\end{thebibliography}

\end{document}